\documentclass[11pt, a4paper, abstract=on, bibliography=totoc]{scrartcl}

\usepackage[T1]{fontenc}
\usepackage[utf8]{inputenc}
\usepackage[english]{babel}

\usepackage{amsmath}
\usepackage{amssymb}
\usepackage{amsthm}

\usepackage{dsfont}
\usepackage{booktabs}

\usepackage{tikz}
\usetikzlibrary{arrows.meta, calc}
\usepackage{tikz-cd}

\usepackage{xcolor}
\definecolor{linknavy}{rgb}{0.0,0.0,0.55}

\usepackage{hyperref}
\hypersetup{
    colorlinks = true,
    linkcolor  = linknavy,
    citecolor  = linknavy,
    urlcolor   = linknavy,
    pdfpagemode = UseOutlines,
    pdfencoding = unicode,
    pdftitle   = {Sample-Smooth Spaces: A Convenient Category for
                  Differentiable Probabilistic Programming},
    pdfauthor  = {Patrick Forre},
    pdfsubject = {Category theory, probability monads, differentiable
                  probabilistic programming},
    pdfkeywords = {sample-smooth spaces, quasi-universal spaces, concrete site,
                  quasitopos, cohesion, probability monad, Markov category,
                  reparametrisation gradient, differentiable programming,
                  probabilistic programming}}

\usepackage{aliascnt}
\usepackage{microtype}
\usepackage{cleveref}

\theoremstyle{plain}

\newaliascnt{Thm}{mycounter}
\newtheorem{Thm}[Thm]{Theorem}
\aliascntresetthe{Thm}
\newaliascnt{Lem}{mycounter}
\newtheorem{Lem}[Lem]{Lemma}
\aliascntresetthe{Lem}
\newaliascnt{Prp}{mycounter}
\newtheorem{Prp}[Prp]{Proposition}
\aliascntresetthe{Prp}
\newaliascnt{Cor}{mycounter}
\newtheorem{Cor}[Cor]{Corollary}
\aliascntresetthe{Cor}

\theoremstyle{definition}
\newaliascnt{Def}{mycounter}
\newtheorem{Def}[Def]{Definition}
\aliascntresetthe{Def}
\newaliascnt{Not}{mycounter}
\newtheorem{Not}[Not]{Notation}
\aliascntresetthe{Not}

\theoremstyle{remark}
\newaliascnt{Rem}{mycounter}
\newtheorem{Rem}[Rem]{Remark}
\aliascntresetthe{Rem}
\newaliascnt{Eg}{mycounter}
\newtheorem{Eg}[Eg]{Example}
\aliascntresetthe{Eg}

\crefname{Thm}{Theorem}{Theorems}
\Crefname{Thm}{Theorem}{Theorems}
\crefname{Lem}{Lemma}{Lemmas}
\Crefname{Lem}{Lemma}{Lemmas}
\crefname{Prp}{Proposition}{Propositions}
\Crefname{Prp}{Proposition}{Propositions}
\crefname{Cor}{Corollary}{Corollaries}
\Crefname{Cor}{Corollary}{Corollaries}
\crefname{Def}{Definition}{Definitions}
\Crefname{Def}{Definition}{Definitions}
\crefname{Not}{Notation}{Notations}
\Crefname{Not}{Notation}{Notations}
\crefname{Rem}{Remark}{Remarks}
\Crefname{Rem}{Remark}{Remarks}
\crefname{Eg}{Example}{Examples}
\Crefname{Eg}{Example}{Examples}

\newcommand{\N}{\mathbb{N}}
\newcommand{\R}{\mathbb{R}}
\newcommand{\Q}{\mathbb{Q}}
\newcommand{\id}{\mathrm{id}}
\newcommand{\Acal}{\mathcal{A}}
\newcommand{\Bcal}{\mathcal{B}}
\newcommand{\Ccal}{\mathcal{C}}
\newcommand{\Ecal}{\mathcal{E}}
\newcommand{\Ncal}{\mathcal{N}}
\newcommand{\Nrm}{\mathrm{N}}
\newcommand{\Fnorm}{F_{\mathrm{N}}}
\newcommand{\Pcal}{\mathcal{P}}
\newcommand{\lA}{\left\langle}
\newcommand{\rA}{\right\rangle}
\newcommand{\lV}{\left\| }
\newcommand{\rV}{\right\| }
\newcommand{\lI}{\left| }
\newcommand{\rI}{\right| }
\newcommand{\Scal}{\mathcal{S}}
\newcommand{\Tcal}{\mathcal{T}}
\newcommand{\Wcal}{\mathcal{W}}
\newcommand{\Xcal}{\mathcal{X}}
\newcommand{\Ycal}{\mathcal{Y}}
\newcommand{\Zcal}{\mathcal{Z}}

\newcommand{\PrPf}{\mathsf{P}}
\newcommand{\PrPfO}{\PrPf_\Omega}
\newcommand{\uc}{\mathsf{u}}
\newcommand{\Mbb}{\mathbb{M}}

\newcommand{\ihom}{{\mathcal{SSS}}}

\newcommand{\cur}{\mathrm{cur}}
\newcommand{\unc}{\mathrm{unc}}

\newcommand{\ubar}[1]{\text{\b{$#1$}}}
\newcommand{\twob}{{\ubar{\mathbf{2}}}}
\newcommand{\twoc}{\breve{\mathbf{2}}}
\newcommand{\twop}{{\bar{\mathbf{2}}}}
\newcommand{\one}{\mathbf{1}}
\newcommand{\zero}{\mathbf{0}}

\newcommand{\sm}{\setminus}
\newcommand{\ins}{\subseteq}
\newcommand{\sni}{\supseteq}

\newcommand{\pr}{\mathrm{pr}}
\newcommand{\ev}{\mathrm{ev}}
\newcommand{\Kleisli}{\mathrm{Kl}}

\newcommand{\inj}{\hookrightarrow}
\newcommand{\bij}{\stackrel{\sim}{\longrightarrow}}

\newcommand{\E}{\mathbb{E}}
\newcommand{\I}{\mathds{1}}

\newcommand{\QUS}{\mathbf{QUS}}
\newcommand{\Und}{\natural}
\newcommand{\SSS}{\mathbf{SSS}}
\newcommand{\Diff}{\mathbf{Diff}}
\newcommand{\Meas}{\mathbf{Meas}}
\newcommand{\Sets}{\mathbf{Sets}}

\newcommand{\lp}{\left ( }
\newcommand{\rp}{\right ) }
\newcommand{\lb}{\left \langle }
\newcommand{\rb}{\right \rangle }
\newcommand{\lB}{\left [ }
\newcommand{\rB}{\right ] }
\newcommand{\lC}{\left \{ }
\newcommand{\rC}{\right \} }
\newcommand{\st}{\,\middle|\,}

\providecommand{\doi}[1]{\href{https://doi.org/#1}{\textsf{doi:#1}}}
\providecommand{\eprintlink}[1]{\href{https://arxiv.org/abs/#1}{\textsf{arXiv:#1}}}

\newcommand{\Site}{\mathbb{S}}
\newcommand{\Cmix}{\Ccal^\infty_{\mathrm{mix}}}
\newcommand{\pt}{\ast}
\newcommand{\CPsh}{\mathbf{cPsh}}
\newcommand{\Law}{\mathsf{L}}

\begin{document}

\begin{titlepage}

\title{Sample-Smooth Spaces}
\subtitle{\Large A Convenient Category for\\Differentiable Probabilistic Programming}
\author{Patrick Forr\'e\\[6pt]
    {\small AI4Science Lab}\\
    {\small Korteweg-de Vries Institute for Mathematics}\\
    {\small University of Amsterdam}\\
    {\small \texttt{p.d.forre@uva.nl}}}
\date{}
\maketitle
\thispagestyle{empty}

\begin{abstract}
We introduce the category $\SSS$ of \emph{sample-smooth spaces} over a mixed site. The test objects
are the products $\Omega_n := \R^n \times \Omega$ of a Cartesian space with the universal Hilbert
cube $\Omega$ carrying all universally measurable sets, and a space is a set with a family of
admissible \emph{plots} $\Omega_n \to \Xcal$ closed under precomposition. Smoothness and
measurability are then not two structures glued along an axiom, but one structure indexed by one
site. The site has finite non-empty products, because $\Omega$ absorbs its own square; its Karoubi
envelope already contains every $\R^n$; and it has \emph{mixed} morphisms
$\omega \mapsto (W(\omega),\Phi(\omega))$, which turn measurability of a smooth family from an
axiom into a consequence.

$\SSS$ is a concrete quasitopos: complete, cocomplete, cartesian closed and locally cartesian
closed, with a classifier for embeddings. Morphisms of Cartesian spaces are exactly the $C^\infty$
maps and manifolds embed full and faithfully, both without Boman's theorem. Every object has tangent
and cotangent spaces and every morphism a differential. The modalities sit in an adjoint string
$\Pi \dashv \flat \dashv \Und \dashv \sharp \dashv \Lambda$, making $\SSS$ cohesive over
$\QUS$.

The point is the probability monad. Defining the plots of $\PrPf(\Xcal)$ as push-forwards of
$\Xcal$-plots at \emph{every} test object, $\PrPf$ is an unconditional strong commutative affine
monad on all of $\SSS$ --- functor, unit, the product of kernels, the multiplication and the monad
laws are each one line of seed splitting --- and its Kleisli category, the category of differentiable
simulators, is a Markov category. The reparametrisation trick holds by construction: every Kleisli
morphism is a sampler plot-wise, stably under composition, which is what the closing section
differentiates. A reflection theorem locates the whole gain in a single plot family.
\end{abstract}

\emph{2020 MSC:} \emph{Primary}: 18B25, 18C20, 60A05; \emph{Secondary}: 18C15, 18F20, 18M05,
58A40, 28A05, 68Q55, 68N18, 03B38.

\emph{Keywords:} sample-smooth spaces, quasi-universal spaces, quasi-Borel spaces, diffeological
spaces, concrete site, concrete presheaf, quasitopos, cohesion, probability monad, Markov category,
reparametrisation gradient, differentiable programming, probabilistic programming.

\DeclareTOCStyleEntries[numwidth=2.9em]{tocline}{subsection}
\DeclareTOCStyleEntries[numwidth=3.6em]{tocline}{subsubsection}
{
\tableofcontents
}
\end{titlepage}

\section{Introduction}
\label{sec:intro}

A differentiable probabilistic program manipulates two kinds of value at once. Some are
differentiated --- parameters, losses, the output of a network --- and some are sampled --- latent
variables, noise, observations. A semantics for such programs therefore wants a category that is
\emph{convenient} in the sense of \cite{Ste67}: cartesian closed, with enough limits and colimits to
interpret the type formers; \emph{correct}, in that its morphisms between Cartesian spaces are the
$C^\infty$ maps and finite-dimensional manifolds embed; and \emph{probabilistic}, in that it carries
a probability monad whose Kleisli category is a Markov category in the sense of \cite{Fri20}.

\subsection*{What such a category is for}

The reason to want one category rather than two is that in a differentiable probabilistic language
the two kinds of value are not kept apart by a phase distinction. A sampler's parameters are
differentiated, a loss is an expectation over samples, and a higher-order program may take a sampler
as an argument and return another one. A category with the three properties above supplies, in a
single place, most of what a denotational semantics of such a language needs. Cartesian closure
interprets function types, so that samplers are first-class values and the higher-order combinators
of a practical system --- an optimiser that consumes a model, a variational family parametrised by a
network, a transformation sending a program to a batched version of itself --- denote morphisms
rather than remaining syntax. Local cartesian closure supplies the dependent products,
\Cref{sss:prp:internal-hom}, and finite limits the dependent sums, \Cref{sss:prp:fibre}: the
structure a model of dependent type theory is built from, once the usual coherence for substitution
is arranged. Completeness and cocompleteness interpret products and sums, with the proviso of
\Cref{sss:rem:no-jump} that a plot into a coproduct stays in one summand, so that a randomly chosen
branch is typed by a modality rather than by a sum, \Cref{ml:sec:nonsmooth}. The classifier of
\Cref{sss:prp:classifier} interprets a predicate on a type, in the shape of the subsets carrying the
subspace structure; a quasitopos differs from a topos precisely in that an injection may present its
domain with a \emph{finer} structure, fewer admissible plots than it inherits, and those
monomorphisms are not classified, \Cref{sss:cor:not-topos}. The monad is the language's
\textsf{sample}, \textsf{return} and \textsf{bind}; its Kleisli category is the category of
programs-with-randomness, and the Markov structure records two of the equational laws a compiler
appeals to when it rewrites one --- an unused sample may be discarded (affineness), independent
samples may be drawn in either order (commutativity). Finally, every object has tangent and
cotangent spaces and every morphism a differential, with no hypothesis on either,
\Cref{sss:sec:tangent}, and on Cartesian spaces and manifolds these are the classical ones; so
differentiation too is an operation on morphisms rather than a syntactic transformation defined on a
well-behaved fragment.

What the list does not contain is worth saying at once. Expectation is not a morphism: the map
$\mu\mapsto\int h\,d\mu$ fails to be sample-smooth already for $h = \cos$ on the real line,
\Cref{prob:thm:expectation}, so the loss of a variational program is not itself interpreted by the
structure that interprets the program. Conditioning is not addressed either; \Cref{prob:thm:markov}
gives a Markov category and nothing beyond it, and conditionals are left to a sequel,
\Cref{prob:rem:conditionals}. In general the tangent space is a cone and not a vector space,
\Cref{tan:prp:cotangent}. These are stated where they arise, and collected in
\Cref{disc:sec:status}.

For a reader coming from machine learning the shortest description is this. A Kleisli morphism
$\R^m\to\PrPf(\Ycal)$ \emph{is} a differentiable sampler: a program consuming a parameter vector
and a stream of randomness and returning a value, taken up to equality of law at each parameter
value, \Cref{prob:thm:representation,prob:cor:sampler-bijection}. On this reading the
reparametrisation trick is not a technique that happens to apply to certain distributions; it is the
definition of the morphisms. And because the seed is split rather than re-derived, it survives
composition: a stochastic computation graph is a diagram in this category, and the composite of two
differentiable samplers is a third one that gets written down rather than argued for,
\Cref{ml:eg:composite}. What does not survive composition is the integrability hypothesis on the
sampler under which one may differentiate under the integral sign: it is the one assumption in the
paper that composition does not preserve, it is needed for the pathwise gradient of
\Cref{ml:thm:pathwise}, and \Cref{ml:eg:composite} shows it failing for a composite of two stages
that each satisfy it.

\subsection*{The problem}

Each half is settled on its own. On the measurable side the quasi-Borel spaces of \cite{HKSY17} and
the quasi-universal spaces of \cite{For21} give a quasitopos with a push-forward probability monad;
on the smooth side the diffeological spaces of \cite{Sou80,IZ13} give a quasitopos containing
manifolds, with the Fr\"olicher variants and their comparison in \cite{BKW25}. The difficulty is
the combination, and it is sharper than one expects.

The obvious way to combine them is to carry both families of admissible maps. An object is a set
$\Xcal$ together with admissible random variables $\Omega\to\Xcal$ out of a sample space and
admissible smooth curves $\R\to\Xcal$ out of the line, subject to one compatibility axiom, that a
smooth curve read at a measurable time is an admissible random variable. Everything one wants of the
base category follows. One then puts $\PrPf(\Xcal) := \lC X_*P\rC$ and calls a curve of measures
smooth when it is \emph{reparametrisable}, $\mu_t = (X_t)_*U$ for a smooth family $X$ of random
variables. Functor, unit, the product of measures and the strength are all routine.

The multiplication is not. To say that $\Mbb(\Gamma) := \int\nu\,\Gamma(d\nu)$ is a morphism one must show
that for a family $\Ncal$ of measures, indexed by a smooth parameter $t$ and a seed $\omega$, the
mixture $t\mapsto\int_\Omega\Ncal(t,\omega)\,U(d\omega)$ is again reparametrisable. What one is
given about $\Ncal$ is
\begin{enumerate}
    \item[(a)] for each $\omega$, the curve $t\mapsto\Ncal(t,\omega)$ is reparametrisable, and
    \item[(b)] for each measurable substitution $\omega\mapsto\lp W(\omega),\Phi(\omega)\rp$, the
        map $\omega\mapsto\Ncal\lp W(\omega),\Phi(\omega)\rp$ is an admissible random measure.
\end{enumerate}
What one needs is
\begin{enumerate}
    \item[(c)] a single $Y$, smooth in $t$ and admissible in the seed, with
        $\Ncal(t,\omega) = \lp Y(t,\omega,-)\rp_*U$,
\end{enumerate}
and (a) supplies a representative for each $\omega$
separately, with no coherence between them. Producing one representative from all of them is a
measurable selection problem: choose $Y^\omega$ measurably in $\omega$ and smoothly in $t$ at once.
Selections of Jankov--von Neumann type return universally measurable functions, which the sample
space here admits, but they give no control in the $t$-direction, which is the whole difficulty; the
problem is open.

\subsection*{The diagnosis, and the fix}

The obstruction is not a defect of the sample space and not a defect of the smooth side. It is that
in a presentation by two families the \emph{mixed} data --- families that are smooth in a parameter
and admissible in a seed at the same time --- is not primitive. Given $\Xcal^\Omega$ and the curves,
the mixed families are \emph{forced} to be the internal hom, which is exactly (a) and (b); whereas
the probability monad needs them to be push-forward images, which is the existence of $Y$. The two
notions need not agree, and the multiplication is available only where they do.

So we make the mixed data primitive. The test objects of this paper are the products
\begin{align}
    \Omega_n &:= \R^n \times \Omega, \qquad n \ge 0,
\end{align}
of a Cartesian space with the sample space, and a \emph{sample-smooth space} is a set together with,
for each $n$, a family of admissible \emph{plots} $\Omega_n\to\Xcal$, closed under precomposition.
Three properties of this site make it work, and they are the content of \Cref{sec:site}. It has
finite non-empty products, because $\Omega$ absorbs its own square. Its Karoubi envelope already contains every
$\R^n$ and the one-point space, so nothing is lost by leaving purely smooth test objects out. And it
has \emph{mixed} morphisms, in which the smooth coordinate is read off the seed; these have no
counterpart when the two directions are kept apart, and they are what turns the compatibility axiom
above from an axiom into a consequence, \Cref{sss:lem:Q4}.

The definition of $\PrPf(\Xcal)$ is then the one the monad wants: its plots at \emph{every} test
object are the push-forwards of $\Xcal$-plots, \Cref{prob:def:PrPf}. The selection problem does not
arise, because nothing has to be selected.

\subsection*{What is gained, and what it costs}

$\SSS$ is a concrete quasitopos --- complete, cocomplete, cartesian closed and locally cartesian
closed, with an explicit classifier for embeddings, \Cref{sss:thm:quasitopos}. Morphisms between
Cartesian spaces are exactly the $C^\infty$ maps and manifolds embed full and faithfully,
\Cref{sss:thm:euclidean,sss:thm:manifolds}; both are immediate, because the site contains
a plot that is the identity in disguise, so Boman's theorem \cite{Bom67} disappears from the
foundations. Every object has tangent and cotangent spaces, and on Cartesian spaces and manifolds the tangent
bundle, the differential and the Jacobian are the classical ones, \Cref{sss:sec:tangent}. The rigid and loose modalities sit in an
adjoint string $\Pi \dashv \flat \dashv \Und \dashv \sharp \dashv \Lambda$ making $\SSS$ cohesive
over $\QUS$ in the sense of \cite{Law07}, \Cref{modal:thm:string}.

The two results the paper is written around are in \Cref{sec:prob}, and neither carries a hypothesis
on the objects:
\begin{center}
\nopagebreak
\renewcommand{\arraystretch}{1.3}
\begin{tabular}{@{}l p{0.68\textwidth}@{}}
    \Cref{prob:thm:monad} & $\PrPf$ is a strong commutative affine monad on \emph{all} of $\SSS$ \\
    \Cref{prob:thm:markov} & its Kleisli category is a Markov category, on \emph{all} of $\SSS$
\end{tabular}
\end{center}
Every piece of the structure --- functor, unit, the product of kernels dependent as well as
independent, the strength, the multiplication, the monad laws --- is one line of seed splitting.
Alongside them, \Cref{prob:thm:representation,prob:cor:sampler-bijection} identify the
Kleisli morphisms out of $\R^m$ with samplers modulo equality in law, so that the reparametrisation
trick of the machine-learning literature holds by construction and is stable under composition; that
is what \Cref{sec:ml} then differentiates.

The price is stated plainly in \Cref{sss:rem:curves-do-not-suffice,prob:rem:comparison}. An object of $\SSS$ is not \emph{presented} by its curves and its
random variables: the mixed plot family is separate data, constrained only by the inclusion
$\Xcal^{\Omega_n}\ins\widehat\Xcal^{\Omega_n}$ into the largest compatible choice. That is precisely
the room in which $\PrPf$ is defined --- and whether the inclusion is ever strict is itself open,
\Cref{sss:rem:curves-do-not-suffice}. It is also why the difficulty above is relocated rather than
dissolved --- it becomes the question of whether the plots of
$\PrPf(\Xcal)$ exhaust the ones the two families would force, and nothing in the paper depends on
the answer.

\subsection*{Related work}

The measurable half is \cite{HKSY17,For21}, the smooth half \cite{Sou80,IZ13,BKW25}, and the
machinery of concrete sheaves on a concrete site \cite{BH11,Dub79,MMS22}. The closest point of
contact is \cite{MMS22}: when it combines two concrete sites it does so by a \emph{sum}, whose
objects carry two independent plot families related only by constants --- the shape diagnosed above
--- whereas the site here is a \emph{product}. \Cref{disc:sec:related} is the full discussion,
including the semantics literature on differentiable and probabilistic programming.

\subsection*{Reader's guide}

\Cref{sec:qus} fixes the sample space and recalls what is used from \cite{For21}; a reader of that
paper can start at \Cref{sec:site}. \Cref{sec:site} is the site and is short. \Cref{sec:sss} is the
category and its constructions; a reader who wants only the monad needs
\Cref{sss:sec:def}, \Cref{sss:thm:cartesian-closed,sss:prp:prod-coprod} from it.
Two subsections of \Cref{sec:sss} stand apart: \Cref{sss:prp:reflection} names the comparison with
a presentation by $n$-parameter families and random variables, and is used in
\Cref{sec:discussion}; \Cref{sss:sec:tangent} is the
differential calculus and is used nowhere else, \Cref{ml:rem:not-a-morphism} explaining why.
\Cref{sec:modal} is the adjoint string; nothing in \Cref{sec:prob} depends on it, and it is used
only in \Cref{ml:sec:nonsmooth}. \Cref{sec:prob} is the heart of the paper, and from
\Cref{sec:sss} it uses only products and coproducts, cartesian closedness and the function-space
formula \Cref{eq:function-space}. \Cref{sec:ml} is an application and \Cref{sec:discussion} takes
stock.

\subsection*{Notation}

Beyond the standard, the symbols of \Cref{tab:notation} recur.

\begin{table}[htbp]
\centering
\renewcommand{\arraystretch}{1.25}
\begin{tabular}{@{}l p{0.60\textwidth}@{}}
    \toprule
    $\Omega,\ \Bcal_\Omega,\ \Bcal_\otimes,\ U,\ \PrPfO$ & the sample space, its universally
        measurable sets, its Borel sets, the uniform measure, all probability measures on it,
        \Cref{qus:def:sample-space} \\
    $\Theta = (\Theta_1,\Theta_2)$ & the seed-splitting isomorphism
        $\Omega\bij\Omega\times\Omega$, ($\Omega$2) of
        \Cref{qus:thm:omega-properties} \\
    $\Omega^\Omega$ & the monoid of universally measurable self-maps of $\Omega$,
        \Cref{qus:def:sample-space} \\
    $\Omega_n,\ \Site,\ \Site_\pt$ & the test objects, the site and its enlargement,
        \Cref{site:def:site,site:def:site-plus} \\
    $\Cmix(\Omega_m,\R^n)$ & the mixed-smooth maps, (M1) and (M2), \Cref{site:def:site} \\
    $\pi_n,\ \iota^n_{t_0},\ \lb W,\Phi\rb,\ \Xi^{(n)},\ \varphi^{(n)}$ & the distinguished
        substitutions, \Cref{site:not:morphisms} \\
    $\Xcal^{\Omega_n},\ \Xcal^\Omega,\ \Xcal^{\R^n},\ \Bcal_\Xcal$ & plots, random variables,
        $n$-parameter families, induced $\sigma$-algebra, \Cref{sss:not:derived} \\
    $\Bcal^\circ_{\R^n}$ & the \emph{Borel} $\sigma$-algebra of $\R^n$, as against the induced
        $\Bcal_{\R^n}$, \Cref{sss:eg:euclidean} \\
    $\one,\ \zero,\ \twoc,\ \twob,\ \twop$ & the point, the empty space and the three two-point
        spaces, \Cref{sss:eg:euclidean,sss:eg:two} \\
    $\ihom_\Zcal(f,g)$ & the internal hom over a base, \Cref{sss:prp:internal-hom} \\
    $\widehat\Xcal,\ \SSS_{\mathrm{pr}}$ & the maximal extension and the presented objects,
        \Cref{sss:rem:curves-do-not-suffice,sss:prp:reflection} \\
    $\Pi,\ \flat,\ \Und,\ \sharp,\ \Lambda$ & the adjoint string over $\QUS$,
        \Cref{modal:thm:string} \\
    $\PrPf,\ \Law,\ \delta,\ \Mbb,\ \otimes,\ \rho,\ \tau$ & the probability monad, the law of
        a plot, unit, multiplication, product of kernels, costrength, strength, \Cref{sec:prob} \\
    $\Kleisli(\PrPf)$ & the Kleisli category, i.e.\ the category of differentiable simulators,
        \Cref{prob:def:kleisli} \\
    $T\Xcal,\ T^*_x\Xcal,\ df_x$ & tangent bundle, cotangent space, differential,
        \Cref{sss:sec:tangent} \\
    \bottomrule
\end{tabular}
\caption{Recurring notation.}
\label{tab:notation}
\end{table}

\section{Quasi-Universal Spaces: the Measurable Half}
\label{sec:qus}

This section fixes the sample space and recalls, without proof, what we use from \cite{For21}. A
reader of that paper can skip to \Cref{sec:site}; the only item that is not verbatim is
\Cref{qus:lem:caratheodory}, which is classical but is used constantly below. Each quoted statement
carries a pointer to the section of \cite{For21} it comes from; those pointers refer to the revised
version \texttt{arXiv:2109.11631v2}, whose numbering differs from that of the first version.

\subsection{The Sample Space}
\label{qus:sec:sample}

\begin{Def}[The sample space: the universal Hilbert cube]
    \label{qus:def:sample-space}
    Throughout, the sample space is the \emph{universal Hilbert cube} of \cite[\S7]{For21}:
    \begin{align}
        \Omega &:= [0,1]^\N, &
        \Bcal_\Omega &:= \lp \textstyle\bigotimes_{n \in \N} \Bcal_{[0,1]}\rp_\uc, \\
        \Omega^\Omega &:= \Meas\lp(\Omega,\Bcal_\Omega),(\Omega,\Bcal_\Omega)\rp, &
        \PrPfO &:= \Pcal(\Omega,\Bcal_\Omega),
    \end{align}
    where $\Bcal_\otimes := \bigotimes_{n\in\N}\Bcal_{[0,1]}$ is the Borel $\sigma$-algebra of the
    cube and $(\cdot)_\uc$ is universal completion, so that $\Bcal_\Omega$ consists of \emph{all}
    universally measurable subsets, $\Omega^\Omega$ is the monoid of all universally measurable
    self-maps, and $\PrPfO$ is the set of all probability measures on $\Bcal_\Omega$. We write
    $U := \bigotimes_{n\in\N}U[0,1]$ for the uniform measure, and $\Pcal(E,\Bcal_E)$ for the set
    of all probability measures on an ordinary measurable space $(E,\Bcal_E)$.
\end{Def}

\begin{Thm}[The properties of $\Omega$ that get used, {\cite[\S\S5--7]{For21}}]
    \label{qus:thm:omega-properties}
    The sample space of \Cref{qus:def:sample-space} has the following properties.
    \begin{enumerate}
        \item[($\Omega$1)] $\Omega^\Omega$ is a monoid under composition containing $\id_\Omega$ and
            all constant maps.
        \item[($\Omega$2)] \emph{Product isomorphism.} The zip-locker map
            $\Theta = (\Theta_1,\Theta_2):\,\Omega \bij \Omega\times\Omega$, which de-interleaves
            the coordinates of a sequence, is a Borel isomorphism; so
            $\Omega\times\Omega \cong \Omega$ and, iterating, $\Omega^\N \cong \Omega$. We write
            $\Theta^{-1}(\omega,\omega')$ for the inverse, so that
            $\Theta_1\Theta^{-1}(\omega,\omega') = \omega$ and
            $\Theta_2\Theta^{-1}(\omega,\omega') = \omega'$.
        \item[($\Omega$3)] \emph{Fubini property.} For $P_1,P_2 \in \PrPfO$ the two iterated
            integrals of a bounded $\Bcal_{\Omega\times\Omega}$-measurable function agree, so that
            $P_1\otimes P_2$ is well defined on $\Bcal_{\Omega\times\Omega}$ and symmetric. Here
            $\Bcal_{\Omega\times\Omega}$ is the \emph{induced} $\sigma$-algebra of the
            quasi-universal product, \Cref{qus:def:induced-sigma} below; it is in general strictly
            larger than $\Bcal_\Omega\otimes\Bcal_\Omega$, and the strength of ($\Omega$3) lies
            in that.
        \item[($\Omega$4)] \emph{Universal completeness.} $\Bcal_\Omega$ consists of all universally
            measurable sets; in particular it contains every analytic subset of $\Omega$.
        \item[($\Omega$5)] \emph{Universality.} $(\Omega,\Bcal_\otimes)$ is standard Borel, every
            standard Borel space embeds into it with Borel image, $U$ is atomless, and for every
            $P \in \PrPfO$ there is $\psi \in \Omega^\Omega$ with $\psi_*U = P$.
        \item[($\Omega$6)] $\PrPfO$ consists of \emph{all} probability measures on $\Bcal_\Omega$,
            so push-forwards may be taken freely.
    \end{enumerate}
\end{Thm}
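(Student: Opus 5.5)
The statement collects six properties of the universal Hilbert cube. Most are definitional or classical; the only one with real content is the Fubini property ($\Omega$3) on the induced $\sigma$-algebra. I would take them in the order ($\Omega$1), ($\Omega$4), ($\Omega$6), ($\Omega$2), ($\Omega$5), ($\Omega$3).

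\textbf{($\Omega$1), ($\Omega$4), ($\Omega$6).} These follow from \Cref{qus:def:sample-space}. Composites of universally measurable maps are universally measurable: if $f,g$ are $\Bcal_\Omega$-measurable and $P$ is any probability measure, then $g_*P$ extends uniquely to $\Bcal_\Omega$, and preimages of universally measurable sets are $P$-measurable. Identity and constant maps are trivially measurable. Analytic sets are universally measurable by Lusin's theorem, which gives ($\Omega$4). Property ($\Omega$6) is the definition of $\PrPfO$.

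\textbf{($\Omega$2) and ($\Omega$5).} For ($\Omega$2), the map $\Theta$ sending $(\omega_k)_k$ to $\bigl((\omega_{2k})_k,(\omega_{2k+1})_k\bigr)$ is a bijection. Each of $\Theta$ and $\Theta^{-1}$ is Borel, because every coordinate of each is a coordinate projection. Since Borel isomorphisms preserve universal completions, $\Theta$ is also an isomorphism for $\Bcal_\Omega$. Iterating along a bijection $\N\times\N\cong\N$ gives $\Omega^\N\cong\Omega$.

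For ($\Omega$5), $[0,1]^\N$ is compact metrisable, hence standard Borel. Kuratowski's theorem embeds any standard Borel space Borel-isomorphically onto a Borel subset of $[0,1]$, and hence into $\Omega$ via the first coordinate. $U$ is atomless as a product of atomless measures. For the representation $\psi_*U=P$, I would first restrict $P$ to $\Bcal_\otimes$. Since $\Bcal_\otimes$ is standard Borel, the Borel isomorphism theorem for measures, or equivalently the quantile construction coordinatewise via conditional distributions, gives a Borel $\psi$ with $\psi_*U=P$ on $\Bcal_\otimes$. Universal completion then upgrades the equality to $\Bcal_\Omega$, because $P$ on $\Bcal_\Omega$ is the completion of its Borel restriction.

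\textbf{($\Omega$3), the main obstacle.} The induced $\sigma$-algebra $\Bcal_{\Omega\times\Omega}$ is larger than the product $\sigma$-algebra. I would use ($\Omega$2) to identify it. The plots of the quasi-universal product are pairs of random variables, equivalently maps $\Omega\to\Omega\times\Omega$ that become universally measurable after composing with $\Theta^{-1}$. So $A\in\Bcal_{\Omega\times\Omega}$ exactly when $\Theta^{-1}(A)\in\Bcal_\Omega$, that is, when $A$ is universally measurable in the standard Borel space $\Omega\times\Omega$.

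Now take bounded universally measurable $h$ and measures $P_1,P_2$. Let $\overline{P_1\otimes P_2}$ be the completion of the Borel product measure. Then $h$ agrees $\overline{P_1\otimes P_2}$-a.e.\ with a Borel function $\tilde h$. For $\tilde h$, classical Fubini gives equal iterated integrals. It remains to show the null difference does not disturb either iterated integral. Write $\{h\neq\tilde h\}\subseteq N$ with $N$ Borel and null. By Fubini for $\I_N$, almost every section of $N$ is null. Moreover, every section of a universally measurable set is universally measurable, since sections are preimages under the measurable map $\omega'\mapsto(\omega,\omega')$. Hence each inner integral of $h$ is defined and agrees a.e.\ with that of $\tilde h$.

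The delicate point is measurability of $\omega\mapsto\int h(\omega,\omega')\,P_2(d\omega')$ for non-Borel $h$. I would handle it by the same sandwich: this map equals the Borel map $\omega\mapsto\int\tilde h(\omega,\omega')\,P_2(d\omega')$ off a $P_1$-null set. It is therefore $P_1$-measurable, and since $P_1$ is arbitrary here, this is what the iterated integral requires. Symmetry of $P_1\otimes P_2$ then follows by applying $\Theta$-invariance and the swap map.
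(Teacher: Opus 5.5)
The paper does not prove this statement. \Cref{sec:qus} recalls it without proof from \cite[\S\S5--7]{For21}. The nearest it comes to a proof is \Cref{qus:lem:borel-suffices}, which shows two things:
\begin{itemize}
\item Borel maps are admissible, by the same null-set sandwich you use in ($\Omega$1) and ($\Omega$5).
\item $A\mapsto\Theta(A)$ is a bijection $\Bcal_\Omega\bij\Bcal_{\Omega\times\Omega}$. This is exactly your identification of $\Bcal_{\Omega\times\Omega}$ with the universally measurable subsets of the standard Borel square.
\end{itemize}

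Your proposal is a self-contained route where the paper only cites, and it is correct. ($\Omega$1), ($\Omega$4) and ($\Omega$6) are definitional plus Lusin's theorem. ($\Omega$2) and ($\Omega$5) are classical: Kuratowski, a measure-isomorphism or quantile construction, then the sandwich to pass from $\Bcal_\otimes$ to $\Bcal_\Omega$. For ($\Omega$3), you reduce via $\Theta$ to Fubini for universally measurable functions on $\Omega\times\Omega$. You then replace $h$ by a Borel $\tilde h$ off a Borel $(P_1\otimes P_2)$-null set $N$, and the same $\tilde h$ and $N$ serve both orders of integration.

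The one step worth writing out is the one you flag. To make the inner integral $\Bcal_\Omega$-measurable, not merely $P_1$-measurable, you rerun the argument with an arbitrary test measure $Q$ in place of $P_1$. This means re-choosing $\tilde h$ for $Q\otimes P_2$ each time.

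What the citation buys is economy, and the link to the axiomatic sample spaces of \cite{For21}, of which $\Omega$ is an instance. What your route buys is transparency. It shows that the ``strength'' of ($\Omega$3) is just Fubini for the universal completion of a standard Borel product, needing nothing beyond Lusin's theorem and classical Fubini.

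Two small simplifications:
\begin{itemize}
\item Closure of $\Omega^\Omega$ under composition is immediate from its definition as the $\Bcal_\Omega$-$\Bcal_\Omega$-measurable maps. The extension argument is needed only for Borel maps such as $\Theta^{\pm1}$ and $\psi$.
\item Symmetry of $P_1\otimes P_2$ needs no ``$\Theta$-invariance''. It is the equality of iterated integrals applied to $\I_{s^{-1}(A)}$, where $s$ is the swap. Since $s$ is a Borel automorphism of the square, it preserves $\Bcal_{\Omega\times\Omega}$.
\end{itemize}
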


\begin{Lem}[Borel suffices, and $\Theta$ is an isomorphism, {\cite[\S7]{For21}}]
    \label{qus:lem:borel-suffices}
    \begin{enumerate}
        \item A self-map of the cube is admissible as soon as it is Borel-measurable:
            \begin{align}
                \Omega^\Omega &= \Meas\lp(\Omega,\Bcal_\Omega),(\Omega,\Bcal_\otimes)\rp .
                \nonumber
            \end{align}
        \item If $\Phi_1,\Phi_2 \in \Omega^\Omega$ then
            $\Theta^{-1}(\Phi_1,\Phi_2) \in \Omega^\Omega$. Consequently
            $\Theta:\,\Omega\to\Omega\times\Omega$ is an isomorphism of quasi-universal spaces, and
            $A\mapsto\Theta(A)$ is a bijection $\Bcal_\Omega \bij \Bcal_{\Omega\times\Omega}$.
    \end{enumerate}
    \begin{proof}
        (1) ``$\ins$'' holds because $\Bcal_\otimes \ins \Bcal_\Omega$. For ``$\sni$'' let $f$ be
        Borel-measurable and let $S \in \Bcal_\Omega$. For $P \in \PrPfO$ the push-forward $f_*P$ is
        a Borel probability measure on the standard Borel space $(\Omega,\Bcal_\otimes)$, and $S$ is
        universally measurable by ($\Omega$4), so there are $B_1 \ins S \ins B_2$ in $\Bcal_\otimes$
        with $f_*P(B_2\sm B_1) = 0$. Then $f^{-1}(B_1) \ins f^{-1}(S) \ins f^{-1}(B_2)$ with
        $P\lp f^{-1}(B_2)\sm f^{-1}(B_1)\rp = 0$, so $f^{-1}(S)$ lies in the $P$-completion of
        $\Bcal_\Omega$. As $P$ was arbitrary and $\Bcal_\Omega$ is its own universal completion by
        ($\Omega$4), $f^{-1}(S) \in \Bcal_\Omega$.

        (2) The rectangles $A_1\times A_2$ with $A_i \in \Bcal_\otimes$ generate
        $\Bcal_\otimes\otimes\Bcal_\otimes$, and
        $(\Phi_1,\Phi_2)^{-1}(A_1\times A_2) = \Phi_1^{-1}(A_1)\cap\Phi_2^{-1}(A_2) \in
        \Bcal_\Omega$; so $(\Phi_1,\Phi_2)$ is
        $\Bcal_\Omega$-$(\Bcal_\otimes\otimes\Bcal_\otimes)$-measurable, and composing with the
        Borel map $\Theta^{-1}$ of ($\Omega$2) and applying (1) gives
        $\Theta^{-1}(\Phi_1,\Phi_2) \in \Omega^\Omega$. Since $\Theta_1,\Theta_2$ are Borel,
        $\Theta$ is quasi-measurable, and what has just been shown says that its inverse is; an
        isomorphism of quasi-universal spaces induces a bijection of the induced $\sigma$-algebras
        of \Cref{qus:def:induced-sigma}.
    \end{proof}
\end{Lem}

\begin{Lem}[Splitting the uniform seed, {\cite[\S\S5, 7]{For21}}]
    \label{qus:lem:split-U}
    $\Theta_*U = U\otimes U$. Under $U$ the two halves $\Theta_1,\Theta_2$ of a seed are
    independent and each is again uniform, so splitting a seed needs no auxiliary measure.
\end{Lem}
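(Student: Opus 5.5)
The statement to prove is $\Theta_*U = U\otimes U$ on $\Bcal_{\Omega\times\Omega}$. From this, independence and uniformity of $\Theta_1,\Theta_2$ under $U$ follow at once, since $(\Theta_i)_*U = (\pr_i)_*(U\otimes U) = U$. The strategy is to compare the two measures first on Borel rectangles, where everything is an explicit product computation, and then to transport the equality to the larger $\sigma$-algebras by uniqueness of extension and completion.

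\emph{Step 1: agreement on rectangles.} Write $\Omega = [0,1]^\N$. The map $\Theta$ sends $\omega$ to the pair of sequences $(\omega_{2k})_k$ and $(\omega_{2k+1})_k$. Consider a cylinder rectangle $A_1\times A_2$, where $A_1$ constrains finitely many coordinates of the first factor and $A_2$ finitely many of the second. Its preimage $\Theta^{-1}(A_1\times A_2)$ is a cylinder in $\Omega$ that constrains disjoint sets of even and odd coordinates. Since $U$ is the infinite product of the uniform measures on $[0,1]$, both $U(\Theta^{-1}(A_1\times A_2))$ and $(U\otimes U)(A_1\times A_2)$ equal the same finite product of one-dimensional Lebesgue measures. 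Such cylinder rectangles form a $\pi$-system generating $\Bcal_\otimes\otimes\Bcal_\otimes$, so Dynkin's theorem gives $\Theta_*U = U\otimes U$ on the Borel $\sigma$-algebra of $\Omega\times\Omega$. This uses that $\Theta$ is Borel, by ($\Omega$2).

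\emph{Step 2: extension to $\Bcal_{\Omega\times\Omega}$.} This is the step I expect to be the main obstacle, because $\Bcal_{\Omega\times\Omega}$ is the induced $\sigma$-algebra. By ($\Omega$3) it can be strictly larger than $\Bcal_\Omega\otimes\Bcal_\Omega$, so a plain $\pi$-$\lambda$ argument does not reach it. I would instead use \Cref{qus:lem:borel-suffices}(2): every $S\in\Bcal_{\Omega\times\Omega}$ has the form $\Theta(A)$ with $A\in\Bcal_\Omega$, and $A$ is universally measurable by ($\Omega$4). Hence $A$ is squeezed between Borel sets $B_1\ins A\ins B_2$ with $U(B_2\sm B_1)=0$. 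Because $\Theta$ is a Borel isomorphism, the images $\Theta(B_1)\ins S\ins\Theta(B_2)$ are Borel in $\Omega\times\Omega$. By Step 1,
\begin{align}
    (U\otimes U)\lp\Theta(B_2)\sm\Theta(B_1)\rp &= U(B_2\sm B_1) = 0 .
\end{align}
So $(U\otimes U)(S)$, which is well defined by ($\Omega$3), is squeezed between $(U\otimes U)(\Theta(B_1))$ and $(U\otimes U)(\Theta(B_2))$. Both of these equal $U(A)=(\Theta_*U)(S)$, and so $(U\otimes U)(S) = (\Theta_*U)(S)$.

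\emph{Step 3: a remaining check.} One point still has to be verified: the measure $U\otimes U$ of ($\Omega$3), restricted to Borel sets, is the ordinary Borel product measure. This holds because ($\Omega$3) defines it by iterated integration, which on Borel rectangles gives the product of the factors. The main care, then, is making Step 2 rigorous: ensuring that the extended $U\otimes U$ is monotone, and is determined by its Borel values on universally measurable sets.
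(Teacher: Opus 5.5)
The paper does not prove this lemma. \Cref{sec:qus} recalls it from \cite[\S\S5, 7]{For21} without proof, so there is no in-paper argument to compare yours against. Judged on its own, your proposal is correct and self-contained relative to ($\Omega$2)--($\Omega$4).

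\begin{itemize}
\item \textbf{Step 1.} The cylinder computation together with Dynkin's theorem gives $\Theta_*U = U\otimes U$ on $\Bcal_\otimes\otimes\Bcal_\otimes$.
\item \textbf{Step 2.} The Borel sandwich then carries this to $\Bcal_{\Omega\times\Omega}$. This is the same device the paper uses in its proof of \Cref{qus:lem:borel-suffices}(1).
\end{itemize}

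The worry you raise at the end is not a real obstacle.
\begin{itemize}
\item The ($\Omega$3) product is defined by iterated integrals. It is therefore monotone, and by monotone convergence applied twice it is countably additive on $\Bcal_{\Omega\times\Omega}$.
\item Every Borel subset of $\Omega\times\Omega$ lies in $\Bcal_{\Omega\times\Omega}$. For an admissible $(X_1,X_2)$, a Borel rectangle pulls back to $X_1^{-1}(A_1)\cap X_2^{-1}(A_2)\in\Bcal_\Omega$, and the sets pulling back into $\Bcal_\Omega$ form a $\sigma$-algebra.
\item So the ($\Omega$3) product restricts to a measure on $\Bcal_\otimes\otimes\Bcal_\otimes$. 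That measure equals the ordinary product on rectangles, hence everywhere by Dynkin.
\item Your sandwich is exactly the statement that a measure on $\Bcal_{\Omega\times\Omega}$ is determined by its Borel values.
\end{itemize}

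Three small remarks.
\begin{itemize}
\item You do not need the bijection $\Bcal_\Omega\bij\Bcal_{\Omega\times\Omega}$ of \Cref{qus:lem:borel-suffices}(2). Since $\Theta$ is an admissible random variable of $\Omega\times\Omega$, $\Theta^{-1}(S)\in\Bcal_\Omega$ for $S\in\Bcal_{\Omega\times\Omega}$ already follows from the definition of the induced $\sigma$-algebra.
\item In Step 1, a cylinder with a general Borel base in $[0,1]^r$ has measure given by $r$-dimensional Lebesgue measure, not a product of one-dimensional ones. Alternatively, restrict to boxes, which still form a generating $\pi$-system.
\item The deduction $(\Theta_i)_*U = U$ and the independence of $\Theta_1,\Theta_2$ on all of $\Bcal_\Omega$ use that $A\times\Omega$ and $A_1\times A_2$ lie in $\Bcal_{\Omega\times\Omega}$ for $A,A_i\in\Bcal_\Omega$. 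This holds by the same pull-back argument.
\end{itemize}
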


This single identity carries every computation in \Cref{sec:prob}. It is worth saying once, in
advance, what the pattern is: whenever a construction needs two independent sources of randomness
--- a parameter and a sample, two factors of a product, a mixture and the measure it mixes --- we
split one seed into two with $\Theta$ and use \Cref{qus:lem:split-U} to see that the two halves are
independent uniforms. No step below does anything else.

\subsection{Quasi-Universal Spaces}
\label{qus:sec:qus}

\begin{Def}[Quasi-universal spaces, {\cite[\S\S2, 7]{For21}}]
    \label{qus:def:qus}
    A \emph{quasi-universal space} $(\Scal,\Scal^\Omega)$ is a set $\Scal$ together with a set
    $\Scal^\Omega$ of maps $X:\,\Omega\to\Scal$, the \emph{admissible random variables}, containing
    all constant maps and satisfying $X\circ\Phi \in \Scal^\Omega$ for all $X \in \Scal^\Omega$ and
    $\Phi \in \Omega^\Omega$. A map $g:\,\Scal\to\Tcal$ is \emph{quasi-measurable} if
    $g\circ X \in \Tcal^\Omega$ for all $X \in \Scal^\Omega$. The resulting category is $\QUS$, and
    $\Scal^\Omega = \QUS(\Omega,\Scal)$.
\end{Def}

\begin{Def}[The induced $\sigma$-algebra, {\cite[\S3]{For21}}]
    \label{qus:def:induced-sigma}
    Put
    \begin{align}
        \Bcal_\Scal &:= \lC A \ins \Scal \st \forall X \in \Scal^\Omega.\;
        X^{-1}(A) \in \Bcal_\Omega \rC ,
    \end{align}
    the final $\sigma$-algebra of the family $\Scal^\Omega$. Every admissible random variable and
    every quasi-measurable map is measurable for these $\sigma$-algebras, and
    \begin{align}
        \Bcal_\Scal &= \bigcap_{X \in \Scal^\Omega,\, P \in \PrPfO} X_*\lp\Bcal_\Omega\rp_P ,
        \label{eq:induced-sigma-explicit}
    \end{align}
    so $\Bcal_\Scal$ is universally complete. The inclusion
    $\Scal^\Omega \ins \Meas\lp(\Omega,\Bcal_\Omega),(\Scal,\Bcal_\Scal)\rp$ is in general strict.
\end{Def}

\begin{Thm}[$\QUS$ is a quasitopos, {\cite[\S2]{For21}}]
    \label{qus:thm:quasitopos}
    $\QUS$ is complete, cocomplete, cartesian closed and locally cartesian closed, and it is a
    quasitopos. Products and function spaces are
    \begin{align}
        \lp\textstyle\prod_i\Scal_i\rp^\Omega &= \textstyle\prod_i \Scal_i^\Omega, &
        \lp\Tcal^\Scal\rp^\Omega &= \lC Y \st \forall \Phi \in \Omega^\Omega\ \forall X \in
        \Scal^\Omega.\ \lp\omega\mapsto Y(\Phi(\omega))(X(\omega))\rp \in \Tcal^\Omega\rC ,
    \end{align}
    coproducts are \emph{rigid}, $\lp\coprod_i\Scal_i\rp^\Omega = \bigcup_i\iota_i\circ
    \Scal_i^\Omega$, and the indiscrete two-point space $\twop$ of \Cref{sss:eg:two} classifies
    embeddings.
\end{Thm}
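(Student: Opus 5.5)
The statement is quoted from \cite[\S2]{For21}, so the plan is to reconstruct the argument there along the standard route for concrete presheaves. The one-object site is $\Omega$ with monoid $\Omega^\Omega$. $\QUS$ is then the category of concrete presheaves on that monoid which contain the constants, i.e.\ concrete sheaves for the trivial coverage.

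For limits, I would take the underlying limit in $\Sets$ and declare a map $\Omega\to\lim\Scal_i$ admissible when every projection is admissible. Constants and closure under $\Omega^\Omega$ are inherited componentwise, which gives the product formula. For colimits, I would take the set-theoretic colimit and the smallest family of random variables containing the images of the $\Scal_i^\Omega$ and all constants, closed under precomposition. For coproducts, the union $\bigcup_i\iota_i\circ\Scal_i^\Omega$ is already closed under $\Omega^\Omega$ and contains the constants, so it is the coproduct structure; rigidity is read off directly.

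Cartesian closure comes next. I would define $\Tcal^\Scal$ as the set $\QUS(\Scal,\Tcal)$ with the stated family of random variables. It contains the constants, since a constant $Y$ composed with admissible $X$ is admissible. It is closed under $\Psi\in\Omega^\Omega$, by replacing $\Phi$ with $\Psi\circ\Phi$. For the adjunction $\QUS(\Rcal\times\Scal,\Tcal)\cong\QUS(\Rcal,\Tcal^\Scal)$, I would curry $g$ and check that for $R\in\Rcal^\Omega$ the map $\omega\mapsto Y(\Phi\omega)(X\omega)=g(R\Phi\omega,X\omega)$ is admissible, because $(R\circ\Phi,X)$ is a product random variable; uncurrying uses $\Phi=\id_\Omega$. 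The point that needs care is that the pairing $\omega\mapsto(\Phi(\omega),\omega)$ is \emph{not} needed. The product is computed componentwise, so no measurability of diagonals is required.

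Local cartesian closure is essentially the same argument over a base. For $f:\Scal\to\Zcal$ and $p:\Tcal\to\Zcal$, I would take the set of pairs $(z,s)$ with $s$ a section over the fibre. The admissible random variables are the families that, after pulling back along every $\Phi$ and every admissible $X$ into the domain fibre-compatibly, give admissible maps. Slices of $\QUS$ are again categories of concrete presheaves on a concrete site, the category of elements restricted to constants, so the same argument applies. I expect this bookkeeping to be the main obstacle: keeping fibres and plots compatible.

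For the quasitopos claim, I would show that the strong monomorphisms are exactly the embeddings, i.e.\ injections carrying the initial structure. I would then show that $\twop$, with all maps $\Omega\to\{0,1\}$ admissible, classifies them. A subset $A\ins\Scal$ with subspace structure corresponds to its indicator, which is quasi-measurable into $\twop$ trivially. Pullback along $\chi:\Scal\to\twop$ of $1$ recovers $\chi^{-1}(1)$ with the initial structure, and uniqueness holds since morphisms are functions. Together with local cartesian closure and finite limits and colimits, this gives a quasitopos.
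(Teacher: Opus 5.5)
The paper recalls this theorem from \cite{For21} without proof. Its proof of the $\SSS$ analogue in \Cref{sss:sec:constructions,sss:sec:cc} follows your route step for step, and your treatment of limits, colimits, rigid coproducts, the $\Phi$-quantified function space and the classifier is correct.

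\textbf{The genuine gap is in local cartesian closedness.} For $f:\Scal\to\Zcal$ and $p:\Tcal\to\Zcal$ the underlying set is $\bigsqcup_{z}\QUS\bigl(f^{-1}(z),p^{-1}(z)\bigr)$, with the fibres carrying their subspace structures; the elements are maps between fibres, not sections. Your admissibility condition says: for all $\Phi\in\Omega^\Omega$ and $X\in\Scal^\Omega$ with $f\circ X=\varpi\circ F\circ\Phi$, the map $\omega\mapsto F(\Phi\omega)(X\omega)$ lies in $\Tcal^\Omega$. It must be supplemented by $\varpi\circ F\in\Zcal^\Omega$, as in \Cref{eq:internal-hom}.

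Without that clause the construction already fails for $f=p=\id_\Zcal$. Every fibre is a singleton, so $F(\Phi\omega)(X\omega)=X(\omega)$ is admissible whenever $X$ is. Hence every map $\Omega\to\Zcal$ qualifies, and you obtain $\Zcal$ with the indiscrete structure. But $p^f$ must be the terminal object $\id_\Zcal$ of $\QUS_{/\Zcal}$, and with your structure $\varpi$ is not even quasi-measurable unless $\Zcal$ is indiscrete. Over a point with empty fibre the condition is vacuous, which is the same phenomenon.

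Your fallback does not repair this. $\QUS_{/\Zcal}$ is not a category of concrete presheaves on a concrete site in the sense of \Cref{sss:def:concrete-site}. The global elements of an object $f$ of the slice are the quasi-measurable sections of $f$, and these need not separate morphisms; a non-surjective $f$ has none. So the slice adjunction has to be verified by hand: curry and uncurry over $\Zcal$, with fibre products computed as subspaces, exactly as in \Cref{sss:prp:internal-hom}. With the base clause in place that verification goes through.

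\textbf{Two smaller points.}

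First, the monoid $\Omega^\Omega$, viewed as a one-object category, has no terminal object. So it is not a concrete site, and \cite{BH11} cannot be quoted for it as it stands. You must adjoin the point $\pt$, which splits the idempotent $\omega\mapsto\omega_0$ as in \Cref{site:prp:retracts} with $n=0$. Your requirement that the presheaves contain the constants is then just functoriality along $\Omega\to\pt$. Since you verify everything directly, this only affects the framing.

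Second, the identification of strong monomorphisms with embeddings is announced but not argued, and the quasitopos claim rests on it. The argument is that of \Cref{sss:prp:eq-coeq}:
\begin{itemize}
\item monos are the injections because $\one$ is a generator;
\item epis are the surjections because $\twop$ is a cogenerator;
\item for a strong mono $m:\Scal\to\Tcal$, the identity from $\Scal$ to its underlying set carrying the initial structure along $m$ is an epimorphism, so orthogonality produces a quasi-measurable inverse and $m$ is an embedding;
\item conversely, an embedding has the lifting property, because the lift exists on sets and is quasi-measurable by initiality.
\end{itemize}
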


\subsection{The Push-forward Probability Monad on \texorpdfstring{$\QUS$}{QUS}}
\label{qus:sec:monad}

\begin{Def}[The object of measures, {\cite[\S5]{For21}}]
    \label{qus:def:PrPf}
    For $\Scal \in \QUS$,
    \begin{align}
        \PrPf(\Scal) &:= \lC X_*P \st X \in \Scal^\Omega,\, P \in \PrPfO\rC \ins
        \Pcal(\Scal,\Bcal_\Scal), \\
        \PrPf(\Scal)^\Omega &:= \lC \lp\omega\mapsto X(\omega)_*P\rp \st
        X \in \lp\Scal^\Omega\rp^\Omega,\, P \in \PrPfO\rC .
    \end{align}
\end{Def}

\begin{Thm}[The push-forward monad on $\QUS$, {\cite[\S\S5, 7]{For21}}]
    \label{qus:thm:PrPf}
    $(\PrPf,\delta,\Mbb)$, with $\delta_\Scal(x) := \delta_x$ and
    $\Mbb_\Scal(\Gamma) := \int \nu\,\Gamma(d\nu)$, is a strong commutative affine monad on $\QUS$. The
    product of measures $\mu\otimes\nu$ is defined on all of $\Bcal_{\Scal\times\Tcal}$, not merely
    on $\Bcal_\Scal\otimes\Bcal_\Tcal$, by
    \begin{align}
        (\mu\otimes\nu)(C) &:= \int\!\!\int \I\lB \lp X(\omega_1),Y(\omega_2)\rp \in C \rB
        \, U(d\omega_2)\, U(d\omega_1), \qquad C \in \Bcal_{\Scal\times\Tcal},
        \label{eq:otimes-section}
    \end{align}
    for any $X,Y$ with $X_*U = \mu$, $Y_*U = \nu$; the right-hand side is well defined by
    ($\Omega$3), independent of the choice of $X,Y$, and restricts to the usual product measure on
    $\Bcal_\Scal\otimes\Bcal_\Tcal$.
\end{Thm}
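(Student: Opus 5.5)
The plan is to follow the pattern announced after \Cref{qus:lem:split-U}: every structural map is realised on random variables by splitting a seed with $\Theta$. Independence and uniformity of the halves under $U$ then turn each required identity into a change of variables. I will first use ($\Omega$5) to normalise. Every $P \in \PrPfO$ is $\psi_*U$ for some $\psi \in \Omega^\Omega$, so every element of $\PrPf(\Scal)$ is $X_*U$, and every admissible random measure is $\omega\mapsto X(\omega)_*U$ with $X \in (\Scal^\Omega)^\Omega$, after replacing $X(\omega)$ by $X(\omega)\circ\psi$. By the function-space formula of \Cref{qus:thm:quasitopos}, such an $X$ corresponds to an uncurried map $\tilde X(\omega,\omega') := X(\omega)(\omega')$. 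Precomposing with $\Theta^{-1}$ then gives a single element of $\Scal^\Omega$, by \Cref{qus:lem:borel-suffices}(2).

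With this normal form in hand, functoriality is immediate: $g\circ X$ is admissible whenever $X$ is, and $g_*(X_*U) = (g\circ X)_*U$. The unit $\delta_x$ is the push-forward of $U$ along a constant map, and a random Dirac measure $\omega\mapsto\delta_{X(\omega)}$ is represented by $X$ composed with the projection onto the first coordinate, which is admissible. For the multiplication, I take an admissible random variable of $\PrPf(\PrPf(\Scal))$, represented by $\omega\mapsto (\omega'\mapsto Y(\omega)(\omega')_*U)$ with a doubly curried family $Y$. I would then show
\begin{align}
    \Mbb\bigl((\omega'\mapsto Y(\omega)(\omega')_*U)_*U\bigr) = \bigl(\omega''\mapsto Y(\omega)(\Theta_1\omega'')(\Theta_2\omega'')\bigr)_*U .
\end{align}
This follows by Fubini, namely ($\Omega$3) with $\Theta_*U = U\otimes U$, and it exhibits $\Mbb$ as admissible. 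The monad laws then reduce to checking that the representatives agree after a reindexing by $\Theta$. Associativity is the statement that the two iterated splittings $\Omega\to\Omega^3$ push $U$ to $U^{\otimes3}$, which is \Cref{qus:lem:split-U} applied twice.

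For the strength, I put $\tau(x,X_*U) := (x,X(-))_*U$, which is quasi-measurable since constants pair with admissible maps. The product of measures is $(X,Y)_*(U\otimes U)$ transported along $\Theta$. Well-definedness and independence of the chosen representatives on all of $\Bcal_{\Scal\times\Tcal}$, together with the stated formula \eqref{eq:otimes-section}, come from ($\Omega$3). Indeed, $\I[(X(\omega_1),Y(\omega_2))\in C]$ is $\Bcal_{\Omega\times\Omega}$-measurable because $(X\circ\Theta_1, Y\circ\Theta_2)$ is admissible. Its integral depends on $X$ only through $X_*U$: for fixed $\omega_2$, the inner integral is $X_*U$ of a slice lying in $\Bcal_\Scal$. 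Commutativity is exactly the symmetry of the two iterated integrals in ($\Omega$3). Affineness is $\PrPf(\one)=\{\delta_\ast\}$.

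The main obstacle is that $\PrPf(\Scal)$ sits inside $\Pcal(\Scal,\Bcal_\Scal)$ only as a set of measures, while admissibility is phrased through representatives. Every equation must therefore be checked on measures, not on representatives. The delicate case is the multiplication. I must verify that $\int\nu\,\Gamma(d\nu)$ is well defined, that is, that $\nu\mapsto\nu(A)$ is $\Bcal_{\PrPf(\Scal)}$-measurable. To do this I would pull back along a representative: $\omega'\mapsto Y(\omega')_*U(A) = \int \I[\tilde Y(\omega',\omega'')\in A]\,U(d\omega'')$ is $\Bcal_\Omega$-measurable by the Fubini property ($\Omega$3). I would then show that the resulting measure is independent of the chosen representative. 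For this I would rewrite its value on $A$ as an iterated integral over $\Omega\times\Omega$ and apply ($\Omega$3) once more. This is the step where the strength of ($\Omega$3) on the induced $\sigma$-algebra, rather than on $\Bcal_\Omega\otimes\Bcal_\Omega$, is genuinely needed.
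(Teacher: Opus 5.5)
The paper only quotes this result from \cite{For21} and gives no proof of its own, but your argument is the $n=0$ case of the proofs in \Cref{sec:prob}, so it is correct and takes essentially the same approach. The shared steps are: uncurrying through $\Theta$ to obtain a single admissible representative; placing $G^{-1}(A)$ in $\Bcal_{\Omega\times\Omega}$, via admissibility of $G\circ\Theta$, before invoking ($\Omega$3); and the seed reindexing of \Cref{prob:thm:mult}.

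One small slip: the uncurried $\tilde X$ becomes an element of $\Scal^\Omega$ by precomposing with $\Theta$, giving $\sigma\mapsto X(\Theta_1\sigma)(\Theta_2\sigma)$, not with $\Theta^{-1}$. \Cref{qus:lem:borel-suffices}(2) is what you need for the reverse, currying direction, where for $Z\in\Scal^\Omega$ the map $\omega\mapsto Z(\Theta^{-1}(\omega,-))$ lies in $(\Scal^\Omega)^\Omega$.
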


Everything in \Cref{sec:prob} is a lift of \Cref{qus:thm:PrPf} along the site of \Cref{sec:site},
and the reader will find that each proof there is the corresponding proof of \cite{For21} with one
extra argument carried along.

\subsection{One Classical Lemma}
\label{qus:sec:caratheodory}

\begin{Lem}[Carath\'eodory]
    \label{qus:lem:caratheodory}
    Let $(A,\Acal)$ be a measurable space, $T$ a separable metrizable space, $E$ a metrizable
    space, and $g:\,T\times A \to E$ a map which is continuous in the first argument for each fixed
    second argument and $\Acal$-measurable in the second for each fixed first. Then $g$ is
    $\Bcal_T\otimes\Acal$-measurable, where $\Bcal_T$ is the Borel $\sigma$-algebra of $T$.
    \begin{proof}
        Let $D \ins T$ be countable dense and, for $k \ge 1$, let $\lC T_{k,d}\rC_{d \in D}$ be the
        Borel partition of $T$ obtained from the cover by the balls $B(d,1/k)$, $d\in D$, by
        disjointification along a fixed enumeration of $D$. Put
        $g_k(t,a) := g(d_k(t),a)$, where $d_k(t)$ is the unique $d$ with $t \in T_{k,d}$. Each
        $g_k$ is $\Bcal_T\otimes\Acal$-measurable, being a countable ``staircase'': on
        $T_{k,d}\times A$ it equals the measurable map $g(d,-)$ composed with the projection.
        Since $d_k(t) \to t$ and $g$ is continuous in the first argument, $g_k \to g$ pointwise,
        and a pointwise limit of measurable maps into a metrizable space is measurable.
    \end{proof}
\end{Lem}

We use \Cref{qus:lem:caratheodory} chiefly through the following consequence, which is what makes
the composition law of the site of \Cref{sec:site} work: a map $\R^m\times\Omega \to \R^n$ which
is smooth in the first argument and $\Bcal_\Omega$-measurable in the second is automatically
$\Bcal^\circ_{\R^m}\otimes\Bcal_\Omega$-measurable --- here and throughout,
$\Bcal^\circ_{\R^k}$ denotes the \emph{Borel} $\sigma$-algebra of $\R^k$, the superscript
distinguishing it from the induced $\sigma$-algebra $\Bcal_{\R^k}$ of
\Cref{qus:def:induced-sigma} --- and hence may be precomposed with a measurable pair
$\omega\mapsto\lp W(\omega),\Phi(\omega)\rp$ without further hypotheses. It is used once more,
with a manifold target, in \Cref{sss:eg:manifolds}.

\section{The Site of Test Objects}
\label{sec:site}

The whole design of this paper is in the choice of test objects. We take them to be the products
\begin{align}
    \Omega_n &:= \R^n \times \Omega, \qquad n \ge 0,
\end{align}
of the Cartesian space $\R^n$ with the sample space of \Cref{qus:def:sample-space}, so that
$\Omega_0 = \Omega$. A plot of a space will be a map out of some $\Omega_n$ which is smooth in the
$\R^n$-direction and admissible in the $\Omega$-direction \emph{at once}. The point of insisting on
this, rather than on two separate families of maps out of $\R^n$ and out of $\Omega$, is not that
one obtains more plots --- \Cref{site:prp:karoubi} shows that the purely smooth test objects are
redundant --- but that one obtains more \emph{morphisms between test objects}. That is
\Cref{site:rem:mixed}, and it is the reason the probability monad of \Cref{sec:prob} exists.

\subsection{The Site}
\label{site:sec:def}

\begin{Def}[The site $\Site$]
    \label{site:def:site}
    Let $\Site$ be the category with objects $\Omega_n$, $n \ge 0$, and with
    \begin{align}
        \Site(\Omega_m,\Omega_n) &:= \lC (g,\Phi) \st g \in \Cmix(\Omega_m,\R^n),\
        \Phi \in \Omega^\Omega \rC, &
        (g,\Phi)(t,\omega) &:= \lp g(t,\omega),\, \Phi(\omega)\rp,
    \end{align}
    where $\Cmix(\Omega_m,\R^n)$, the set of \emph{mixed-smooth} maps, is the set of maps
    $g:\,\Omega_m = \R^m\times\Omega\to\R^n$ such that
    \begin{enumerate}
        \item[(M1)] $g(-,\omega) \in C^\infty(\R^m,\R^n)$ for every $\omega \in \Omega$, and
        \item[(M2)] $g(t,-)$ is $\Bcal_\Omega$-$\Bcal^\circ_{\R^n}$-measurable for every
            $t \in \R^m$, where $\Bcal^\circ_{\R^n}$ is the \emph{Borel} $\sigma$-algebra.
    \end{enumerate}
    Note the asymmetry: the smooth component may depend on the seed, whereas the seed component
    $\Phi$ may \emph{not} depend on $t$. This is not a stipulation of convenience --- it is forced
    if $\Site$ is to be a category at all, \Cref{site:rem:flat}.
\end{Def}

\begin{Lem}[$\Site$ is a category]
    \label{site:lem:category}
    Identities lie in $\Site$ and composition of the maps above is again of the same form, so
    \Cref{site:def:site} defines a category, and every $\Site$-morphism is in particular
    measurable from $\Bcal^\circ_{\R^m}\otimes\Bcal_\Omega$ to
    $\Bcal^\circ_{\R^n}\otimes\Bcal_\Omega$.
    \begin{proof}
        For the identity take $g = \pr_{\R^n}$ and $\Phi = \id_\Omega$. Let
        $(g,\Phi) \in \Site(\Omega_m,\Omega_n)$ and $(g',\Phi') \in \Site(\Omega_n,\Omega_k)$. The
        composite is $(g'',\Phi'\circ\Phi)$ with
        $g''(t,\omega) = g'\lp g(t,\omega),\Phi(\omega)\rp$, and $\Phi'\circ\Phi \in \Omega^\Omega$
        by ($\Omega$1). For (M1), fix $\omega$: $g(-,\omega)$ is $C^\infty$ and
        $g'(-,\Phi(\omega))$ is $C^\infty$, so the composite is $C^\infty$. For (M2), fix $t$: by
        \Cref{qus:lem:caratheodory} applied to $g'$ --- which is continuous in the first argument by
        (M1) and measurable in the second by (M2) --- the map $g'$ is
        $\Bcal^\circ_{\R^n}\otimes\Bcal_\Omega$-measurable; and
        $\omega \mapsto \lp g(t,\omega),\Phi(\omega)\rp$ is measurable into
        $\lp\R^n\times\Omega,\Bcal^\circ_{\R^n}\otimes\Bcal_\Omega\rp$ by (M2) for $g$ and by
        $\Phi \in \Omega^\Omega$. The composite of the two is measurable. Associativity and the unit
        laws are inherited from composition of maps. The last claim is
        \Cref{qus:lem:caratheodory} again.
    \end{proof}
\end{Lem}

\begin{Not}[Distinguished morphisms]
    \label{site:not:morphisms}
    We name the substitutions used throughout. Let $n,m \ge 0$.
    \begin{align}
        \pi_n &:= (\,!\,,\id_\Omega):\; \Omega_n \to \Omega_0, &
        \pi_n(t,\omega) &= \omega, \\
        \iota^n_{t_0} &:= (t_0,\id_\Omega):\; \Omega_0 \to \Omega_n, &
        \iota^n_{t_0}(\omega) &= (t_0,\omega), \\
        \lb W,\Phi\rb &:= (W,\Phi):\; \Omega_0 \to \Omega_n, &
        \lb W,\Phi\rb(\omega) &= \lp W(\omega),\Phi(\omega)\rp, \\
        \Xi^{(n)} &:= (\pr_{\R^n},\Xi):\; \Omega_n \to \Omega_n, &
        \Xi^{(n)}(t,\omega) &= \lp t,\Xi(\omega)\rp, \\
        \varphi^{(n)} &:= (\varphi\circ\pr_{\R^m},\id_\Omega):\; \Omega_m \to \Omega_n, &
        \varphi^{(n)}(t,\omega) &= \lp\varphi(t),\omega\rp,
    \end{align}
    for $W \in \Meas\lp(\Omega,\Bcal_\Omega),(\R^n,\Bcal^\circ_{\R^n})\rp$,
    $\Phi,\Xi \in \Omega^\Omega$, $t_0 \in \R^n$
    and $\varphi \in C^\infty(\R^m,\R^n)$. All five are $\Site$-morphisms. The third, in which the
    smooth coordinate is read off the seed, is the \emph{mixed} morphism of
    \Cref{site:rem:mixed}; it exists only because the target carries both directions, and it has no
    counterpart if the two directions are kept apart.
\end{Not}

\subsection{Finite Products}
\label{site:sec:products}

\begin{Prp}[$\Site$ has binary products]
    \label{site:prp:products}
    For $m,n \ge 0$ the object $\Omega_{m+n}$, together with
    \begin{align}
        q_1 &:= \lp \pr_{\R^m}\circ\pr_{\R^{m+n}},\ \Theta_1 \rp:\; \Omega_{m+n} \to \Omega_m, &
        q_2 &:= \lp \pr_{\R^n}\circ\pr_{\R^{m+n}},\ \Theta_2 \rp:\; \Omega_{m+n} \to \Omega_n,
    \end{align}
    where $\R^{m+n} = \R^m\times\R^n$ and $\Theta = (\Theta_1,\Theta_2)$ is the isomorphism of
    ($\Omega$2), is a product of $\Omega_m$ and $\Omega_n$ in $\Site$. Consequently $\Site$ has all
    finite non-empty products. It has \emph{no} terminal object: $\Site(\Omega_0,\Omega_0) = \Omega^\Omega$ has more than
    one element, and for $n \ge 1$ so does $\Site(\Omega_0,\Omega_n)$, which contains the distinct
    morphisms $\iota^n_{t_0}$, $t_0 \in \R^n$.
    \begin{proof}
        $q_1,q_2$ are $\Site$-morphisms, their smooth parts being linear projections and their seed
        parts lying in $\Omega^\Omega$ by ($\Omega$2) together with
        \Cref{qus:lem:borel-suffices}(1). Let $f_i = (g_i,\Phi_i) \in
        \Site(\Omega_k,\Omega_{n_i})$ for $(n_1,n_2) = (m,n)$. A morphism
        $h = (g,\Phi) \in \Site(\Omega_k,\Omega_{m+n})$ satisfies $q_i \circ h = f_i$ if and only if
        $g = (g_1,g_2)$ and $\Theta_i\circ\Phi = \Phi_i$, i.e.\ $\Phi = \Theta^{-1}(\Phi_1,\Phi_2)$.
        This determines $h$ uniquely; and $h$ is a $\Site$-morphism, since $(g_1,g_2)$ satisfies
        (M1) and (M2) componentwise and $\Theta^{-1}(\Phi_1,\Phi_2) \in \Omega^\Omega$ by
        \Cref{qus:lem:borel-suffices}(2).
    \end{proof}
\end{Prp}

The absence of a terminal object is harmless and is repaired in \Cref{site:lem:products-plus}; what
matters is that a site of \emph{products} of the two kinds of test object closes up under products
at all, and it does so for exactly one reason: $\Omega \times \Omega \cong \Omega$. A site of
disjoint smooth and measurable test objects does not.

\subsection{What the Site Already Contains}
\label{site:sec:karoubi}

One might expect to need the purely smooth test objects $\R^n$ as well, and the one-point space
$\pt$. Neither is necessary: both are retracts of objects of $\Site$, hence lie in its Karoubi
envelope, and a plot family on a retract is determined.

\begin{Def}[The enlarged site]
    \label{site:def:site-plus}
    Let $\Site_\pt$ have objects $\Omega_n$ and $\R^n$, $n \ge 0$, with $\R^0 = \pt$ the one-point
    space, and morphisms
    \begin{align}
        \Site_\pt(\Omega_m,\Omega_n) &:= \Site(\Omega_m,\Omega_n), &
        \Site_\pt(\Omega_m,\R^n) &:= \Cmix(\Omega_m,\R^n), \\
        \Site_\pt(\R^m,\R^n) &:= C^\infty(\R^m,\R^n), &
        \Site_\pt(\R^m,\Omega_n) &:= C^\infty(\R^m,\R^n) \times \Omega ,
    \end{align}
    the last being the pairs $(\varphi,\omega_0)$ giving $t \mapsto (\varphi(t),\omega_0)$, so that
    a map into the seed direction out of a purely smooth object is constant. Composition is
    composition of maps, and that it is well defined is checked case by case: the two purely mixed
    cases are \Cref{site:lem:category}; the cases $\Omega_m\to\R^n\to\R^k$ and
    $\R^m\to\Omega_n\to\R^k$ use (M1), (M2) and the chain rule; $\R^m\to\R^n\to\R^k$ is
    composition of $C^\infty$ maps; and in the three remaining cases, all with target $\Omega_k$,
    the seed component of the composite is a constant, $\omega\mapsto\Phi'(\omega_0)$ resp.\
    $\omega\mapsto\omega_0$, which lies in $\Omega^\Omega$ by ($\Omega$1).
\end{Def}

\begin{Lem}[$\Site_\pt$ has all finite products]
    \label{site:lem:products-plus}
    In $\Site_\pt$ the object $\pt = \R^0$ is terminal, and binary products exist:
    \begin{align}
        \R^m\times\R^n &= \R^{m+n}, &
        \R^m\times\Omega_n &= \Omega_{m+n}, &
        \Omega_m\times\Omega_n &\;\cong\; \Omega_{m+n},
        \label{eq:site-plus-products}
    \end{align}
    the first two with the projections of the underlying sets, under the usual identifications
    $\R^m\times\R^n = \R^{m+n}$ and $\R^m\times\lp\R^n\times\Omega\rp =
    \R^{m+n}\times\Omega$. Only the third requires the zip-locker isomorphism $\Theta$, and it is
    an isomorphism rather than an equality for that reason.
    \begin{proof}
        $\Site_\pt(\Omega_m,\pt) = \Cmix(\Omega_m,\R^0)$ and $\Site_\pt(\R^m,\pt) =
        C^\infty(\R^m,\R^0)$ are singletons, so $\pt$ is terminal.

        For $\R^m\times\R^n = \R^{m+n}$: the two projections are smooth. A cone from $\R^k$ is a
        pair $\varphi_i \in C^\infty(\R^k,\R^{n_i})$ and factors uniquely through
        $(\varphi_1,\varphi_2) \in C^\infty(\R^k,\R^{m+n})$; a cone from $\Omega_k$ is a pair
        $f_i \in \Cmix(\Omega_k,\R^{n_i})$ and factors uniquely through $(f_1,f_2)$, which lies in
        $\Cmix(\Omega_k,\R^{m+n})$ because (M1) and (M2) are conditions on the components.

        For $\R^m\times\Omega_n = \Omega_{m+n}$: the projections are $\pr_{\R^m} \in
        \Cmix(\Omega_{m+n},\R^m)$ and $\lp\pr_{\R^n},\id_\Omega\rp \in
        \Site(\Omega_{m+n},\Omega_n)$, where $\pr_{\R^m},\pr_{\R^n}$ are taken on the
        $\R^{m+n}$ factor. A cone from $\Omega_k$ is a pair $f_1 \in \Cmix(\Omega_k,\R^m)$ and
        $f_2 = (g,\Phi) \in \Site(\Omega_k,\Omega_n)$, and $h = (\tilde g,\tilde\Phi) \in
        \Site(\Omega_k,\Omega_{m+n})$ makes both triangles commute if and only if
        $\pr_{\R^m}\circ\tilde g = f_1$, $\pr_{\R^n}\circ\tilde g = g$ and
        $\tilde\Phi = \Phi$, that is $h = \lp (f_1,g),\Phi\rp$, which is a $\Site$-morphism
        because (M1) and (M2) hold componentwise. A cone from $\R^k$ is a pair
        $\varphi \in C^\infty(\R^k,\R^m)$ and $(\psi,\omega_0) \in \Site_\pt(\R^k,\Omega_n)$,
        and factors uniquely through $\lp(\varphi,\psi),\omega_0\rp$. Note that the seed is
        \emph{not} duplicated here, because $\R^m$ carries none; this is why no $\Theta$ appears
        and the product is the object $\Omega_{m+n}$ on the nose.

        For $\Omega_m\times\Omega_n \cong \Omega_{m+n}$: the cone of \Cref{site:prp:products}
        already has the universal property against the test objects $\Omega_k$. Against $\R^k$, a
        cone is a pair $(\varphi_i,\omega_i) \in \Site_\pt(\R^k,\Omega_{n_i})$, and
        $q_i\circ(\varphi,\omega_0) = \lp\pr_{\R^{n_i}}\circ\varphi,\ \Theta_i\omega_0\rp$,
        so the unique factorisation is
        $\lp(\varphi_1,\varphi_2),\ \Theta^{-1}(\omega_1,\omega_2)\rp$.

        Finite products now follow, the empty one being $\pt$.
    \end{proof}
\end{Lem}

\begin{Prp}[$\R^n$ and $\pt$ are retracts]
    \label{site:prp:retracts}
    Fix $\omega_0 \in \Omega$ and let
    \begin{align}
        \iota &:= (\id_{\R^n},\omega_0):\; \R^n \to \Omega_n, &
        \pi &:= \pr_{\R^n}:\; \Omega_n \to \R^n .
    \end{align}
    Then $\pi\circ\iota = \id_{\R^n}$, and $e := \iota\circ\pi \in \Site(\Omega_n,\Omega_n)$,
    $e(t,\omega) = (t,\omega_0)$, is an idempotent of $\Site$ whose splitting is $\R^n$. In
    particular $\pt = \R^0$ splits the idempotent $\omega\mapsto\omega_0$ of $\Omega_0$.
\end{Prp}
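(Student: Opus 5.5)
The plan is to verify directly that $\iota$ and $\pi$ are morphisms of $\Site_\pt$ as defined in \Cref{site:def:site-plus}, compute both composites, and then check the defining property of an idempotent splitting. First, $\iota = (\id_{\R^n},\omega_0)$ is a pair of a $C^\infty$ map and a point of $\Omega$, so it lies in $\Site_\pt(\R^n,\Omega_n)$ by definition. Second, $\pi = \pr_{\R^n}$ lies in $\Site_\pt(\Omega_n,\R^n) = \Cmix(\Omega_n,\R^n)$: (M1) holds because $t\mapsto t$ is smooth for each $\omega$, and (M2) holds because $\omega\mapsto t$ is constant, hence Borel measurable.

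Next, compute the two composites as maps of sets, which is legitimate since composition in $\Site_\pt$ is composition of maps. We have $\pi\iota(t) = \pr_{\R^n}(t,\omega_0) = t$, so $\pi\circ\iota = \id_{\R^n}$. In the other order, $\iota\pi(t,\omega) = (t,\omega_0)$, so $e = (\pr_{\R^n},\mathrm{const}_{\omega_0})$. Its smooth part is $\pr_{\R^n}$, which satisfies (M1) and (M2). Its seed part, the constant map at $\omega_0$, lies in $\Omega^\Omega$ by ($\Omega$1). Hence $e \in \Site(\Omega_n,\Omega_n)$, a morphism of $\Site$ proper and not merely of $\Site_\pt$. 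Idempotency follows formally: $e\circ e = \iota(\pi\iota)\pi = \iota\pi = e$.

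It then remains to observe that a factorisation $e = \iota\circ\pi$ with $\pi\circ\iota = \id$ is by definition a splitting of $e$ in $\Site_\pt$ with splitting object $\R^n$. This identifies $\R^n$ as an object of the Karoubi envelope of $\Site$, up to the canonical isomorphism that any two splittings of an idempotent share. For $n=0$ we have $\R^0 = \pt$ and $\Omega_0 = \Omega$, so $e$ reduces to the constant self-map $\omega\mapsto\omega_0$ of $\Omega$, and the same computation applies verbatim.

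I expect no real obstacle. The only point needing care is bookkeeping: one must check that $e$ lies in $\Site$ itself, which uses the constants clause of ($\Omega$1), and that $\iota$ and $\pi$ are morphisms of the enlarged site. The latter holds by construction of $\Site_\pt$, whose definition was tailored precisely so that maps out of $\R^m$ have constant seed component.
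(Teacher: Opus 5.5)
Your proposal is correct and is exactly the direct verification that the paper leaves implicit (it states the proposition without a separate proof): $\iota$ and $\pi$ are $\Site_\pt$-morphisms by definition, the composites are computed on underlying maps, and $e$ lies in $\Site$ itself because its seed component is a constant, admissible by ($\Omega$1). You also correctly note that the splitting takes place in $\Site_\pt$, since $\R^n$ is not an object of $\Site$.
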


The three directions in which a test object can be moved are worth seeing together,
\Cref{fig:directions}.

\begin{figure}[htbp]
\centering
\begin{tikzcd}[column sep=5.5em, row sep=1em]
    \R^n \arrow[r, shift left=.7ex, "\iota"] &
    \Omega_n \arrow[l, shift left=.7ex, "\pi"] \arrow[r, shift left=.7ex, "\pi_n"] &
    \Omega_0 = \Omega \arrow[l, shift left=.7ex, "{\lb W,\Phi\rb}"]
\end{tikzcd}
\caption{The four substitutions between test objects. Only $\lb W,\Phi\rb$ mixes the two
directions.}
\label{fig:directions}
\end{figure}

On the left, $\iota$ and $\pi$ are those of \Cref{site:prp:retracts} and satisfy
$\pi\circ\iota = \id$, so that $\R^n$ is a retract and, by \Cref{site:prp:karoubi} below, carries no
information of its own; these two live in $\Site_\pt$, since $\R^n$ is not an object of $\Site$. On
the right, $\pi_n$ and $\lb W,\Phi\rb$ are those of \Cref{site:not:morphisms} and are genuine
$\Site$-morphisms: $\pi_n$ forgets the parameter, and $\lb W,\Phi\rb$ reads it off the seed. Only
the last of the four mixes the two directions, and it is the one that a disjoint pair of test
objects cannot provide, \Cref{site:rem:mixed}.

\begin{Prp}[Adjoining them changes nothing]
    \label{site:prp:karoubi}
    Let $\Xcal$ be a set and let $\lp\Xcal^{\Omega_n}\rp_{n\ge0}$ be families of maps
    $\Omega_n \to \Xcal$ containing the constants and closed under precomposition with
    $\Site$-morphisms. Then there is exactly one way to extend them to families
    $\Xcal^{\R^n}$ closed under precomposition with $\Site_\pt$-morphisms, namely
    \begin{align}
        \Xcal^{\R^n} &= \lC \gamma:\,\R^n \to \Xcal \st \gamma\circ\pi \in \Xcal^{\Omega_n} \rC ,
        \label{eq:karoubi}
    \end{align}
    and it satisfies $\Xcal^\pt = \Xcal$. Consequently the categories of such structures over
    $\Site$ and over $\Site_\pt$ are isomorphic.
    \begin{proof}
        Suppose the families are given and closed. If $\gamma \in \Xcal^{\R^n}$ then
        $\gamma\circ\pi \in \Xcal^{\Omega_n}$, since $\pi$ is a $\Site_\pt$-morphism; this is
        ``$\ins$'' in \Cref{eq:karoubi}. Conversely if $\gamma\circ\pi \in \Xcal^{\Omega_n}$ then,
        $\iota$ being a $\Site_\pt$-morphism,
        $\gamma = (\gamma\circ\pi)\circ\iota \in \Xcal^{\R^n}$, which is ``$\sni$''. So the
        extension is unique if it exists.

        For existence, define $\Xcal^{\R^n}$ by \Cref{eq:karoubi} and check closure. Let
        $\gamma \in \Xcal^{\R^n}$. For $\varphi \in C^\infty(\R^m,\R^n)$,
        $(\gamma\circ\varphi)\circ\pi = (\gamma\circ\pi)\circ\varphi^{(n)} \in \Xcal^{\Omega_m}$ by
        \Cref{site:not:morphisms}, so $\gamma\circ\varphi \in \Xcal^{\R^m}$. For
        $g \in \Site_\pt(\Omega_m,\R^n)$ we must show $\gamma\circ g \in \Xcal^{\Omega_m}$: indeed
        $(g,\id_\Omega) \in \Site(\Omega_m,\Omega_n)$ and
        $\gamma\circ g = (\gamma\circ\pi)\circ(g,\id_\Omega) \in \Xcal^{\Omega_m}$. Closure of
        $\Xcal^{\Omega_n}$ under $\Site_\pt(\R^m,\Omega_n)$ and $\Site_\pt(\Omega_m,\Omega_n)$ is
        immediate from \Cref{eq:karoubi} and the hypothesis. Finally
        $\Xcal^\pt = \lC x \st \mathrm{const}_x \in \Xcal^\Omega\rC = \Xcal$ because constants
        are plots.
    \end{proof}
\end{Prp}

We therefore work with $\Site$, and use $\Site_\pt$ only when a terminal object is wanted, as in
\Cref{sss:sec:quasitopos}. Note the two readings of \Cref{eq:karoubi}: it says that a purely smooth
plot is the same thing as a seed-independent mixed plot, and hence that the smooth structure is
recoverable from the mixed one, while whether the mixed structure is in turn recoverable from the
smooth and the measurable one is open, \Cref{sss:rem:curves-do-not-suffice,prob:rem:comparison}.

\subsection{Why the Seed Belongs in Every Test Object}
\label{site:sec:why}

\begin{Rem}[The mixed morphisms are the point]
    \label{site:rem:mixed}
    Since \Cref{site:prp:karoubi} says the purely smooth test objects add no plots, one may ask what
    putting $\Omega$ into every test object is for. The answer is
    $\lb W,\Phi\rb \in \Site(\Omega_0,\Omega_n)$ of \Cref{site:not:morphisms}: the substitution in
    which the smooth coordinate is itself read off the seed. Two consequences, both structural.

    First, closure of plot families under $\lb W,\Phi\rb$ says that a smooth family is
    automatically measurable: if $p \in \Xcal^{\Omega_1}$ and $W$ is a measurable real random
    variable, then $\omega\mapsto p(W(\omega),\omega)$ is an admissible random variable. In a theory
    built on two disjoint families of maps out of $\R$ and out of $\Omega$ this is not available
    and has to be imposed as an axiom; here it is an instance of closure.

    Second, and this is what \Cref{sec:prob} lives on, the mixed plot family
    $\Xcal^{\Omega_n}$ is \emph{primitive data} rather than something computed from $\Xcal^\Omega$
    and the curves. This is the obstruction set out as (a), (b), (c) in \Cref{sec:intro}: with two
    separate families, conditions (a) and (b) --- smooth for each seed, admissible under each
    substitution --- are all one can state of a family of measures, whereas the multiplication of
    \Cref{prob:thm:mult} consumes (c), the existence of a single representative $Y$; and getting (c)
    from (a) and (b) is a measurable selection problem whose solvability we do not know,
    \Cref{sss:rem:curves-do-not-suffice}. Put differently, with two families the mixed data is
    \emph{forced} to be an internal hom, whereas the monad needs it to be a push-forward image. Over
    the present site (c) is simply a plot condition, and \Cref{prob:def:PrPf} takes it as the
    definition: the mixed family is ours to choose, and that is the whole of the difference.
\end{Rem}

\begin{Rem}[Flatness of the seed direction]
    \label{site:rem:flat}
    The seed component $\Phi$ of a $\Site$-morphism is required to be independent of $t$. This is
    the statement that $\Omega$ carries no smooth structure, and it is forced: were $\Phi$ allowed
    to depend on $t$, subject only to a measurability condition in $\omega$ --- there being no
    smooth structure on $\Omega$ to constrain it --- then $\Site$ would not be a category, because
    (M1) would not survive composition. For a witness take $m=n=k=1$, $g := \pr_\R$ and
    $\Phi(t,\omega) := \lp\tfrac{|t|}{1+|t|},0,0,\dots\rp \in \Omega$, each of whose sections at
    fixed $t$ is constant in $\omega$ and hence measurable, together with
    $g'(s,\omega') := \omega'_0$, the first coordinate, which is Borel; the composite has smooth
    part $g''(t,\omega) = |t|/(1+|t|)$, which is not differentiable at $t = 0$, violating (M1). It is also why
    $[0,1]^\N$ is an acceptable sample space although it is neither a manifold nor without
    boundary: nothing smooth is ever asked of it.
\end{Rem}

\begin{Rem}[What the site does \emph{not} have]
    \label{site:rem:no-locality}
    Three omissions are deliberate.
    \begin{enumerate}
        \item \emph{No open domains.} We do not include test objects $V\times\Omega$ for open
            $V \ins \R^n$. Unlike $\R^n$ these are not retracts of objects of $\Site$: a
            retraction $V\times\Omega\to\Omega_m\to V\times\Omega$, for any $m \ge 0$,
            restricts at each fixed seed to smooth maps exhibiting $V$ as a smooth retract of
            $\R^m$, and an annulus is a retract of no $\R^m$, since a retract of a contractible
            space has vanishing $H^1$. So
            this is a genuine restriction, not an application of \Cref{site:prp:karoubi}.
        \item \emph{No Grothendieck topology.} We use the trivial topology, for which every
            presheaf is a sheaf, so there is no sheaf condition and no gluing. Plot families are not
            required to be local in $t$.
        \item \emph{No patching in the seed direction.} As in \cite{For21} we do not require
            $\Xcal^{\Omega_n}$ to be closed under gluing along countable measurable partitions of
            $\Omega$; see \cite{HKSY17} for the variant that does.
    \end{enumerate}
    Items (1) and (2) go together and are discussed in \Cref{disc:rem:locality}: locality would buy
    gluing, which is what differential-geometric constructions on non-manifold objects need, and
    would cost the identification of $\PrPf(\Xcal)^{\Omega_n}$ with an image, which is what
    \Cref{sec:prob} needs.
\end{Rem}

\begin{Rem}[Relation to the literature]
    \label{site:rem:literature}
    Concrete sheaves on a concrete site are the standard machinery for convenient categories of
    smooth spaces, \cite{BH11,Dub79}, and \cite{MMS22} develops the $\omega$-concrete version with
    worked sites for standard Borel spaces, for Cartesian spaces and for piecewise-analytic
    domains. When \cite{MMS22} combines two such sites it does so by a \emph{sum}, identifying
    terminal and initial objects and adjoining constant maps between them; the resulting objects
    carry two independent plot families related only by constants, which is precisely the shape a
    presentation by curves and random variables would have. The present site is a \emph{product}:
    its
    objects mix the two directions, and by \Cref{site:rem:mixed} that is where all the difference
    lies. We are not aware of this instance in the literature, although the framework of
    \cite{BH11,Dub79,MMS22} accommodates it without modification.
\end{Rem}

\section{Sample-Smooth Spaces}
\label{sec:sss}

\subsection{Definition}
\label{sss:sec:def}

\begin{Def}[Sample-smooth space]
    \label{sss:def:sample-smooth}
    A \emph{sample-smooth space} is a set $\Xcal$ together with, for every $n \ge 0$, a set
    \begin{align}
        \Xcal^{\Omega_n} &\ins \Sets\lp\Omega_n,\ \Xcal\rp
    \end{align}
    of \emph{admissible plots}, such that
    \begin{enumerate}
        \item[(P1)] every constant map $\Omega_n\to\Xcal$ lies in $\Xcal^{\Omega_n}$;
        \item[(P2)] $p \in \Xcal^{\Omega_n}$ and $h \in \Site(\Omega_m,\Omega_n)$ imply
            $p \circ h \in \Xcal^{\Omega_m}$.
    \end{enumerate}
    A map $f:\,\Xcal\to\Ycal$ between sample-smooth spaces is \emph{sample-smooth} if
    $f\circ p \in \Ycal^{\Omega_n}$ for every $n$ and every $p \in \Xcal^{\Omega_n}$. The resulting category is
    $\SSS$.
\end{Def}

That is the whole definition. It has the same shape as \Cref{qus:def:qus} --- a set with families of
admissible maps out of test objects, closed under precomposition --- with the single test object
$\Omega$ replaced by the site $\Site$. The name follows the usage of \emph{sample-continuous} for a
process whose sample paths are continuous: a plot is smooth in the parameter for each fixed sample,
which is (M1) of \Cref{site:def:site}, and admissible in the sample for each fixed parameter, which
is (M2). The hyphen is adverbial --- \emph{smooth samplewise} --- and implies no order on the two
factors of $\Omega_n$; we write the smooth coordinate first, as in the customary $X_t(\omega)$.

\begin{Not}[Random variables, curves, the underlying quasi-universal space]
    \label{sss:not:derived}
    For $\Xcal \in \SSS$ we write
    \begin{align}
        \Xcal^\Omega &:= \Xcal^{\Omega_0}, &
        \Xcal^{\R^n} &:= \lC \gamma:\,\R^n\to\Xcal \st \gamma\circ\pr_{\R^n} \in \Xcal^{\Omega_n}\rC, &
        \Xcal^\R &:= \Xcal^{\R^1},
    \end{align}
    the \emph{admissible random variables}, the \emph{admissible $n$-parameter families} and the
    \emph{admissible curves}; by \Cref{site:prp:karoubi} these are the plot sets at the retracts
    $\R^n$ of $\Omega_n$, and $\Xcal^{\R^0} = \Xcal$. Throughout, $\R^n$ denotes the Cartesian space
    of \Cref{sss:eg:euclidean} when it is used as a shape and its underlying set, vector space or
    Borel space when it is not; the reading is always fixed by the position, and the one place where
    the difference is substantive --- the Borel $\sigma$-algebra $\Bcal^\circ_{\R^n}$ against the
    induced one $\Bcal_{\R^n}$ --- is marked by the superscript $\circ$. We write
    $\Und\Xcal := \lp\Xcal,\Xcal^\Omega\rp$, a quasi-universal space by (P1) and by closure under
    $\Xi^{(0)} = \Xi$, and $\Bcal_\Xcal := \Bcal_{\Und\Xcal}$ for its induced $\sigma$-algebra,
    \Cref{qus:def:induced-sigma}.
\end{Not}

\begin{Lem}[Curves are measurable, for free]
    \label{sss:lem:Q4}
    Let $\Xcal \in \SSS$ and $\gamma \in \Xcal^{\R^n}$, let
    $W \in \Meas\lp(\Omega,\Bcal_\Omega),(\R^n,\Bcal^\circ_{\R^n})\rp$
    and let $\Phi \in \Omega^\Omega$. Then $\lp\omega\mapsto\gamma(W(\omega))\rp \in \Xcal^\Omega$, and
    more generally $\lp\omega\mapsto p\lp W(\omega),\Phi(\omega)\rp\rp \in \Xcal^\Omega$ for every
    $p \in \Xcal^{\Omega_n}$.
    \begin{proof}
        The second statement is (P2) for the mixed morphism $\lb W,\Phi\rb$ of
        \Cref{site:not:morphisms}; the first is the second applied to
        $p := \gamma\circ\pr_{\R^n}$.
    \end{proof}
\end{Lem}

In a theory whose objects are pairs (random variables, curves), the statement of
\Cref{sss:lem:Q4} is an axiom --- it is the compatibility axiom linking the two families. Here it is
a one-line consequence, and this is the first of several places where the mixed site pays for
itself.

\begin{Lem}[Plots are homs]
    \label{sss:lem:plots-are-homs}
    Give $\Omega_n$ the plot structure $\Omega_n^{\Omega_m} := \Site(\Omega_m,\Omega_n)$. This is a
    sample-smooth space, and for every $\Xcal \in \SSS$
    \begin{align}
        \Xcal^{\Omega_n} &= \SSS\lp\Omega_n,\Xcal\rp, &
        \Xcal^{\R^n} &= \SSS\lp\R^n,\Xcal\rp,
    \end{align}
    the second with $\R^n$ the Cartesian space of \Cref{sss:eg:euclidean}. Moreover
    $p(-,\omega_0) \in \Xcal^{\R^n}$ for every $p \in \Xcal^{\Omega_n}$ and every
    $\omega_0 \in \Omega$.
    \begin{proof}
        (P1) holds because a constant map $\Omega_m\to\Omega_n$ is the $\Site$-morphism
        $(\mathrm{const}_{t_0},\mathrm{const}_{\omega_0})$, and (P2) is composition in $\Site$,
        \Cref{site:lem:category}. If $f \in \SSS(\Omega_n,\Xcal)$ then
        $f = f\circ\id_{\Omega_n} \in \Xcal^{\Omega_n}$, since $\id_{\Omega_n} \in \Omega_n^{\Omega_n}$; and if
        $f \in \Xcal^{\Omega_n}$ then $f\circ h \in \Xcal^{\Omega_m}$ for every $h \in \Omega_n^{\Omega_m}$ by (P2), so
        $f \in \SSS(\Omega_n,\Xcal)$.

        For the second equality, let $\gamma \in \Xcal^{\R^n}$, so
        $\gamma\circ\pr_{\R^n} \in \Xcal^{\Omega_n}$ by \Cref{sss:not:derived}. If
        $q \in (\R^n)^{\Omega_m} = \Cmix(\Omega_m,\R^n)$ then $(q,\id_\Omega)$ is a
        $\Site$-morphism $\Omega_m\to\Omega_n$ and
        $\gamma\circ q = (\gamma\circ\pr_{\R^n})\circ(q,\id_\Omega) \in \Xcal^{\Omega_m}$,
        so $\gamma \in \SSS(\R^n,\Xcal)$; conversely $\pr_{\R^n} \in (\R^n)^{\Omega_n}$, so a
        sample-smooth $\gamma$ has $\gamma\circ\pr_{\R^n} \in \Xcal^{\Omega_n}$. The last claim
        is (P2) applied to the $\Site$-morphism
        $(\pr_{\R^n},\mathrm{const}_{\omega_0}):\,\Omega_n\to\Omega_n$, whose seed part lies in
        $\Omega^\Omega$ by ($\Omega$1), followed by \Cref{sss:not:derived}.
    \end{proof}
\end{Lem}

\begin{Eg}[Cartesian spaces, and the point]
    \label{sss:eg:euclidean}
    $\R^k$ carries $\lp\R^k\rp^{\Omega_n} := \Cmix(\Omega_n,\R^k)$ of \Cref{site:def:site}, a plot
    structure by \Cref{site:lem:category}. Then $(\R^k)^\Omega = \Meas\lp(\Omega,\Bcal_\Omega),(\R^k,\Bcal^\circ_{\R^k})\rp$ is the set of
    \emph{universally measurable} maps $\Omega\to\R^k$ --- for which, by the argument of
    \Cref{qus:lem:borel-suffices}(1), it makes no difference whether one asks measurability into
    $\Bcal^\circ_{\R^k}$ or into its universal completion --- $(\R^k)^{\R^n} = C^\infty(\R^n,\R^k)$, and
    the induced $\sigma$-algebra $\Bcal_{\R^k}$ of \Cref{sss:not:derived} is the universal
    completion of $\Bcal^\circ_{\R^k}$: ``$\sni$'' because a universally measurable set has
    universally measurable preimage under a universally measurable map, and ``$\ins$'' because, by
    ($\Omega$5), there is a Borel isomorphism $j$ of $\R^k$ onto a Borel subset of $\Omega$; its
    inverse, extended by a constant, is a Borel and hence admissible $X \in (\R^k)^\Omega$, and an
    arbitrary Borel probability measure $\mu$ on $\R^k$ gives $P := j_*\mu \in \PrPfO$. For
    $A \in \Bcal_{\R^k}$ the set $X^{-1}(A)$ is $P$-measurable, and pulling a Borel sandwich back
    along $j$ exhibits $A$ as $\mu$-measurable. Note that the
    site could not have been phrased with $\Bcal_{\R^k}$ in place of $\Bcal^\circ_{\R^k}$: the
    induced $\sigma$-algebra is defined from the plot structure, which is defined from the site.
    We call $\R^k$ with this structure the \emph{Cartesian space} of dimension $k$. Further, $\one$ is the one-point space and $\zero$ the empty one.
\end{Eg}

\begin{Eg}[The three two-point spaces]
    \label{sss:eg:two}
    On $\lC0,1\rC$ we use
    \begin{align}
        \twoc^{\Omega_n} &:= \lC\text{constants}\rC, &
        \twob^{\Omega_n} &:= \lC \I_B \st B \in \Bcal_{\Omega_n} \rC, &
        \twop^{\Omega_n} &:= \Sets\lp\Omega_n,\lC0,1\rC\rp,
    \end{align}
    the \emph{constant}, the \emph{discrete} and the \emph{indiscrete} two-point space, where
    $\Bcal_{\Omega_n}$ is the induced $\sigma$-algebra of the object $\Omega_n$ of
    \Cref{sss:lem:plots-are-homs}. Each is a plot structure; for $\twob$ this is because a
    $\Site$-morphism is sample-smooth, hence measurable for the induced $\sigma$-algebras. Note that
    $\Bcal_{\Omega_n}$ is universally complete by \Cref{eq:induced-sigma-explicit} and contains the
    plain product $\sigma$-algebra $\Bcal^\circ_{\R^n}\otimes\Bcal_\Omega$ that
    \Cref{site:lem:category} delivers; the latter is used only as a generating family inside
    proofs.
\end{Eg}

\begin{Eg}[Rigid and loose spaces]
    \label{sss:eg:flat-sharp}
    For a quasi-universal space $\Scal$ put
    \begin{align}
        \lp\flat\Scal\rp^{\Omega_n} &:= \lC X \circ \pi_n \st X \in \Scal^\Omega\rC, &
        \lp\sharp\Scal\rp^{\Omega_n} &:= \lC p \st \forall h \in \Site(\Omega_0,\Omega_n).\ p \circ h \in
        \Scal^\Omega \rC .
    \end{align}
    Both are plot structures. $\flat\Scal$ has only the plots that ignore the smooth
    coordinate --- it is \emph{smoothly rigid}, $\flat\Scal^{\R^n} = $ constants --- and
    $\sharp\Scal$ has every plot that is compatible with the seed direction. These are the two
    modalities of \Cref{sec:modal}.
\end{Eg}

\begin{Eg}[Manifolds]
    \label{sss:eg:manifolds}
    A finite-dimensional smooth manifold $M$, second-countable and Hausdorff, carries
    \begin{align}
        M^{\Omega_n} &:= \Big\{\, p:\,\Omega_n\to M \ \Big|\ p(-,\omega) \in C^\infty(\R^n,M)
        \text{ for all } \omega, \nonumber \\
        &\qquad\qquad\qquad\qquad\qquad\ \
        p(t,-) \in \Meas\lp(\Omega,\Bcal_\Omega),(M,\Bcal_M)\rp \text{ for all } t \,\Big\},
        \label{eq:manifold-plots}
    \end{align}
    with $\Bcal_M$ the Borel $\sigma$-algebra. We refer to the two conditions as (M1$_M$) and
    (M2$_M$); for $M = \R^k$ they are (M1) and (M2) of \Cref{site:def:site}. This is a plot structure: (P1) is clear, and for (P2)
    argue as in \Cref{site:lem:category}, using \Cref{qus:lem:caratheodory} for the metrizable
    target $M$.
\end{Eg}

\subsection{The Abstract Criteria: Concrete Presheaves}
\label{sss:sec:quasitopos}

We now check that $\SSS$ is an instance of the standard machinery of \cite{BH11,Dub79,MMS22}, so
that the quasitopos property may be quoted rather than reproved. Throughout this subsection we use
the enlarged site $\Site_\pt$ of \Cref{site:def:site-plus}, which by \Cref{site:prp:karoubi} carries
the same structures.

\begin{Def}[Concrete site, concrete presheaf, \cite{BH11}]
    \label{sss:def:concrete-site}
    A \emph{concrete site} is a site $(\Ecal,J)$ such that
    \begin{enumerate}
        \item[(C1)] $\Ecal$ has a terminal object $\pt$;
        \item[(C2)] the functor $|\cdot| := \Ecal(\pt,-):\,\Ecal\to\Sets$ is faithful;
        \item[(C3)] for every covering family $\lC f_i:\,C_i\to C\rC_{i\in I} \in J(C)$ the maps
            $|f_i|$ are jointly surjective onto $|C|$.
    \end{enumerate}
    A presheaf $F$ on $\Ecal$ is \emph{concrete} if for every object $C$ the map
    \begin{align}
        F(C) &\longrightarrow \Sets\lp|C|,\ F(\pt)\rp, &
        u &\longmapsto \lp c \mapsto F(c)(u) \rp,
    \end{align}
    is injective. We write $\CPsh(\Ecal,J)$ for the full subcategory of concrete sheaves.
\end{Def}

\begin{Prp}[$\Site_\pt$ with the trivial topology is a concrete site]
    \label{sss:prp:concrete-site}
    Let $J_{\mathrm{triv}}$ be the trivial topology, whose only covering families are the singletons
    $\lC\id_C\rC$. Then $(\Site_\pt,J_{\mathrm{triv}})$ is a concrete site, every presheaf on it is
    a sheaf, and
    \begin{align}
        |\Omega_n| &= \R^n\times\Omega, & |\R^n| &= \R^n, & |\pt| &= \pt .
    \end{align}
    \begin{proof}
        (C1) $\pt = \R^0$ is terminal, \Cref{site:lem:products-plus}.

        (C2) $\Site_\pt(\pt,\Omega_n) = C^\infty(\R^0,\R^n)\times\Omega = \R^n\times\Omega$ and
        $\Site_\pt(\pt,\R^n) = \R^n$, and under these identifications $|f|$ is the underlying map of
        $f$. Since the morphisms of $\Site_\pt$ \emph{are} maps of the underlying sets, $|\cdot|$
        is faithful.

        (C3) is vacuous for singleton covers by identities, and for the same reason every presheaf
        satisfies the sheaf condition.
    \end{proof}
\end{Prp}

\begin{Prp}[$\SSS$ is the category of concrete presheaves]
    \label{sss:prp:sss-is-cpsh}
    $\SSS \cong \CPsh\lp\Site_\pt,J_{\mathrm{triv}}\rp$.
    \begin{proof}
        Let $F$ be a concrete presheaf, $\Xcal := F(\pt)$, and identify $F(C)$ with a subset of
        $\Sets(|C|,\Xcal)$ by (C2) and concreteness. Functoriality says exactly that these subsets
        are closed under precomposition with $\Site_\pt$-morphisms, and for $x \in \Xcal$ the
        element $F(!_C)(x) \in F(C)$ has underlying map $c \mapsto F(c)F(!_C)(x) = F(!_C\circ c)(x)
        = x$, the constant, so (P1) holds. Restricting to the objects $\Omega_n$ gives a
        sample-smooth space by \Cref{sss:def:sample-smooth}, and by \Cref{site:prp:karoubi} the values at
        $\R^n$ are recovered from it. Conversely a sample-smooth space extends uniquely to
        $\Site_\pt$ by \Cref{eq:karoubi}, and the extension is a concrete presheaf. Morphisms
        correspond on both sides to maps of underlying sets commuting with the structure.
    \end{proof}
\end{Prp}

\begin{Thm}[$\SSS$ is a quasitopos]
    \label{sss:thm:quasitopos}
    $\SSS$ is a concrete quasitopos: it is complete and cocomplete, cartesian closed and locally
    cartesian closed, and the indiscrete two-point space $\twop$ of \Cref{sss:eg:two} classifies
    strong monomorphisms. It is not a topos.
    \begin{proof}
        By \Cref{sss:prp:concrete-site,sss:prp:sss-is-cpsh}, $\SSS$ is the category of
        concrete sheaves on a concrete site, which is a quasitopos by \cite{BH11}; see also
        \cite{Dub79} and, for the version with additional structure, \cite{MMS22}. The explicit
        constructions, and direct verifications of completeness, cocompleteness, cartesian
        closedness and the classifier, are given in \Cref{sss:sec:constructions,sss:sec:cc}; \Cref{sss:cor:not-topos} shows it is not a topos.
    \end{proof}
\end{Thm}

We give the explicit constructions anyway, for three reasons: they are what one computes with; the
direct proofs are short; and the reader who wants to compare with \cite{For21} will find them line
for line the constructions there, with the single test object $\Omega$ replaced by $\Site$.

\subsection{Limits, Colimits, Subspaces and Quotients}
\label{sss:sec:constructions}

\begin{Lem}[Initial and final structures]
    \label{sss:lem:initial-final}
    Let $\Xcal$ be a set.
    \begin{enumerate}
        \item For any family of maps $f_j:\,\Xcal\to\Ycal_j$ into sample-smooth spaces, the families
            $\Xcal^{\Omega_n} := \lC p \st \forall j.\ f_j\circ p \in \Ycal_j^{\Omega_n}\rC$ form a sample-smooth
            structure, the \emph{initial} one: it is the largest for which all $f_j$ are
            sample-smooth, and a map $g:\,\Zcal\to\Xcal$ is sample-smooth iff every $f_j\circ g$ is.
        \item For any family of maps $e_j:\,\Wcal_j\to\Xcal$ out of sample-smooth spaces, the
            families $\Xcal^{\Omega_n} := \lC \text{constants}\rC \cup \bigcup_j e_j\circ\Wcal_j^{\Omega_n}$ form a
            sample-smooth structure, the \emph{final} one: it is the smallest for which all $e_j$ are
            sample-smooth, and $g:\,\Xcal\to\Zcal$ is sample-smooth iff every $g\circ e_j$ is.
    \end{enumerate}
    In particular $\SSS$ is topological over $\Sets$, hence complete and cocomplete, with limits and
    colimits computed on underlying sets and equipped with the initial resp.\ final structure.
    \begin{proof}
        Both families contain the constants and are closed under precomposition by (P2) for the
        $\Ycal_j$ resp.\ $\Wcal_j$; the universal properties are immediate from the definitions.
        Existence of initial structures for arbitrary sources is the definition of a topological
        category over $\Sets$, and such categories are complete and cocomplete with limits and
        colimits lifted from $\Sets$.
    \end{proof}
\end{Lem}

\begin{Prp}[Products and coproducts]
    \label{sss:prp:prod-coprod}
    Let $\lp\Xcal_i\rp_{i \in I}$ be sample-smooth spaces. Then
    \begin{align}
        \lp\textstyle\prod_i\Xcal_i\rp^{\Omega_n} &= \textstyle\prod_i \Xcal_i^{\Omega_n}, &
        \lp\textstyle\coprod_i\Xcal_i\rp^{\Omega_n} &= \textstyle\bigcup_i \iota_i \circ \Xcal_i^{\Omega_n},
    \end{align}
    the product taken on underlying sets. In particular a plot of a coproduct takes its values in a
    single summand, so $\one \sqcup \one = \twoc$ and not $\twob$: coproducts are \emph{rigid}.
    \begin{proof}
        The product carries the initial structure for the projections, which is the displayed
        family; the coproduct carries the final structure for the injections, which is the displayed
        family since the constants are already contained in it (for $I \ne \emptyset$).
    \end{proof}
\end{Prp}

\begin{Rem}[A plot cannot jump]
    \label{sss:rem:no-jump}
    \Cref{sss:prp:prod-coprod} says more than the corresponding statement for curves alone, namely
    that a smooth curve into a coproduct stays in one summand, which is a connectedness fact about
    $\R$. Here it also says that a \emph{random} plot cannot jump: a map
    $\Omega_n\to\Xcal_1\sqcup\Xcal_2$ that is a plot must land in one summand for all
    $(t,\omega)$ at once, not merely for each $\omega$ separately. This is the rigidity of
    \cite{For21} on the $\Omega$-side and of connectedness on the $\R$-side, in one statement, and
    it is why we do not impose patching, \Cref{site:rem:no-locality}(3).
\end{Rem}

\begin{Def}[Embeddings and quotients]
    \label{sss:def:emb-quot}
    A sample-smooth $f:\,\Xcal\to\Ycal$ is an \emph{embedding} if it is injective and $\Xcal$ carries
    the initial structure, $\Xcal^{\Omega_n} = \lC p \st f \circ p \in \Ycal^{\Omega_n}\rC$, and a \emph{quotient}
    if it is surjective and $\Ycal$ carries the final structure,
    $\Ycal^{\Omega_n} = \lC\text{constants}\rC\cup f\circ\Xcal^{\Omega_n}$.
\end{Def}

\begin{Prp}[Equalizers, coequalizers, monos, epis]
    \label{sss:prp:eq-coeq}
    Let $f,g:\,\Xcal\to\Ycal$ be sample-smooth. The equalizer of $(f,g)$ is the set
    $\lC x \st f(x) = g(x)\rC$ with the subspace structure of \Cref{sss:def:emb-quot}, and the
    coequalizer is the set-theoretic coequalizer with the final structure. Monomorphisms are exactly
    the injective sample-smooth maps and epimorphisms exactly the surjective ones; the strong
    monomorphisms are exactly the embeddings and the strong epimorphisms exactly the quotients.
    \begin{proof}
        The first two are \Cref{sss:lem:initial-final}. A sample-smooth map is monic iff it is
        injective, because $\one$ is a generator: two maps $\Zcal\to\Xcal$ agreeing after $f$ agree
        pointwise. It is epic iff it is surjective, because $\twop$ is a cogenerator: every map of
        underlying sets into $\twop$ is sample-smooth, so if $f:\,\Xcal\to\Ycal$ is not surjective
        then $\I_{f(\Xcal)}$ and $\mathrm{const}_1$ are distinct morphisms $\Ycal\to\twop$
        agreeing after $f$; the converse is immediate on underlying maps.

        A strong mono is a mono right-orthogonal to all epis. Let $m:\,\Xcal\to\Ycal$ be a strong
        mono, write $\Xcal'$ for the underlying set of $\Xcal$ with the initial structure along $m$,
        and note that $\id:\,\Xcal\to\Xcal'$ is sample-smooth --- since
        $\Xcal^{\Omega_n}\ins\Xcal'^{\Omega_n}$ --- and epic, being surjective. Orthogonality in
        the square $m\circ\id = m$ produces a sample-smooth lift $\Xcal'\to\Xcal$ over the
        identity, whence $\Xcal'^{\Omega_n}\ins\Xcal^{\Omega_n}$ and $m$ is an embedding.
        Conversely an embedding has the lifting property, because a lift exists on underlying sets
        (the epi being surjective, the mono injective) and is sample-smooth by initiality.

        Dually, a quotient is left-orthogonal to every mono, the lift existing on underlying sets
        and being sample-smooth by finality, so quotients are strong epis; and if $f$ is a strong
        epi then, writing $\Ycal''$ for the underlying set of $\Ycal$ with the final structure
        along $f$, the map $\id:\,\Ycal''\to\Ycal$ is a mono, and orthogonality in
        $\id\circ f = f$ gives a sample-smooth $\Ycal\to\Ycal''$ over the identity, so
        $\Ycal^{\Omega_n}\ins\Ycal''^{\Omega_n}$ and $f$ is a quotient.
    \end{proof}
\end{Prp}

\begin{Prp}[$\twop$ classifies embeddings]
    \label{sss:prp:classifier}
    Let $\top:\,\one\to\twop$ pick out $1$. For every embedding $m:\,\Xcal\inj\Ycal$ there is a
    unique sample-smooth $\chi_m:\,\Ycal\to\twop$ making
    \begin{align}
        \begin{array}{ccc}
            \Xcal & \longrightarrow & \one \\
            \downarrow m & & \downarrow \top \\
            \Ycal & \stackrel{\chi_m}{\longrightarrow} & \twop
        \end{array}
    \end{align}
    a pullback, namely the indicator of $m(\Xcal)$.
    \begin{proof}
        Every map into $\twop$ is sample-smooth, since $\twop^{\Omega_n}$ is all maps; so $\chi_m$ is a
        morphism and is unique as a map of sets. The pullback of $\top$ along $\chi_m$ is
        $\chi_m^{-1}(1) = m(\Xcal)$ with the initial structure, \Cref{sss:lem:initial-final}(1),
        which is $\Xcal$ precisely because $m$ is an embedding.
    \end{proof}
\end{Prp}

\begin{Cor}[Not a topos]
    \label{sss:cor:not-topos}
    $\SSS$ is not a topos. Indeed the identity of $\lC0,1\rC$ is a monomorphism
    $\twoc \to \twob$ which is not an embedding: the initial structure it induces on the source is
    $\twob$, whereas $\twoc^\Omega$ consists of the constants alone, and the two differ.
\end{Cor}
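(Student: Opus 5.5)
The plan is to use the standard fact that in a topos every monomorphism is regular, hence strong. It then suffices to exhibit a single monomorphism of $\SSS$ that is not strong. By \Cref{sss:prp:eq-coeq} the monomorphisms of $\SSS$ are exactly the injective sample-smooth maps and the strong monomorphisms are exactly the embeddings. So I only need an injective sample-smooth map whose source does not carry the initial structure.

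First I check that the identity of $\lC0,1\rC$ is a morphism $\twoc\to\twob$. The plots of $\twoc$ are the constants, and every constant is an indicator $\I_B$ with $B=\emptyset$ or $B=\Omega_n$. Both of these sets lie in $\Bcal_{\Omega_n}$, which is a $\sigma$-algebra by \Cref{qus:def:induced-sigma}. So the identity carries plots of $\twoc$ to plots of $\twob$. It is injective, hence a monomorphism.

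Next I compute the initial structure along this map, using \Cref{sss:lem:initial-final}(1): it is $\lC p \st \id\circ p\in\twob^{\Omega_n}\rC=\twob^{\Omega_n}$. It remains to see that $\twob^{\Omega_0}=\twob^\Omega$ contains a non-constant map, so that it differs from $\twoc^\Omega$. For this I take $B:=\lC\omega\st\omega_0<\tfrac12\rC$, which is a Borel subset of the cube and hence lies in $\Bcal_\Omega$. It also lies in the induced $\sigma$-algebra of the object $\Omega_0$ of \Cref{sss:lem:plots-are-homs}, because every plot $\Phi\in\Omega^\Omega$ has $\Phi^{-1}(B)\in\Bcal_\Omega$. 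The set $B$ is neither empty nor all of $\Omega$, so $\I_B$ is a non-constant plot of $\twob$ that is not a plot of $\twoc$. Therefore the identity $\twoc\to\twob$ is not an embedding, hence not a strong monomorphism, and $\SSS$ is not a topos.

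There is no real obstacle. The only point needing care is the appeal to ``strong mono $=$ embedding'' from \Cref{sss:prp:eq-coeq}, together with the topos fact that all monos are strong. If one prefers to avoid that appeal, there is an alternative route. The same identity map is surjective, hence epic by \Cref{sss:prp:eq-coeq}, and it is monic but not an isomorphism, since its inverse $\twob\to\twoc$ sends the plot $\I_B$ to a non-plot. This contradicts the fact that every topos is balanced.
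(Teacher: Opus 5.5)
Your proof is correct and is the paper's argument. The identity $\twoc\to\twob$ is a monomorphism that is not an embedding, hence not strong by \Cref{sss:prp:eq-coeq}, which cannot happen in a topos since there every mono is regular; you make explicit the topos fact and the non-constant plot $\I_B$ that the paper leaves implicit, and your balancedness variant (a bijective monic epi that is not an isomorphism) is an equally valid alternative.
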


\subsection{Function Spaces, Fibre Products and Internal Homs}
\label{sss:sec:cc}

\begin{Def}[The function space]
    \label{sss:def:function-space}
    For $\Xcal,\Ycal \in \SSS$ let $\Ycal^\Xcal := \SSS(\Xcal,\Ycal)$ with
    \begin{align}
        \lp\Ycal^\Xcal\rp^{\Omega_n} &:= \Big\{\, F:\,\Omega_n \to \Ycal^\Xcal \ \Big|\
        \forall m\ \forall h \in \Site(\Omega_m,\Omega_n)\ \forall q \in \Xcal^{\Omega_m}: \nonumber \\
        &\qquad\qquad\qquad\qquad\qquad\qquad\quad
        \lp u \mapsto F(h(u))\lp q(u)\rp\rp \in \Ycal^{\Omega_m} \,\Big\} ,
        \label{eq:function-space}
    \end{align}
    where $u$ ranges over $\Omega_m$.
\end{Def}

\begin{Lem}[The function space is well defined]
    \label{sss:lem:function-space-wd}
    \Cref{eq:function-space} defines a sample-smooth structure. Moreover, if one starts from maps
    $F:\,\Omega_n\to\Sets(\Xcal,\Ycal)$ and imposes \Cref{eq:function-space}, then every
    value $F(v)$ is automatically sample-smooth, so no separate hypothesis is needed.
    \begin{proof}
        (P1): for constant $F \equiv f$ with $f$ sample-smooth, $u \mapsto f(q(u))$ lies in
        $\Ycal^{\Omega_m}$. (P2): for $h_0 \in \Site(\Omega_k,\Omega_n)$ the condition for $F\circ h_0$
        quantifies over $h_0\circ h$, which ranges inside $\Site(\Omega_m,\Omega_n)$.

        For the last claim fix $v \in \Omega_n$ and take $h$ to be the constant morphism
        $\Omega_m\to\Omega_n$ at $v$, which lies in $\Site$ by \Cref{sss:lem:plots-are-homs}. Then
        $u \mapsto F(v)(q(u))$ lies in $\Ycal^{\Omega_m}$ for every $q \in \Xcal^{\Omega_m}$, i.e.\ $F(v)$ is
        sample-smooth.
    \end{proof}
\end{Lem}

\begin{Thm}[Cartesian closedness]
    \label{sss:thm:cartesian-closed}
    $\SSS$ is cartesian closed: evaluation $\ev:\,\Ycal^\Xcal\times\Xcal\to\Ycal$ is sample-smooth
    and for every $\Zcal$ the map
    \begin{align}
        \cur:\; \SSS\lp\Zcal\times\Xcal,\ \Ycal\rp &\bij \SSS\lp\Zcal,\ \Ycal^\Xcal\rp, &
        \cur(f)(z)(x) &= f(z,x),
    \end{align}
    is a bijection, natural in all three variables.
    \begin{proof}
        \emph{Evaluation.} A plot of $\Ycal^\Xcal\times\Xcal$ on $\Omega_m$ is a pair $(F,q)$ with
        $F \in (\Ycal^\Xcal)^{\Omega_m}$ and $q \in \Xcal^{\Omega_m}$, by \Cref{sss:prp:prod-coprod}. Taking
        $h := \id_{\Omega_m}$ in \Cref{eq:function-space} gives
        $u\mapsto F(u)(q(u)) \in \Ycal^{\Omega_m}$, which is $\ev\circ(F,q)$.

        \emph{$\cur$ is well defined.} Let $f \in \SSS(\Zcal\times\Xcal,\Ycal)$ and
        $r \in \Zcal^{\Omega_n}$. For $h \in \Site(\Omega_m,\Omega_n)$ and $q \in \Xcal^{\Omega_m}$ the pair
        $(r\circ h,\,q)$ is a plot of $\Zcal\times\Xcal$ on $\Omega_m$, so
        $u\mapsto f\lp r(h(u)),q(u)\rp$ lies in $\Ycal^{\Omega_m}$; this is \Cref{eq:function-space} for
        $\cur(f)\circ r$, so $\cur(f)$ is sample-smooth.

        \emph{$\cur$ is bijective.} Its inverse is $g \mapsto \unc(g)$, $\unc(g)(z,x) := g(z)(x)$,
        which is sample-smooth by the evaluation step applied to $(g\circ r,q)$. The two
        constructions are mutually inverse on underlying maps.
    \end{proof}
\end{Thm}

\begin{Prp}[Fibre products]
    \label{sss:prp:fibre}
    For sample-smooth $f:\,\Xcal\to\Zcal$ and $g:\,\Ycal\to\Zcal$ the fibre product
    $\Xcal\times_\Zcal\Ycal$ is the set-theoretic one with the subspace structure inherited from
    $\Xcal\times\Ycal$, i.e.
    \begin{align}
        \lp\Xcal\times_\Zcal\Ycal\rp^{\Omega_n} &= \lC (p,q) \in \Xcal^{\Omega_n}\times
        \Ycal^{\Omega_n} \st f\circ p = g\circ q \rC .
    \end{align}
    \begin{proof}
        \Cref{sss:lem:initial-final}(1) applied to the equalizer of $f\circ\pr_\Xcal$ and
        $g\circ\pr_\Ycal$ inside $\Xcal\times\Ycal$.
    \end{proof}
\end{Prp}

\begin{Prp}[Internal homs over a base]
    \label{sss:prp:internal-hom}
    Let $f:\,\Xcal\to\Zcal$ and $g:\,\Ycal\to\Zcal$ be sample-smooth, and give each fibre
    $f^{-1}(z)$, $g^{-1}(z)$ its subspace structure. Put
    \begin{align}
        \ihom_\Zcal(f,g) &:= \bigsqcup_{z \in \Zcal} \SSS\lp f^{-1}(z),\ g^{-1}(z)\rp,
    \end{align}
    with $\varpi:\,\ihom_\Zcal(f,g) \to \Zcal$ sending the summand at $z$ to $z$, and let
    \begin{align}
        \ihom_\Zcal(f,g)^{\Omega_n} &:= \Big\{\, F \ \Big|\ \varpi\circ F \in \Zcal^{\Omega_n}
        \text{ and, for all } m,\ h \in \Site(\Omega_m,\Omega_n) \text{ and} \nonumber \\
        &\qquad q \in \Xcal^{\Omega_m} \text{ with } f\circ q = \varpi\circ F\circ h, \quad
        \lp u \mapsto F(h(u))\lp q(u)\rp\rp \in \Ycal^{\Omega_m} \,\Big\} .
        \label{eq:internal-hom}
    \end{align}
    Then $\ihom_\Zcal(f,g)$ is the exponential $g^f$ in the slice category $\SSS_{/\Zcal}$. In
    particular $\SSS$ is locally cartesian closed.
    \begin{proof}
        \Cref{eq:internal-hom} is a sample-smooth structure, and the adjunction
        \begin{align}
            \SSS_{/\Zcal}\lp e \times_\Zcal f,\ g\rp &\cong
            \SSS_{/\Zcal}\lp e,\ \ihom_\Zcal(f,g)\rp
        \end{align}
        is verified exactly as in \Cref{sss:thm:cartesian-closed}, every step read over $\Zcal$
        and the fibre products computed by \Cref{sss:prp:fibre}; for $\Zcal = \one$ it \emph{is}
        \Cref{sss:thm:cartesian-closed}. Local cartesian closedness is also part of
        \Cref{sss:thm:quasitopos}, and we record the formula because it is what one computes with.
    \end{proof}
\end{Prp}

\subsection{Correctness on Cartesian Spaces and Manifolds}
\label{sss:sec:correctness}

\begin{Thm}[Euclidean correctness]
    \label{sss:thm:euclidean}
    $\SSS\lp\R^k,\R^l\rp = C^\infty\lp\R^k,\R^l\rp$, and more generally
    $\lp\R^l\rp^{\R^k} = C^\infty(\R^k,\R^l)$.
    \begin{proof}
        A $C^\infty$ map $f$ preserves plots: if $p \in \lp\R^k\rp^{\Omega_n}$ then $f\circ p$ satisfies (M1) by
        the chain rule and (M2) because $f$ is Borel. Conversely let $f$ be sample-smooth. The map
        $\mathrm{pr}:\,(t,\omega)\mapsto t$ lies in $\lp\R^k\rp^{\Omega_k}$, so $f\circ\mathrm{pr} \in \lp\R^l\rp^{\Omega_k}$,
        which by (M1) says that $t \mapsto f(t)$ is $C^\infty$. The second claim is the same
        argument.
    \end{proof}
\end{Thm}

\begin{Rem}[No Boman theorem is needed]
    \label{sss:rem:no-boman}
    In a theory whose smooth test object is the line, \Cref{sss:thm:euclidean} is exactly Boman's
    theorem \cite{Bom67}: a map $\R^k\to\R^l$ taking smooth curves to smooth curves is $C^\infty$.
    Here it is immediate, because the site contains $\Omega_k$ and hence the plot $(t,\omega)\mapsto
    t$, which is the identity of $\R^k$ in disguise. The same remark applies to
    \Cref{sss:thm:manifolds}. This is a genuine simplification of the foundations, and it is bought
    with the loss recorded in \Cref{sss:rem:curves-do-not-suffice}.
\end{Rem}

\begin{Thm}[Manifolds embed]
    \label{sss:thm:manifolds}
    With the structure of \Cref{sss:eg:manifolds}, the assignment $M \mapsto M$ is a full and
    faithful functor from the category of finite-dimensional second-countable Hausdorff smooth
    manifolds and smooth maps into $\SSS$, and it preserves finite products.
    \begin{proof}
        Faithfulness is clear, morphisms being maps of underlying sets. If $f:\,M\to N$ is smooth
        then it preserves plots, by the chain rule and by Borel measurability of $f$. Conversely
        let $f$ be sample-smooth and let $x \in M$. Choose a chart around $x$ whose image is an open
        ball and compose with a diffeomorphism $\R^m \bij$ ball, giving
        $\varphi \in C^\infty(\R^m,M)$ open onto its image with $x$ in that image. Then
        $(t,\omega)\mapsto\varphi(t)$ lies in $M^{\Omega_m}$, so $f\circ\varphi \in N^{\Omega_m}$, whose (M1$_N$) says
        $f\circ\varphi \in C^\infty(\R^m,N)$; as $\varphi$ is a diffeomorphism onto an open
        neighbourhood of $x$, $f$ is smooth near $x$. Products: the plot family of $M\times N$ in
        the sense of \Cref{sss:eg:manifolds} is the product family of
        \Cref{sss:prp:prod-coprod}, since smoothness and measurability into a product are
        componentwise.
    \end{proof}
\end{Thm}

\begin{Prp}[The underlying diffeological space]
    \label{sss:prp:diffeology}
    For $\Xcal \in \SSS$ let $\mathrm{Dfg}(\Xcal)$ be the set $\Xcal$ with, for each $n \ge 0$ and
    each open $V \ins \R^n$, the plots
    \begin{align}
        \mathrm{Dfg}(\Xcal)_V &:= \Big\{\, p:\,V\to\Xcal \ \Big|\ \text{every point of } V
        \text{ has a neighbourhood } V' \nonumber \\
        &\qquad\qquad\qquad\qquad \text{on which } p \text{ factors as } \gamma\circ\varphi
        \,\Big\},
    \end{align}
    the factorisation running over $m \ge 0$, $\gamma \in \Xcal^{\R^m}$ and
    $\varphi \in C^\infty(V',\R^m)$. Then $\mathrm{Dfg}(\Xcal)$ is a diffeological space in the
    sense of \cite{Sou80,IZ13}, $\mathrm{Dfg}$ is a functor $\SSS\to\Diff$, and for a manifold $M$
    as in \Cref{sss:eg:manifolds} the diffeology $\mathrm{Dfg}(M)$ is the usual one, consisting of
    all smooth maps $V \to M$.
    \begin{proof}
        Constants factor with $m := 0$. If $\psi \in C^\infty(V'',V)$ and $p$ is a plot then
        $p\circ\psi$ is one, since locally $p\circ\psi = \gamma\circ(\varphi\circ\psi)$. The sheaf
        condition holds because the defining property is local by construction. For functoriality,
        a sample-smooth $f$ sends $\Xcal^{\R^m}$ into $\Ycal^{\R^m}$, so it sends a local
        factorisation $\gamma\circ\varphi$ to $(f\circ\gamma)\circ\varphi$.

        For a manifold, $M^{\R^m} = C^\infty(\R^m,M)$ by \Cref{sss:thm:manifolds}, so every
        $\mathrm{Dfg}(M)$-plot is locally smooth, hence smooth. Conversely let $p \in C^\infty(V,M)$
        and $x \in V$; choose a ball $V' \ni x$ inside $V$ and a diffeomorphism
        $\varphi:\,V' \bij \R^n$, and put $\gamma := p|_{V'}\circ\varphi^{-1} \in C^\infty(\R^n,M)$.
        Then $p|_{V'} = \gamma\circ\varphi$.
    \end{proof}
\end{Prp}

\begin{Rem}[There is no canonical functor back]
    \label{sss:rem:diffeology}
    \Cref{sss:prp:diffeology} goes one way only, and the reason is the subject of this paper. A
    diffeological space carries no admissible random variables, so producing a sample-smooth space
    from one requires supplying them; and once they are supplied, the mixed plots are \emph{still} a
    further choice, by \Cref{site:rem:mixed}. The supplied data must first be compatible in the
    sense of \Cref{sss:lem:Q4}, that $\gamma\circ W \in \Xcal^\Omega$ for every global plot
    $\gamma$ of the diffeology and every measurable $W$; without that the two kinds of datum do not
    fit together at all, since (P2) applied to $\lb W,\Phi\rb$ forces $\gamma\circ W$ into the
    random variables. Given compatibility, the generated structure --- the smallest plot family
    containing $\lC\gamma\circ\pr_{\R^m}\rC \cup \lC X\circ\pi_n\rC$, for $\gamma$ a global
    plot and $X$ a supplied random variable, and closed under $\Site$ --- is the smallest choice and
    \Cref{eq:maximal-extension} the largest; whether the two ends of that interval can differ is
    \Cref{sss:rem:curves-do-not-suffice}. Note also that $\mathrm{Dfg}$ forgets in the
    smooth direction as well, since a diffeology is determined by its plots on all open $V$ whereas
    a sample-smooth space knows only the global ones; that is the locality discussed in
    \Cref{site:rem:no-locality,disc:rem:locality}.
\end{Rem}

\begin{Rem}[Curves no longer suffice]
    \label{sss:rem:curves-do-not-suffice}
A sample-smooth space is not \emph{presented} by $\lp\Xcal^\Omega,\Xcal^\R\rp$: the plot
    families at $\Omega_n$, $n \ge 1$, are separate data rather than something computed from the
    random variables and the curves. What they are constrained by is one inclusion. The largest
    structure with given $\Xcal^\Omega$ and $\Xcal^{\R^n}$ is
    \begin{align}
        \widehat\Xcal^{\Omega_n} &:= \lC p \st p(-,\omega) \in \Xcal^{\R^n}\ \forall \omega,\ \text{ and }\
        p \circ h \in \Xcal^\Omega\ \forall h \in \Site(\Omega_0,\Omega_n) \rC ,
        \label{eq:maximal-extension}
    \end{align}
    --- that this is the largest such structure is immediate, since by
    \Cref{sss:lem:plots-are-homs} every $p \in \Xcal^{\Omega_n}$ satisfies both conditions ---
    and $\Xcal^{\Omega_n} \ins \widehat\Xcal^{\Omega_n}$ may in principle be strict. This is the
    price of \Cref{site:rem:mixed}, and it is not a defect to be repaired: it is exactly the room in
    which \Cref{prob:def:PrPf} is made. \Cref{prob:rem:comparison} identifies the question of
    equality in \Cref{eq:maximal-extension} for $\Xcal := \PrPf(\Ycal)$ as a measurable selection
    problem.

    Three things should be said precisely. First, $\widehat\Xcal$ is itself an object of $\SSS$:
    \Cref{eq:maximal-extension} contains the constants and is closed under precomposition, because
    $\Xcal^{\R^n}$ is closed under precomposition with smooth maps by \Cref{site:prp:karoubi} and
    because $h_0\circ h$ again lies in $\Site(\Omega_0,\Omega_n)$. Second, $\widehat\Xcal$ has the
    same underlying set, the same random variables and the same $n$-parameter families as $\Xcal$
    --- see the proof of \Cref{sss:prp:reflection}(1) --- and the identity is a sample-smooth
    map $\Xcal\to\widehat\Xcal$; so the comparison is between
    two genuine objects, and it is an isomorphism exactly when \Cref{eq:maximal-extension} is an
    equality. Third, and honestly: we know of \emph{no} $\Xcal$ for which the inclusion is strict.
    It is an equality for $\R^k$ and for manifolds, where the first condition of
    \Cref{eq:maximal-extension} is the smoothness condition defining the plots and the second, taken
    at $h := \iota^n_t$, is the measurability condition; for
    $\flat\Scal$, whose plots are the $t$-independent ones on both sides; and for $\sharp\Scal$,
    where both sides are defined by the same substitution condition; and it is inherited by
    products, both conditions being componentwise. Whether it can fail at all --- and in particular
    whether it fails at $\PrPf(\Ycal)$, which is the question of \Cref{prob:rem:comparison} --- we
    do not know. Until that is settled one cannot rule out that every object of $\SSS$ is after all
    recovered from its random variables together with its $n$-parameter families --- which is less
    than being recovered from its \emph{curves}, since we have not shown that $\Xcal^{\R^n}$ is
    determined by $\Xcal^\R$ either.
\end{Rem}

The comparison of \Cref{eq:maximal-extension} is not merely an inclusion of sets: it is a
reflection, and saying so turns the informal phrase ``presented by its curves and its random
variables'' into the name of a subcategory.

\begin{Prp}[The maximal extension is a reflection]
    \label{sss:prp:reflection}
    The assignment $\Xcal \mapsto \widehat\Xcal$ of \Cref{eq:maximal-extension} extends to a
    functor $\SSS\to\SSS$ which is the identity on underlying sets and on maps, and:
    \begin{enumerate}
        \item $\widehat{\widehat\Xcal} = \widehat\Xcal$, and the identity
            $\eta_\Xcal:\,\Xcal\to\widehat\Xcal$ is a natural transformation
            $\id_\SSS\Rightarrow\widehat{(-)}$;
        \item writing $\SSS_{\mathrm{pr}}$ for the full subcategory of the \emph{presented}
            objects, those with $\Xcal = \widehat\Xcal$, the functor $\widehat{(-)}$ is left
            adjoint to the inclusion $\SSS_{\mathrm{pr}}\ins\SSS$; that is,
            $\SSS_{\mathrm{pr}}$ is a reflective subcategory and $\widehat{(-)}$ its reflector,
            with $\eta$ the unit;
        \item $\SSS_{\mathrm{pr}}$ contains $\R^k$, every manifold, every rigid and every loose
            object, and is closed under products.
    \end{enumerate}
    An object lies in $\SSS_{\mathrm{pr}}$ exactly when its mixed plots are determined by its
    random variables together with its $n$-parameter families, so $\SSS_{\mathrm{pr}}$ is the
    category a presentation by those two towers would give --- which is less than a presentation by
    curves, as the last sentence of \Cref{sss:rem:curves-do-not-suffice} notes; and
    \Cref{prob:rem:comparison} is the question whether $\PrPf$ lands in it.
    \begin{proof}
        Let $f \in \SSS(\Xcal,\Ycal)$. Then $f$ carries $\Xcal^{\R^n}$ into $\Ycal^{\R^n}$,
        since $\gamma\circ\pr_{\R^n} \in \Xcal^{\Omega_n}$ gives
        $f\circ\gamma\circ\pr_{\R^n} \in \Ycal^{\Omega_n}$, and it carries $\Xcal^\Omega$
        into $\Ycal^\Omega$. So for $p \in \widehat\Xcal^{\Omega_n}$ both defining conditions
        of \Cref{eq:maximal-extension} pass to $f\circ p$, and
        $f \in \SSS(\widehat\Xcal,\widehat\Ycal)$: the assignment is functorial. It is the
        identity on underlying sets by construction, so naturality of $\eta$ is the commuting of a
        square of identities, and $\eta_\Xcal$ is sample-smooth because
        $\Xcal^{\Omega_n} \ins \widehat\Xcal^{\Omega_n}$.

        (1) $\widehat\Xcal$ has the same random variables and the same $n$-parameter families as
        $\Xcal$. For the first, take $h := \id_\Omega \in \Site(\Omega_0,\Omega_0)$ in the
        second condition of \Cref{eq:maximal-extension}, giving
        $\widehat\Xcal^\Omega \ins \Xcal^\Omega$, the reverse inclusion being
        $\Xcal^{\Omega_n}\ins\widehat\Xcal^{\Omega_n}$ at $n=0$. For the second, if
        $\gamma\circ\pr_{\R^n} \in \widehat\Xcal^{\Omega_n}$ then the first condition at any
        $\omega$ gives $\gamma \in \Xcal^{\R^n}$; again the reverse inclusion is immediate. Since
        \Cref{eq:maximal-extension} depends on $\Xcal$ only through those two,
        $\widehat{\widehat\Xcal} = \widehat\Xcal$.

        (2) Let $\Ycal = \widehat\Ycal$ and let $f \in \SSS(\Xcal,\Ycal)$; we must show that
        the same map of sets lies in $\SSS(\widehat\Xcal,\Ycal)$, and uniqueness is then
        automatic, $\eta_\Xcal$ being the identity on underlying sets. Let
        $p \in \widehat\Xcal^{\Omega_n}$. For each $\omega$ we have
        $p(-,\omega) \in \Xcal^{\R^n}$, hence $f\circ p(-,\omega) \in \Ycal^{\R^n}$; and for
        each $h \in \Site(\Omega_0,\Omega_n)$ we have $p\circ h \in \Xcal^\Omega$, hence
        $f\circ p\circ h \in \Ycal^\Omega$. So $f\circ p \in \widehat\Ycal^{\Omega_n} =
        \Ycal^{\Omega_n}$.

        (3) is the list of equalities established in
        \Cref{sss:rem:curves-do-not-suffice}.
    \end{proof}
\end{Prp}

\begin{Rem}[What the reflection is good for]
    \label{sss:rem:reflection}
    Three things. It names the comparison: the whole difference between the present category and one
    presented by $n$-parameter families and random variables is the difference between $\SSS$ and
    $\SSS_{\mathrm{pr}}$, and \Cref{prob:rem:comparison} asks whether that difference is visible
    at $\PrPf(\Ycal)$ --- indeed whether it is visible anywhere. It sorts out the two fallbacks
    alluded to in \Cref{disc:sec:related}: on $\SSS_{\mathrm{pr}}$ one has the endofunctor
    $\widehat{\PrPf(-)}$, whose multiplication has no reason to survive the reflection, and one has
    $\PrPf$ itself read as a functor $\SSS_{\mathrm{pr}}\to\SSS$, which is a monad
    \emph{relative} to the inclusion \cite{ACU15,Man76} --- automatically so, the inclusion being
    fully faithful and $\PrPf$ a monad on $\SSS$. And it locates the cost: $\SSS_{\mathrm{pr}}$, being
    reflective, inherits limits from $\SSS$ but computes colimits by reflecting them, so the two
    categories agree on everything in \Cref{sec:sss} that is built from limits and differ at most
    on quotients --- of which \Cref{prob:def:PrPf} is one.
\end{Rem}

\subsection{Tangent and Cotangent Spaces, Differentials and Jacobians}
\label{sss:sec:tangent}

Every object of $\SSS$ has tangent and cotangent spaces, defined from curves in the usual kinematic
way, and on Cartesian spaces and manifolds they are the classical ones. Nothing in this subsection is
used later; it is here because a category advertised for differentiable programming should be shown
to differentiate.

\begin{Def}[Tangent bundle]
    \label{tan:def:tangent}
    Let $\Xcal \in \SSS$ and let $\Xcal^{\R} = \SSS(\R,\Xcal)$ be the function space of
    \Cref{sss:def:function-space}, whose underlying set is the set of admissible curves. Define an
    equivalence relation on it by
    \begin{align}
        \gamma \sim \eta \quad:\Longleftrightarrow\quad \gamma(0) = \eta(0) \ \text{ and }\
        \lp f\circ\gamma\rp'(0) = \lp f\circ\eta\rp'(0) \ \text{ for all } f \in \SSS(\Xcal,\R),
        \label{eq:tangent-equiv}
    \end{align}
    which makes sense because $f\circ\gamma \in \R^{\R} = C^\infty(\R,\R)$ by
    \Cref{sss:thm:euclidean}. Put $T\Xcal := \Xcal^{\R}/\!\sim$ with the quotient structure of
    \Cref{sss:def:emb-quot}, write $[\gamma]$ for the class of $\gamma$, and let
    $\pi_\Xcal:\,T\Xcal\to\Xcal$, $[\gamma]\mapsto\gamma(0)$. The \emph{tangent space at $x$} is the
    subobject $T_x\Xcal := \pi_\Xcal^{-1}(x)$.
\end{Def}

\begin{Prp}[$T$ is a functor with a scalar action]
    \label{tan:prp:functor}
    $\pi_\Xcal$ is sample-smooth. For sample-smooth $f:\,\Xcal\to\Ycal$ the assignment
    $Tf[\gamma] := [f\circ\gamma]$ is well defined and sample-smooth, $T$ is an endofunctor of
    $\SSS$ with $\pi$ natural, and
    \begin{align}
        \R\times T\Xcal &\longrightarrow T\Xcal, &
        \lp\lambda,[\gamma]\rp &\longmapsto \lambda\cdot[\gamma] := \lB t\mapsto\gamma(\lambda t)\rB,
    \end{align}
    is sample-smooth and restricts to a scalar action on each $T_x\Xcal$ fixing
    $[\mathrm{const}_x]$. We write $df_x := T_xf:\,T_x\Xcal\to T_{f(x)}\Ycal$ for the
    \emph{differential} of $f$ at $x$; the transpose $df^*_x$ on cotangent vectors is
    \Cref{tan:prp:cotangent} below.
    \begin{proof}
        $\pi_\Xcal$ composed with the quotient map is $\ev_0$, which is sample-smooth by
        \Cref{sss:thm:cartesian-closed}, so $\pi_\Xcal$ is sample-smooth by finality,
        \Cref{sss:lem:initial-final}(2). If $\gamma\sim\eta$ then, for $g \in \SSS(\Ycal,\R)$, we
        have $g\circ f \in \SSS(\Xcal,\R)$, so $(g\circ f\circ\gamma)'(0) = (g\circ f\circ\eta)'(0)$;
        together with $f(\gamma(0)) = f(\eta(0))$ this gives $f\circ\gamma\sim f\circ\eta$, so $Tf$
        is well defined, and it is sample-smooth by
        finality because $\gamma\mapsto f\circ\gamma$ is sample-smooth
        $\Xcal^{\R}\to\Ycal^{\R}$. Functoriality is immediate. For the scalar action,
        $(\lambda,\gamma)\mapsto\gamma(\lambda\,\cdot)$ is sample-smooth
        $\R\times\Xcal^{\R}\to\Xcal^{\R}$ by \Cref{sss:thm:cartesian-closed}, it respects
        $\sim$ because $\lp f\circ\gamma(\lambda\,\cdot)\rp'(0) = \lambda\,(f\circ\gamma)'(0)$, and
        $\id_\R\times q$ is again a quotient map: writing $q$ for the quotient map of
        \Cref{tan:def:tangent}, one has $\lp T\Xcal\rp^{\Omega_n} = q\circ\lp\Xcal^{\R}
        \rp^{\Omega_n}$, the constants being images of constant plots, whence
        $\lp\R\times T\Xcal\rp^{\Omega_n} = \lp\id_\R\times q\rp\circ\lp\R\times\Xcal^{\R}
        \rp^{\Omega_n}$ by \Cref{sss:prp:prod-coprod}. So the map descends and is sample-smooth by
        finality.
    \end{proof}
\end{Prp}

\begin{Thm}[Cartesian spaces; the Jacobian]
    \label{tan:thm:euclidean}
    The map $\gamma\mapsto\lp\gamma(0),\gamma'(0)\rp$ induces an isomorphism
    \begin{align}
        T\R^n &\;\cong\; \R^n\times\R^n,
    \end{align}
    under which $T_x\R^n \cong \R^n$ for every $x$, and for $f \in \SSS(\R^m,\R^n) =
    C^\infty(\R^m,\R^n)$ the map $Tf$ becomes
    \begin{align}
        Tf(x,v) &= \lp f(x),\ Df(x)\,v\rp ,
    \end{align}
    with $Df$ the Jacobian. In particular $Df:\,\R^m\to\R^{n\times m}$ is sample-smooth, and all
    higher derivatives are obtained by iterating $T$.
    \begin{proof}
        Write $\beta(\gamma) := \lp\gamma(0),\gamma'(0)\rp$. Since $\SSS(\R^n,\R) = C^\infty(\R^n,\R)$
        by \Cref{sss:thm:euclidean}, the relation \Cref{eq:tangent-equiv} holds precisely when
        $\beta(\gamma) = \beta(\eta)$: ``$\Leftarrow$'' is the chain rule, and ``$\Rightarrow$''
        follows by taking for $f$ the $n$ coordinate projections.

        $\beta$ is sample-smooth: a plot of $\lp\R^n\rp^{\R}$ on $\Omega_m$ is an $F$ for which
        $G(s,\omega,t) := F(s,\omega)(t)$ is smooth in $(s,t)$ jointly and measurable in $\omega$,
        by \Cref{eq:function-space}; then $G(-,-,0)$ and $\partial_t G(-,-,0)$ are smooth in $s$ and
        measurable in $\omega$, the latter as a pointwise limit of difference quotients.

        Finally $\beta$ is a quotient map, because a plot $(a,b)$ of $\R^n\times\R^n$ on $\Omega_m$
        lifts to the plot $u \mapsto \lp t\mapsto a(u) + t\,b(u)\rp$. Hence the induced bijection
        $T\R^n\to\R^n\times\R^n$ is an isomorphism, both sides carrying the final structure for
        $\beta$. The formula for $Tf$ is the chain rule, and sample-smoothness of $Df$
        follows by \Cref{sss:thm:euclidean} from smoothness of $(x,v)\mapsto Df(x)v$.
    \end{proof}
\end{Thm}

\begin{Prp}[Correctness on manifolds]
    \label{tan:prp:manifolds}
    For a finite-dimensional smooth manifold $M$ as in \Cref{sss:eg:manifolds}, $T M$ is the
    classical tangent bundle, $\pi_M$ is the classical projection, and $Tf$ is the classical
    differential for every smooth $f$.
    \begin{proof}
        Write $T^{\mathrm{cl}}M$ for the classical tangent bundle, itself a manifold and so an
        object by \Cref{sss:eg:manifolds}, and put $\beta_M(\gamma) := \gamma'(0) \in
        T^{\mathrm{cl}}M$, whose base point is $\gamma(0)$.

        The fibres of $\beta_M$ are the $\sim$-classes: ``$\Leftarrow$'' is the chain rule, and
        ``$\Rightarrow$'' holds because $\SSS(M,\R) = C^\infty(M,\R)$ by \Cref{sss:thm:manifolds},
        because $(f\circ\gamma)'(0) = df_{\gamma(0)}\lp\gamma'(0)\rp$, and because smooth real
        functions separate classical tangent vectors.

        $\beta_M$ is sample-smooth. Smoothness in the parameter is local and may be read in a chart;
        measurability in the seed may not, since the chart around $G(s,\omega,0)$ depends on
        $\omega$, so we argue globally. Let $G \in M^{\R}{}^{\Omega_m}$ be a plot, i.e.\ a map
        $G:\,\R^m\times\Omega\times\R\to M$ smooth in $(s,t)$ and admissible in $\omega$, and
        fix $s$. Cover $M$ by countably many chart domains $U_i$ with charts $\kappa_i$, which is
        possible because $M$ is second-countable, and put
        \begin{align}
            E_{i,k} &:= \bigcap_{t \in \Q,\ |t|\le 1/k}
            \lC \omega \st G(s,\omega,t) \in U_i \rC ,
        \end{align}
        which lies in $\Bcal_\Omega$ by (M2$_M$) for $G$. The $E_{i,k}$ cover $\Omega$, since each
        curve $G(s,\omega,-)$ is continuous at $0$. On
        $E_{i,k}$ the classical derivative $\partial_tG(s,\omega,0)$ is the pointwise limit as
        $j\to\infty$ of $j\lp\kappa_i G(s,\omega,1/j) - \kappa_i G(s,\omega,0)\rp$, $j\ge k$,
        each term measurable in $\omega$; so $\omega\mapsto\beta_M(G(s,\omega,-))$ is measurable
        on each $E_{i,k}$, hence on $\Omega$.

        $\beta_M$ is a quotient map. Here a \emph{global} lift is needed, since
        \Cref{site:rem:no-locality} leaves no way to subdivide the domain of a plot and reglue.
        Equip $M$ with a complete Riemannian metric, which exists because $M$ is second-countable
        and Hausdorff, so that $\exp:\,T^{\mathrm{cl}}M \to M$ is defined on all of
        $T^{\mathrm{cl}}M$ and smooth. Given a plot $w:\,\Omega_m\to T^{\mathrm{cl}}M$ put
        \begin{align}
            F(u) &:= \lp t \mapsto \exp\lp t \cdot w(u)\rp\rp, \qquad u \in \Omega_m,
        \end{align}
        with $t\cdot w(u)$ the fibrewise scalar multiple. Then $F$ is a plot of $M^{\R}$, being
        smooth in the two smooth arguments jointly and measurable in the seed, and
        $\beta_M\lp F(u)\rp = w(u)$ because
        $\tfrac{d}{dt}\big|_{t=0}\exp(t v) = v$. Hence $T^{\mathrm{cl}}M$ carries the final
        structure for $\beta_M$ and the induced bijection $TM \to T^{\mathrm{cl}}M$ is an
        isomorphism. The statements about $\pi_M$ and $Tf$ follow.
    \end{proof}
\end{Prp}

\begin{Def}[Cotangent space and the pairing]
    \label{tan:def:cotangent}
    For $x \in \Xcal$ let $f \sim_x g$ if $\lp f\circ\gamma\rp'(0) = \lp g\circ\gamma\rp'(0)$ for
    every $\gamma \in \Xcal^{\R}$ with $\gamma(0) = x$, and put
    $T^*_x\Xcal := \SSS(\Xcal,\R)/\!\sim_x$.
\end{Def}

\begin{Prp}[The cotangent space is a vector space; the tangent space is only a cone]
    \label{tan:prp:cotangent}
    $T^*_x\Xcal$ is a real vector space, and
    \begin{align}
        \lA -,-\rA:\; T_x\Xcal \times T^*_x\Xcal &\longrightarrow \R, &
        \lA [\gamma],[f]\rA &:= \lp f\circ\gamma\rp'(0),
    \end{align}
    is well defined, separates points in each variable and is homogeneous in the first. For a
    sample-smooth $f:\,\Xcal\to\Ycal$ the transpose
    $df^*_x:\,T^*_{f(x)}\Ycal\to T^*_x\Xcal$, $df^*_x[g] := [g\circ f]$, is well defined and
    linear, and $\lA df_x(v),[g]\rA = \lA v,df^*_x[g]\rA$ for $df_x$ of
    \Cref{tan:prp:functor}. The pairing is not in general bilinear, because $T_x\Xcal$ need not
    carry a linear structure making it so: for $\Xcal := \lC xy=0\rC \ins \R^2$ the injection
    $d\iota_0:\,T_0\Xcal\inj T_0\R^2 \cong \R^2$ has as image the union of the two axes, which
    is stable under scalars but is not a linear subspace.
    \begin{proof}
        $f\mapsto \lp \gamma\mapsto (f\circ\gamma)'(0)\rp$ is $\R$-linear into the vector space of
        functions on the curves through $x$, and $\sim_x$ is the congruence of its kernel, so the
        quotient is a vector space and the pairing is well defined and separating in the second
        variable; it is separating in the first by \Cref{eq:tangent-equiv}. Homogeneity is
        \Cref{tan:prp:functor}. For the transpose, if $g\sim_{f(x)}\tilde g$ then for every
        $\gamma\in\Xcal^\R$ with $\gamma(0)=x$ the curve $f\circ\gamma$ lies in $\Ycal^\R$ with
        $(f\circ\gamma)(0)=f(x)$, so
        $(g\circ f\circ\gamma)'(0) = (\tilde g\circ f\circ\gamma)'(0)$,
        i.e.\ $g\circ f\sim_x \tilde g\circ f$; linearity and the adjunction identity are then immediate
        from the definitions.

        For the last claim take $\Xcal := \lC(x,y) \in \R^2 \st xy = 0\rC$ with the subspace
        structure. A tangent vector at the origin is determined by the pair
        $\lp\gamma_1'(0),\gamma_2'(0)\rp$: let $f \in \SSS(\Xcal,\R)$ and put $h(s) := f(s,0)$,
        $k(s) := f(0,s)$, which lie in $C^\infty(\R,\R)$ because the two axis inclusions are
        sample-smooth; since at every $t$ at least one coordinate of $\gamma(t)$ vanishes,
        \begin{align}
            f\lp\gamma(t)\rp &= h\lp\gamma_1(t)\rp + k\lp\gamma_2(t)\rp - f(0,0),
        \end{align}
        so $\lp f\circ\gamma\rp'(0) = h'(0)\,\gamma_1'(0) + k'(0)\,\gamma_2'(0)$ depends on $\gamma$
        only through that pair. Both $(1,0)$ and $(0,1)$ occur, from the two axis curves. The pair
        $(1,1)$ does not: if $\gamma_1'(0)\ne0\ne\gamma_2'(0)$ then each $\gamma_i$ is non-zero on a
        punctured neighbourhood of $0$, contradicting $\gamma_1\gamma_2 \equiv 0$. So the image of
        $T_0\Xcal$ in $\R^2$ is the union of the two axes, as claimed.
    \end{proof}
\end{Prp}

\begin{Rem}[Where $T$ degenerates]
    \label{tan:rem:flat}
    A rigid object has no non-constant curves, so $T\flat\Scal \cong \flat\Scal$ and every tangent
    space is a point: on the image of $\flat$ the tangent functor is the identity. This is the
    expected reading --- data typed by $\flat$ is data one may not differentiate --- and it is also
    a warning, since it says that $T$ carries no information there. What $T\PrPf(\R^n)$ looks like
    we do not know; see \Cref{disc:sec:open}.
\end{Rem}

\section{The Modal Functors and the Adjoint String}
\label{sec:modal}

The measurable half sits inside $\SSS$ in two extreme ways, by giving a quasi-universal space as few
plots as possible or as many as possible. The two are adjoint to the forgetful functor on either
side, and the resulting string is one longer at each end.

\subsection{The Forgetful Functor and the Two Modalities}
\label{modal:sec:triple}

\begin{Lem}[$\Und$, $\flat$ and $\sharp$ are functors]
    \label{modal:lem:functors}
    $\Und:\,\SSS\to\QUS$, $\flat:\,\QUS\to\SSS$ and $\sharp:\,\QUS\to\SSS$ of
    \Cref{sss:not:derived,sss:eg:flat-sharp} are functors, and
    $\Und\flat = \Und\sharp = \id_{\QUS}$.
    \begin{proof}
        Functoriality is immediate in each case. $\lp\flat\Scal\rp^\Omega = \Scal^\Omega$ because
        $\pi_0 = \id_{\Omega_0}$; and $\lp\sharp\Scal\rp^\Omega = \Scal^\Omega$ because $\id_{\Omega_0}$ is one
        of the $h$ quantified over, giving ``$\ins$'', while ``$\sni$'' is closure of $\Scal^\Omega$
        under $\Omega^\Omega$.
    \end{proof}
\end{Lem}

\begin{Thm}[The adjoint triple]
    \label{modal:thm:triple}
    $\flat \dashv \Und \dashv \sharp$, and $\flat$ and $\sharp$ are full and faithful. Consequently
    $\Und$ preserves all limits and colimits, $\flat$ preserves colimits and $\sharp$ preserves
    limits, and $\QUS$ is a full subcategory of $\SSS$ in two ways.
    \begin{proof}
        \emph{$\flat\dashv\Und$.} Let $f:\,\Scal\to\Xcal$ be a map. If $f \in \QUS(\Scal,\Und\Xcal)$
        then for $X \in \Scal^\Omega$ we get $f\circ X \in \Xcal^\Omega$ and hence
        $f\circ (X\circ\pi_n) = (f\circ X)\circ\pi_n \in \Xcal^{\Omega_n}$ by (P2); as the plots of
        $\flat\Scal$ are exactly the $X\circ\pi_n$, $f$ is sample-smooth. Conversely, taking $n=0$
        and $\pi_0 = \id$ recovers $f\circ X \in \Xcal^\Omega$.

        \emph{$\Und\dashv\sharp$.} Let $f:\,\Xcal\to\Scal$ be a map. It is sample-smooth into
        $\sharp\Scal$ iff $f\circ p \circ h \in \Scal^\Omega$ for every $n$, every
        $p \in \Xcal^{\Omega_n}$ and every $h \in \Site(\Omega_0,\Omega_n)$. Since $p\circ h \in \Xcal^\Omega$
        this is implied by $f \in \QUS(\Und\Xcal,\Scal)$, and taking $n=0$, $h=\id$ gives the
        converse.

        Full faithfulness of $\flat$ and $\sharp$ follows from $\Und\flat = \Und\sharp = \id$,
        \Cref{modal:lem:functors}, together with the two adjunctions.
    \end{proof}
\end{Thm}

\begin{Rem}[Reading the modalities]
    \label{modal:rem:reading}
    $\flat\Scal$ is $\Scal$ with \emph{no} differentiable structure: its only $n$-parameter families
    are the constant ones, so a sample-smooth map out of a connected smooth object into $\flat\Scal$
    is constant. $\sharp\Scal$ is $\Scal$ with \emph{no} smoothness requirement: a map into
    $\sharp\Scal$ is sample-smooth as soon as it is quasi-measurable. In the intended reading of
    $\SSS$ as a semantics, $\flat$ marks data that must not be differentiated and $\sharp$ marks data
    whose differentiable structure is not being tracked.
\end{Rem}

\subsection{The Outermost Left Adjoint: the Shape}
\label{modal:sec:shape}

\begin{Def}[Plot-connectedness and the shape]
    \label{modal:def:shape}
    For $\Xcal \in \SSS$ let $\sim$ be the equivalence relation on $\Xcal$ generated by
    \begin{align}
        p(t,\omega) &\sim p(t',\omega), \qquad p \in \Xcal^{\Omega_1},\ t,t' \in \R,\ \omega \in \Omega .
    \end{align}
    Let $\Pi\Xcal := \Xcal/\!\sim$ with the final quasi-universal structure along the projection
    $q_\Xcal:\,\Und\Xcal \to \Pi\Xcal$, i.e.\
    $\lp\Pi\Xcal\rp^\Omega = q_\Xcal \circ \Xcal^\Omega$ when $\Xcal \ne \zero$.
\end{Def}

\begin{Lem}[$\sim$ sees all arities]
    \label{modal:lem:shape-arities}
    For $p \in \Xcal^{\Omega_n}$, $t,t' \in \R^n$ and $\omega \in \Omega$ one has
    $p(t,\omega)\sim p(t',\omega)$.
    \begin{proof}
        Pick $\varphi \in C^\infty(\R,\R^n)$ with $\varphi(0)=t$, $\varphi(1)=t'$, for instance an
        affine path. Then $p \circ \varphi^{(n)} \in \Xcal^{\Omega_1}$ by \Cref{site:not:morphisms}, and it
        joins the two points at the same $\omega$.
    \end{proof}
\end{Lem}

\begin{Thm}[$\Pi \dashv \flat$]
    \label{modal:thm:shape-adjoint}
    $\Pi:\,\SSS\to\QUS$ is a functor and $\Pi \dashv \flat$; moreover $\Pi\flat \cong \id_\QUS$.
    \begin{proof}
        Let $f:\,\Xcal\to\Scal$ be a map. Then $f$ is sample-smooth as a map $\Xcal\to\flat\Scal$ iff
        for every $n$ and $p \in \Xcal^{\Omega_n}$ the composite $f\circ p$ is of the form $X\circ\pi_n$
        with $X \in \Scal^\Omega$; that is, iff (a) $f\circ p$ does not depend on the $\R^n$
        coordinate, and (b) $f\circ p\circ\iota^n_{t_0} \in \Scal^\Omega$ for some, hence by (a) for
        every, $t_0$.

        By \Cref{modal:lem:shape-arities}, (a) for all $n,p$ holds iff $f$ is constant on
        $\sim$-classes, i.e.\ iff $f = \bar f \circ q_\Xcal$ for a unique $\bar f$. Given (a),
        condition (b) ranges over exactly the elements $p\circ\iota^n_{t_0}$ of $\Xcal^\Omega$, and
        every element of $\Xcal^\Omega$ occurs, so (b) says $f \in \QUS(\Und\Xcal,\Scal)$, which by
        finality of $\Pi\Xcal$ says $\bar f \in \QUS(\Pi\Xcal,\Scal)$. The correspondence
        $f \leftrightarrow \bar f$ is the required natural bijection. Finally $\flat\Scal$ has only
        $t$-independent plots, so $\sim$ is trivial on it and
        $\Pi\flat\Scal = \Scal$ with $q = \id$.
    \end{proof}
\end{Thm}

\begin{Prp}[$\Pi$ preserves finite products]
    \label{modal:prp:shape-products}
    $\Pi\one = \one$ and the canonical map
    $\Pi(\Xcal\times\Ycal) \to \Pi\Xcal\times\Pi\Ycal$ is an isomorphism of quasi-universal spaces.
    \begin{proof}
        The map is surjective because $(x,y)$ maps to $([x],[y])$. For injectivity, suppose
        $x \sim x'$ in $\Xcal$ and $y \sim y'$ in $\Ycal$. It suffices to treat a single generating
        step in each factor. If $p \in \Xcal^{\Omega_1}$ joins $x$ to $x'$ at $\omega$, then
        $\lp p,\mathrm{const}_y\rp \in (\Xcal\times\Ycal)^{\Omega_1}$ by \Cref{sss:prp:prod-coprod} joins
        $(x,y)$ to $(x',y)$ at $\omega$; symmetrically for the second factor. Concatenating the two
        chains gives $(x,y)\sim(x',y')$.

        For the structures: $\lp\Pi\Xcal\times\Pi\Ycal\rp^\Omega =
        \lC (q_\Xcal\circ X,\,q_\Ycal\circ Y) \st X \in \Xcal^\Omega,\ Y \in \Ycal^\Omega\rC$ by
        \Cref{qus:thm:quasitopos,modal:def:shape}, while
        $\Pi(\Xcal\times\Ycal)^\Omega = \lC q\circ(X,Y)\rC$, and $(X,Y)$ ranges over all pairs by
        \Cref{sss:prp:prod-coprod}. The bijection matches the two families.
    \end{proof}
\end{Prp}

\subsection{The Outermost Right Adjoint}
\label{modal:sec:lambda}

\begin{Lem}[Factorisation of loose plots]
    \label{modal:lem:factorisation}
    Let $\Scal \in \QUS$, $n\ge0$ and $p \in \lp\sharp\Scal\rp^{\Omega_n}$. Then there are $X \in \Scal^\Omega$
    and $q \in \lp\sharp\Omega\rp^{\Omega_n}$ with $p = X \circ q$.
    \begin{proof}
        By ($\Omega$5) there is a Borel injection $\iota:\,\R^n\to\Omega$ with Borel image $B$ and
        Borel inverse; let $\rho:\,\Omega\to\R^n$ be Borel with $\rho\circ\iota = \id_{\R^n}$, for
        instance $\rho := \iota^{-1}$ on $B$ and $0$ off $B$. Put
        \begin{align}
            q(t,\omega) &:= \Theta^{-1}\lp\iota(t),\,\omega\rp, &
            X(\sigma) &:= p\lp \rho(\Theta_1\sigma),\ \Theta_2\sigma \rp .
        \end{align}
        Then $X(q(t,\omega)) = p(\rho\iota(t),\omega) = p(t,\omega)$. For
        $q \in \lp\sharp\Omega\rp^{\Omega_n}$ take $h = \lb W,\Phi\rb \in \Site(\Omega_0,\Omega_n)$; then
        $q\circ h = \Theta^{-1}\lp\iota\circ W,\Phi\rp \in \Omega^\Omega$ by
        \Cref{qus:lem:borel-suffices}. And $X = p \circ \lb\rho\circ\Theta_1,
        \Theta_2\rb$ with $\lb\rho\circ\Theta_1,\Theta_2\rb \in \Site(\Omega_0,\Omega_n)$, so
        $X \in \Scal^\Omega$ because $p \in \lp\sharp\Scal\rp^{\Omega_n}$.
    \end{proof}
\end{Lem}

\begin{Thm}[$\sharp\dashv\Lambda$]
    \label{modal:thm:lambda}
    Let $\Lambda\Xcal := \lp \Xcal,\ \SSS(\sharp\Omega,\Xcal)\rp$, where a sample-smooth map
    $\sharp\Omega\to\Xcal$ is read as a map $\Omega\to\Xcal$. Then $\Lambda$ is a functor
    $\SSS\to\QUS$ and $\sharp\dashv\Lambda$.
    \begin{proof}
        $\Lambda\Xcal$ is a quasi-universal space: constants are sample-smooth, and for
        $\Phi \in \Omega^\Omega$ the map $\sharp\Phi:\,\sharp\Omega\to\sharp\Omega$ is sample-smooth
        by \Cref{modal:lem:functors}, so $X\circ\Phi = X\circ\sharp\Phi$ is again sample-smooth.
        Functoriality is postcomposition.

        Let $f:\,\Scal\to\Xcal$ be a map. If $f$ is sample-smooth $\sharp\Scal\to\Xcal$ then for
        every $X \in \Scal^\Omega$ the composite $f\circ X = f\circ\sharp X$ is sample-smooth
        $\sharp\Omega\to\Xcal$, i.e.\ lies in $(\Lambda\Xcal)^\Omega$; so
        $f \in \QUS(\Scal,\Lambda\Xcal)$. Conversely assume the latter and let
        $p \in \lp\sharp\Scal\rp^{\Omega_n}$. Write $p = X\circ q$ as in \Cref{modal:lem:factorisation}. Then
        $f\circ X \in \SSS(\sharp\Omega,\Xcal)$ by hypothesis and $q \in \lp\sharp\Omega\rp^{\Omega_n}$, so
        $f\circ p = (f\circ X)\circ q \in \Xcal^{\Omega_n}$. Naturality is clear.
    \end{proof}
\end{Thm}

\begin{Thm}[The adjoint string and cohesion]
    \label{modal:thm:string}
    There is an adjoint string
    \begin{align}
        \Pi \;\dashv\; \flat \;\dashv\; \Und \;\dashv\; \sharp \;\dashv\; \Lambda,
        \qquad \Pi,\Und,\Lambda:\,\SSS\to\QUS, \quad \flat,\sharp:\,\QUS\to\SSS,
    \end{align}
    in which $\flat$ and $\sharp$ are full and faithful and $\Pi$ preserves finite products. In
    particular $\SSS$ is \emph{cohesive} over $\QUS$ in the sense of \cite{Law07}, with the
    additional right adjoint $\Lambda$.
    \begin{proof}
        \Cref{modal:thm:shape-adjoint}, \Cref{modal:thm:triple}, \Cref{modal:thm:lambda,modal:prp:shape-products}.
    \end{proof}
\end{Thm}

\begin{Rem}[What the string is for]
    \label{modal:rem:string}
    Two uses. First, it identifies the two ways in which the measurable theory of \cite{For21} sits
    inside the present one, and \Cref{prob:thm:functor-unit}(3) shows that the monad commutes
    with the rigid embedding, so that \cite{For21} is recovered on the nose. Second, in a semantics
    the modalities are how non-differentiable data is typed: a value in $\flat\Scal$ cannot be
    differentiated because it cannot vary smoothly, and by \Cref{modal:thm:triple} that is a
    property of the object rather than a side condition on morphisms. We do not pursue the
    programming consequences here.
\end{Rem}

\section{The Probability Monad}
\label{sec:prob}

This is what the site was chosen for, and it contains the two results the paper is written
around: $\PrPf$ is a monad on all of $\SSS$, \Cref{prob:thm:monad}, and its Kleisli category is a
Markov category on all of $\SSS$, \Cref{prob:thm:markov}. We define the plots of $\PrPf(\Xcal)$ at
\emph{every} test object to be the push-forwards of $\Xcal$-plots, and every piece of monad
structure then falls out of the seed splitting of \Cref{qus:lem:split-U}. Nothing in this section
carries a hypothesis on the objects: no closure property, no regularity, no selection theorem.
\Cref{prob:rem:comparison} explains where the difficulty went.

\subsection{The Object of Measures}
\label{prob:sec:object}

\begin{Not}[The law of a plot]
    \label{prob:not:law}
    For $\Xcal \in \SSS$, $n \ge 0$ and $Y \in \Xcal^{\Omega_n}$ define
    \begin{align}
        \Law Y:\; \Omega_n &\longrightarrow \Pcal\lp\Xcal,\Bcal_\Xcal\rp, &
        \Law Y(t,\omega) &:= \lp \omega' \mapsto Y\lp t,\Theta^{-1}(\omega,\omega')\rp \rp_* U .
        \label{eq:law}
    \end{align}
    The seed of $Y$ is split by $\Theta$ into a \emph{parameter} half, retained as the second
    argument of $\Law Y$, and a \emph{sampling} half, integrated out against $U$.
\end{Not}

\begin{Def}[The space of measures]
    \label{prob:def:PrPf}
    For $\Xcal \in \SSS$ put
    \begin{align}
        \PrPf(\Xcal) &:= \lC X_*P \st X \in \Xcal^\Omega,\ P \in \PrPfO \rC, &
        \PrPf(\Xcal)^{\Omega_n} &:= \lC \Law Y \st Y \in \Xcal^{\Omega_n} \rC .
        \label{eq:PrPf-plots}
    \end{align}
\end{Def}

\begin{Lem}[$\PrPf(\Xcal)$ is a sample-smooth space]
    \label{prob:lem:object}
    \Cref{eq:PrPf-plots} is well defined --- every value of $\Law Y$ lies in $\PrPf(\Xcal)$ ---
    and satisfies (P1) and (P2).
    \begin{proof}
        \emph{Values.} Fix $(t,\omega)$ and put
        $\iota := \lb \mathrm{const}_t,\ \Theta^{-1}(\omega,-)\rb$, a morphism
        $\Omega_0\to\Omega_n$ by \Cref{site:not:morphisms}, its seed part lying in
        $\Omega^\Omega$ by ($\Omega$2) with \Cref{qus:lem:borel-suffices}(1). Then $Y\circ\iota$, which sends $\omega'$ to
        $Y\lp t,\Theta^{-1}(\omega,\omega')\rp$, lies in $\Xcal^\Omega$, and its law under $U$
        lies in $\PrPf(\Xcal)$.

        \emph{(P1).} Let $\mu = X_*P \in \PrPf(\Xcal)$. By ($\Omega$5) choose
        $\psi \in \Omega^\Omega$ with $\psi_*U = P$ and put
        $Y := X\circ\psi\circ\Theta_2\circ\pi_n \in \Xcal^{\Omega_n}$. Then
        $\Law Y(t,\omega) = \lp\omega'\mapsto X(\psi(\omega'))\rp_*U = X_*P = \mu$ for all
        $(t,\omega)$.

        \emph{(P2).} Let $h = (g,\Phi) \in \Site(\Omega_m,\Omega_n)$ and put
        \begin{align}
            Y'(s,\sigma) &:= Y\Big( g\lp s,\Theta_1\sigma\rp,\ \Theta^{-1}\big(\Phi(\Theta_1\sigma),
            \ \Theta_2\sigma\big)\Big).
            \label{eq:PrPf-substitution}
        \end{align}
        The map $(s,\sigma)\mapsto \lp g(s,\Theta_1\sigma),\,\Theta^{-1}(\Phi(\Theta_1\sigma),
        \Theta_2\sigma)\rp$ is a $\Site$-morphism $\Omega_m\to\Omega_n$: its smooth part satisfies
        (M1) because $g$ does and (M2) by ($\Omega$2), and its seed part is Borel and independent of
        $s$. Hence $Y' \in \Xcal^{\Omega_m}$. Substituting $\sigma := \Theta^{-1}(\omega,\omega')$, so that
        $\Theta_1\sigma = \omega$ and $\Theta_2\sigma = \omega'$, gives
        $Y'\lp s,\Theta^{-1}(\omega,\omega')\rp = Y\lp g(s,\omega),\Theta^{-1}(\Phi(\omega),
        \omega')\rp$ and therefore $\Law Y' = \lp\Law Y\rp\circ h$.
    \end{proof}
\end{Lem}

\begin{Prp}[The measurable half is unchanged]
    \label{prob:prp:underlying}
    $\Und\PrPf(\Xcal) = \PrPf(\Und\Xcal)$, with $\PrPf$ on the right the push-forward monad of
    \Cref{qus:def:PrPf}. In particular $\Bcal_{\PrPf(\Xcal)}$ is the $\sigma$-algebra of
    \cite{For21} and no measure-theoretic notion changes.
    \begin{proof}
        The underlying sets agree by \Cref{eq:PrPf-plots}. For the random variables, let
        $Y \in \Xcal^\Omega$ and put $X(\omega) := Y\lp\Theta^{-1}(\omega,-)\rp$. Then
        $X \in \lp\Xcal^\Omega\rp^\Omega$: for $\Phi,Z \in \Omega^\Omega$ the map
        $\omega\mapsto X(\Phi(\omega))(Z(\omega)) = Y\lp\Theta^{-1}(\Phi(\omega),Z(\omega))\rp$ is in
        $\Xcal^\Omega$ by \Cref{qus:lem:borel-suffices}(2), which is the function-space criterion of
        \Cref{qus:thm:quasitopos}; and $\Law Y(\omega) = X(\omega)_*U$. Conversely let
        $X \in \lp\Xcal^\Omega\rp^\Omega$ and $P \in \PrPfO$, choose $\psi$ with $\psi_*U = P$ by
        ($\Omega$5) and put $Y(\sigma) := X(\Theta_1\sigma)\lp\psi(\Theta_2\sigma)\rp$, which lies in
        $\Xcal^\Omega$ by the same criterion. Then
        $\Law Y(\omega) = \lp\omega'\mapsto X(\omega)(\psi(\omega'))\rp_*U = X(\omega)_*P$.
    \end{proof}
\end{Prp}

\begin{Prp}[The families of measures are the reparametrisable ones]
    \label{prob:prp:curves}
    For every $n \ge 0$,
    \begin{align}
        \PrPf(\Xcal)^{\R^n} &= \lC \lp t\mapsto \lp Y(t,-)\rp_*U\rp \st Y \in \Xcal^{\Omega_n} \rC .
        \label{eq:reparametrisable}
    \end{align}
    \begin{proof}
        ``$\ins$'': if $\gamma\circ\pr_{\R^n} = \Law Y$ then, fixing any $\omega_0$,
        $\gamma(t) = \Law Y(t,\omega_0) = \lp \tilde Y(t,-)\rp_*U$ where
        $\tilde Y := Y\circ\lp\pr_{\R^n},\Theta^{-1}(\omega_0,-)\rp \in \Xcal^{\Omega_n}$.
        ``$\sni$'': given $\tilde Y \in \Xcal^{\Omega_n}$ put $Y := \tilde Y \circ \Theta_2^{(n)}$, i.e.\
        $Y(t,\sigma) := \tilde Y(t,\Theta_2\sigma)$; then
        $\Law Y(t,\omega) = \lp\tilde Y(t,-)\rp_*U$ for every $\omega$, so $\Law Y$ is
        $\omega$-independent and its associated $\gamma$ is the given family.
    \end{proof}
\end{Prp}

\Cref{prob:prp:underlying,prob:prp:curves} say that $\PrPf$ is what one would expect on
everything a presentation by curves and random variables could see: the same measures, the same
random variables, and the same smooth families --- the right-hand side of
\Cref{eq:reparametrisable} is what one would call a \emph{reparametrisable} family, the law of a
smoothly varying random variable. All the new content sits at $\Omega_n$ for $n \ge 1$, and
\Cref{prob:rem:comparison} says what it is.

\subsection{Functor and Unit}
\label{prob:sec:functor}

\begin{Thm}[Functor, unit, and compatibility with $\flat$]
    \label{prob:thm:functor-unit}
    \begin{enumerate}
        \item $\PrPf$ is an endofunctor of $\SSS$: for sample-smooth $f:\,\Xcal\to\Ycal$ the
            push-forward $\PrPf(f)(\mu) := f_*\mu$ is sample-smooth, and
            $\PrPf(f)\circ\Law Y = \Law\lp f\circ Y\rp$.
        \item $\delta_\Xcal:\,\Xcal\to\PrPf(\Xcal)$, $x\mapsto\delta_x$, is sample-smooth and natural.
        \item $\PrPf\circ\flat = \flat\circ\PrPf$.
    \end{enumerate}
    \begin{proof}
        (1) $f_*\lp X_*P\rp = (f\circ X)_*P \in \PrPf(\Ycal)$, so $\PrPf(f)$ is well defined; the
        displayed identity is immediate from \Cref{eq:law}, and $f\circ Y \in \Ycal^{\Omega_n}$.

        (2) For $p \in \Xcal^{\Omega_n}$ put $Y := p\circ\Theta_1^{(n)}$, i.e.\
        $Y(t,\sigma) := p(t,\Theta_1\sigma)$, which lies in $\Xcal^{\Omega_n}$ by
        \Cref{site:not:morphisms}. Then
        $\Law Y(t,\omega) = \lp\omega'\mapsto p(t,\omega)\rp_*U = \delta_{p(t,\omega)}$, so
        $\delta_\Xcal\circ p = \Law Y \in \PrPf(\Xcal)^{\Omega_n}$. Naturality is
        $f_*\delta_x = \delta_{f(x)}$.

        (3) The plots of $\flat\Scal$ are the $t$-independent ones with seed part in $\Scal^\Omega$,
        and $\Law$ of such a plot is again $t$-independent with seed part in $\PrPf(\Scal)^\Omega$
        by \Cref{prob:prp:underlying}; conversely every plot of $\flat\PrPf(\Scal)$ arises this way.
    \end{proof}
\end{Thm}

\begin{Lem}[Push-forward along a varying map]
    \label{prob:lem:pf-enriched}
    For all $\Xcal,\Ycal \in \SSS$ the evaluation-and-push-forward map
    \begin{align}
        \PrPf(\Xcal) \times \Ycal^\Xcal &\longrightarrow \PrPf(\Ycal), &
        (\mu,f) &\longmapsto f_*\mu,
    \end{align}
    is sample-smooth. Equivalently, push-forward is enriched over $\SSS$.
    \begin{proof}
        Let $\lp\Law Y,F\rp$ be a plot of the source on $\Omega_n$, so $Y \in \Xcal^{\Omega_n}$ and
        $F \in \lp\Ycal^\Xcal\rp^{\Omega_n}$. Put $Z(t,\sigma) := F\lp t,\Theta_1\sigma\rp\lp Y(t,\sigma)\rp$.
        Applying \Cref{eq:function-space} with $m := n$, $h := \Theta_1^{(n)}$ and $q := Y$ shows
        $Z \in \Ycal^{\Omega_n}$. Substituting $\sigma := \Theta^{-1}(\omega,\omega')$ gives
        $Z\lp t,\Theta^{-1}(\omega,\omega')\rp = F(t,\omega)\lp Y(t,\Theta^{-1}(\omega,\omega'))\rp$,
        whence $\Law Z(t,\omega) = F(t,\omega)_* \Law Y(t,\omega)$, which is the composite in
        question.
    \end{proof}
\end{Lem}

\subsection{Products of Kernels}
\label{prob:sec:otimes}

\begin{Thm}[The product of measures]
    \label{prob:thm:otimes}
    The product of measures of \Cref{qus:thm:PrPf},
    \begin{align}
        \otimes:\; \PrPf(\Xcal)\times\PrPf(\Ycal) &\longrightarrow \PrPf(\Xcal\times\Ycal), &
        (\mu,\nu) &\longmapsto \mu\otimes\nu,
    \end{align}
    defined on all of $\Bcal_{\Xcal\times\Ycal}$ by \Cref{eq:otimes-section}, is sample-smooth and
    natural in $\Xcal$ and $\Ycal$. No hypothesis is placed on either space.
    \begin{proof}
        Let $\lp\Law Y,\Law Z\rp$ be a plot of the source on $\Omega_n$, with $Y \in \Xcal^{\Omega_n}$ and
        $Z \in \Ycal^{\Omega_n}$. Put
        \begin{align}
            W(t,\sigma) &:= \Big( Y\big(t,\Theta^{-1}(\Theta_1\sigma,\ \Theta_1\Theta_2\sigma)\big),
            \ Z\big(t,\Theta^{-1}(\Theta_1\sigma,\ \Theta_2\Theta_2\sigma)\big)\Big).
        \end{align}
        Each component is $Y$ resp.\ $Z$ precomposed with a $\Site$-morphism whose smooth part is
        the projection and whose seed part is Borel, so $W \in (\Xcal\times\Ycal)^{\Omega_n}$ by
        \Cref{sss:prp:prod-coprod}. Substituting $\sigma := \Theta^{-1}(\omega,\omega')$,
        \begin{align}
            W\lp t,\Theta^{-1}(\omega,\omega')\rp &= \Big(Y\big(t,\Theta^{-1}(\omega,\Theta_1
            \omega')\big),\ Z\big(t,\Theta^{-1}(\omega,\Theta_2\omega')\big)\Big),
        \end{align}
        and under $U$ the pair $(\Theta_1\omega',\Theta_2\omega')$ has law $U\otimes U$ by
        \Cref{qus:lem:split-U}. Writing $A(\omega_1) := Y\lp t,\Theta^{-1}(\omega,\omega_1)\rp$
        and $B(\omega_2) := Z\lp t,\Theta^{-1}(\omega,\omega_2)\rp$, the displayed map is
        $(A,B)\circ\Theta$, which lies in $(\Xcal\times\Ycal)^\Omega$; so for
        $C \in \Bcal_{\Xcal\times\Ycal}$ the set $\lp(A,B)\circ\Theta\rp^{-1}(C)$ lies in
        $\Bcal_\Omega$, and $(A,B)^{-1}(C) \in \Bcal_{\Omega\times\Omega}$ because $\Theta$ is
        an isomorphism of quasi-universal spaces, \Cref{qus:lem:borel-suffices}(2). Only then may
        ($\Omega$3) be applied. So the law of the displayed map is the product of the two marginal
        laws, computed by the iterated integral \Cref{eq:otimes-section}; that is,
        $\Law W(t,\omega) = \Law Y(t,\omega)\otimes\Law Z(t,\omega)$. Naturality is naturality of
        \Cref{eq:otimes-section} in $\Xcal,\Ycal$.
    \end{proof}
\end{Thm}

\begin{Cor}[Independent product of kernels]
    \label{prob:cor:kernel-indep}
    For all $\Xcal,\Ycal,\Zcal \in \SSS$ the map
    \begin{align}
        \otimes:\; \PrPf(\Xcal)^\Zcal \times \PrPf(\Ycal)^\Zcal &\longrightarrow
        \PrPf(\Xcal\times\Ycal)^\Zcal, &
        P(X|Z)\otimes Q(Y|Z) &= \lp P\otimes Q\rp(X,Y|Z),
        \label{eq:kernel-indep}
    \end{align}
    given on points by $(P\otimes Q)(z) := P(z)\otimes Q(z)$, is sample-smooth.
    \begin{proof}
        By \Cref{sss:thm:cartesian-closed} it suffices that the uncurried map
        $\PrPf(\Xcal)^\Zcal\times\PrPf(\Ycal)^\Zcal\times\Zcal\to\PrPf(\Xcal\times\Ycal)$,
        $(P,Q,z)\mapsto P(z)\otimes Q(z)$, be sample-smooth; it is
        $\otimes\circ(\ev\times\ev)\circ\alpha$ for the evident sample-smooth rearrangement
        $\alpha$, and $\ev$ is sample-smooth by \Cref{sss:thm:cartesian-closed}.
    \end{proof}
\end{Cor}

\begin{Cor}[Costrength and strength]
    \label{prob:cor:strength}
    The costrength and the strength
    \begin{align}
        \rho_{\Xcal,\Ycal}:\; \PrPf(\Xcal)\times\Ycal &\to \PrPf(\Xcal\times\Ycal), &
        (\mu,y) &\mapsto \mu\otimes\delta_y, \\
        \tau_{\Xcal,\Ycal}:\; \Xcal\times\PrPf(\Ycal) &\to \PrPf(\Xcal\times\Ycal), &
        (x,\nu) &\mapsto \delta_x\otimes\nu,
    \end{align}
    are sample-smooth and natural, and are related by the braiding.
    \begin{proof}
        $\rho = \otimes\circ\lp\id\times\delta_\Ycal\rp$ by \Cref{prob:thm:otimes,prob:thm:functor-unit}(2); compose with the braiding for $\tau$.
    \end{proof}
\end{Cor}

\subsection{The Multiplication}
\label{prob:sec:mult}

\begin{Thm}[The multiplication is sample-smooth]
    \label{prob:thm:mult}
    For every $\Xcal \in \SSS$ the mixture map
    \begin{align}
        \Mbb_\Xcal:\; \PrPf\lp\PrPf(\Xcal)\rp &\longrightarrow \PrPf(\Xcal), &
        \Mbb_\Xcal(\Gamma) &:= \int \nu\,\Gamma(d\nu),
    \end{align}
    is sample-smooth and natural in $\Xcal$. No hypothesis is placed on $\Xcal$.
    \begin{proof}
        That $\Mbb_\Xcal$ is a well-defined map into $\PrPf(\Xcal)$, and natural as a map of sets,
        is \Cref{qus:thm:PrPf} together with \Cref{prob:prp:underlying}.

        A plot of $\PrPf\PrPf(\Xcal)$ on $\Omega_n$ is $\Law\Ncal$ with
        $\Ncal \in \PrPf(\Xcal)^{\Omega_n}$, and $\Ncal = \Law Y$ with $Y \in \Xcal^{\Omega_n}$, by
        \Cref{eq:PrPf-plots} applied twice. Define the Borel map
        \begin{align}
            \Psi:\;\Omega\times\Omega &\to \Omega, &
            \Psi(\omega,\tilde\omega) &:= \Theta^{-1}\Big(\Theta^{-1}\big(\omega,\Theta_1
            \tilde\omega\big),\ \Theta_2\tilde\omega\Big),
        \end{align}
        and put $Y' := Y \circ \Xi^{(n)}$ with $\Xi(\sigma) := \Psi\lp\Theta_1\sigma,\Theta_2
        \sigma\rp$, so that $\Xi \in \Omega^\Omega$ by \Cref{qus:lem:borel-suffices} and hence
        $Y' \in \Xcal^{\Omega_n}$. We claim
        \begin{align}
            \Mbb_\Xcal \circ \Law\lp\Law Y\rp &= \Law Y' ,
            \label{eq:mult-computation}
        \end{align}
        which proves the theorem.

        Fix $(t,\omega)$ and $A \in \Bcal_\Xcal$, and abbreviate
        $G_{\omega'}(\omega'') := Y\lp t,\Theta^{-1}\big(\Theta^{-1}(\omega,\omega'),\omega''\big)\rp$.
        By the change-of-variables formula for the push-forward defining
        $\Law(\Law Y)(t,\omega)$,
        \begin{align}
            \Mbb_\Xcal\Big(\Law\lp\Law Y\rp(t,\omega)\Big)(A)
            &= \int_\Omega \Law Y\lp t,\Theta^{-1}(\omega,\omega')\rp(A)\ U(d\omega')
            = \int_\Omega U\Big( G_{\omega'}^{-1}(A) \Big) U(d\omega') .
        \end{align}
        The map $G:\,(\omega',\omega'')\mapsto G_{\omega'}(\omega'')$ satisfies
        $G\circ\Theta = \lp\omega''' \mapsto Y(t,\Psi(\omega,\omega'''))\rp$, which lies in
        $\Xcal^\Omega$, being $Y$ precomposed with the $\Site$-morphism
        $\lb\mathrm{const}_t,\Psi(\omega,-)\rb$. Hence $(G\circ\Theta)^{-1}(A) \in \Bcal_\Omega$
        and, $\Theta$ being an isomorphism of quasi-universal spaces by
        \Cref{qus:lem:borel-suffices}(2), $G^{-1}(A) \in \Bcal_{\Omega\times\Omega}$. Therefore ($\Omega$3) applies and
        \begin{align}
            \int_\Omega U\Big(G_{\omega'}^{-1}(A)\Big) U(d\omega')
            &= \lp U\otimes U\rp\lp G^{-1}(A)\rp
            = U\Big(\lp G\circ\Theta\rp^{-1}(A)\Big) \nonumber \\
            &= U\Big(\lC \omega''' \st Y\lp t,\Psi(\omega,\omega''')\rp \in A \rC\Big),
        \end{align}
        the middle equality by \Cref{qus:lem:split-U}. On the other hand, substituting
        $\sigma := \Theta^{-1}(\omega,\omega''')$ in the definition of $Y'$ gives
        $Y'\lp t,\Theta^{-1}(\omega,\omega''')\rp = Y\lp t,\Psi(\omega,\omega''')\rp$, so the last
        display is $\Law Y'(t,\omega)(A)$. This is \Cref{eq:mult-computation}.
    \end{proof}
\end{Thm}

\begin{Thm}[The probability monad]
    \label{prob:thm:monad}
    $\lp\PrPf,\delta,\Mbb\rp$ is a monad on $\SSS$, strong for the cartesian monoidal structure,
    commutative and affine. It is a genuine endofunctor of $\SSS$ and no subcategory, closure
    property or regularity hypothesis is involved.
    \begin{proof}
        Functor, unit and multiplication are sample-smooth by \Cref{prob:thm:functor-unit},
        \Cref{prob:thm:mult}, and all three are natural. The monad laws
        $\Mbb\circ\delta_{\PrPf} = \id = \Mbb\circ\PrPf(\delta)$ and
        $\Mbb\circ\Mbb_{\PrPf} = \Mbb\circ\PrPf(\Mbb)$ are equalities of maps of underlying sets,
        which hold by \Cref{qus:thm:PrPf,prob:prp:underlying}; since a sample-smooth map
        is determined by its underlying map, they hold in $\SSS$.

        Strength is \Cref{prob:cor:strength}. Commutativity: both double strengths
        $\Mbb\circ\PrPf(\tau)\circ\rho$ and $\Mbb\circ\PrPf(\rho)\circ\tau$ equal $\otimes$, by the
        pointwise computation $\int\lp\delta_x\otimes\nu\rp\mu(dx) = \mu\otimes\nu$ and its mirror
        image, which is legitimate now that all three maps are sample-smooth; symmetry of $\otimes$
        is ($\Omega$3). Affineness is $\PrPf(\one)\cong\one$.
    \end{proof}
\end{Thm}

\begin{Thm}[Dependent product of kernels]
    \label{prob:thm:kernel-dep}
    For all $\Xcal,\Ycal,\Zcal \in \SSS$ the product of Markov kernels
    \begin{align}
        \otimes:\; \PrPf(\Xcal)^{\Ycal\times\Zcal} \times \PrPf(\Ycal)^\Zcal &\longrightarrow
        \PrPf(\Xcal\times\Ycal)^\Zcal, &
        K(X|Y,Z) \otimes Q(Y|Z) &= P(X,Y|Z),
        \label{eq:kernel-dep}
    \end{align}
    given on points by
    $\lp K\otimes Q\rp(z) = \Mbb\Big(\lp y\mapsto K(y,z)\otimes\delta_y\rp_* Q(z)\Big)$, is
    sample-smooth. No hypothesis is placed on any of the three spaces.
    \begin{proof}
        We uncurry fully, so that $K$ and $Q$ vary with the rest. The map
        \begin{align}
            \PrPf(\Xcal)^{\Ycal\times\Zcal} \times \Zcal \times \Ycal
            &\longrightarrow \PrPf(\Xcal\times\Ycal), &
            (K,z,y) &\longmapsto \rho_{\Xcal,\Ycal}\lp\ev\lp K,(y,z)\rp,\ y\rp,
        \end{align}
        is sample-smooth, being $\rho_{\Xcal,\Ycal}$ of \Cref{prob:cor:strength} precomposed with
        the evaluation of \Cref{sss:thm:cartesian-closed} and the evident sample-smooth
        rearrangement. Currying it in $\Ycal$
        gives a sample-smooth map into $\PrPf(\Xcal\times\Ycal)^{\Ycal}$; pairing that with
        $\ev:\,\PrPf(\Ycal)^\Zcal\times\Zcal\to\PrPf(\Ycal)$ and applying the enriched
        push-forward of \Cref{prob:lem:pf-enriched} --- which is what is needed, the map along
        which one pushes forward varying with $(K,z)$ --- followed by $\Mbb$, sample-smooth by
        \Cref{prob:thm:mult}, gives a sample-smooth map
        $\PrPf(\Xcal)^{\Ycal\times\Zcal}\times\PrPf(\Ycal)^\Zcal\times\Zcal
        \to\PrPf(\Xcal\times\Ycal)$. Currying in $\Zcal$ is \Cref{sss:thm:cartesian-closed}
        again, and on points the composite is the displayed formula.
    \end{proof}
\end{Thm}

\begin{Rem}[What this costs elsewhere]
    \label{prob:rem:dep-free}
\Cref{eq:kernel-dep} is not an independent statement: it is \emph{equivalent} to sample-smoothness
    of $\Mbb$, as one sees by taking $\Ycal := \PrPf(\Xcal)$, $\Zcal := \one$ and $K := \id$, whose
    $\Xcal$-marginal is $\int\nu\,Q(d\nu) = \Mbb_\Xcal(Q)$. So the dependent product of kernels is
    the multiplication in another guise. Here both are theorems and the equivalence is a curiosity;
    in a setting where the multiplication is problematic it says that the dependent product is
    exactly as problematic.
\end{Rem}

The multiplication integrates \emph{measures} and is sample-smooth, \Cref{prob:thm:mult}.
Integrating a \emph{test function} is not, and this is worth recording, because it is easy to expect
the opposite.

\begin{Thm}[Expectations are not sample-smooth]
    \label{prob:thm:expectation}
    The map
    \begin{align}
        \PrPf(\R) &\longrightarrow \R, & \mu &\longmapsto \int \cos \, d\mu,
        \label{eq:expectation-not-smooth}
    \end{align}
    is \emph{not} sample-smooth, although $\cos$ is sample-smooth, bounded, and has all derivatives
    bounded. Consequently the two-variable evaluation
    $\lp h,\mu\rp \mapsto \int h\,d\mu$ is not sample-smooth on
    $\Ccal_b(\R)\times\PrPf(\R)$, where $\Ccal_b(\Xcal) \ins \R^\Xcal$ is the subobject of
    bounded sample-smooth functions --- on which, unlike on all of $\R^\Xcal$, the integral is
    everywhere defined.
    \begin{proof}
        Let $C(\sigma) := \tan\lp\pi(\sigma_0-\tfrac12)\rp$ for $\sigma_0 \in (0,1)$ and
    $C(\sigma) := 0$ for $\sigma_0 \in \lC0,1\rC$, where $\sigma_0$ is the first coordinate of
    $\sigma \in [0,1]^\N$. Then $C$ is Borel, so $C \in \R^\Omega$, and $C_*U$ is the standard
    Cauchy distribution, the exceptional set being $U$-null. Put $Y(t,\sigma) := t\,C(\Theta_2\sigma)$, which lies in $\R^{\Omega_1}$, being
    smooth in $t$ and Borel in $\sigma$. Then $\Law Y(t,\omega)$ is the law $\mu_t$ of $t\,C$ for
    every $\omega$, so $t\mapsto\mu_t$ is an admissible curve of $\PrPf(\R)$ by
    \Cref{prob:prp:curves}. But
    \begin{align}
        \int \cos \, d\mu_t &= \E\lB\cos(tC)\rB = e^{-\lI t\rI},
    \end{align}
    the characteristic function of the Cauchy distribution, which is not differentiable at $t = 0$.
        The second claim follows by precomposing with the sample-smooth map
        $\mu\mapsto(\cos,\mu)$.
    \end{proof}
\end{Thm}

\begin{Rem}[What that costs, and what it does not]
    \label{prob:rem:expectation}
    So $\PrPf$ is a monad whose objects carry smooth families along which expectations of perfectly
    ordinary functions fail to be smooth. This is not an artefact of \Cref{prob:def:PrPf}: the curve
    in the proof of \Cref{prob:thm:expectation} is the law of a smoothly varying random variable, so
    any notion of smooth family that contains the reparametrisable ones contains it. What it says is
    that a \emph{second}, expectation-based structure is genuinely different from this one, not a
    refinement of it; see \Cref{disc:sec:open}. It also says that the loss function of a variational
    program is in general not interpreted by the structure that interprets the program, and
    \Cref{sec:ml} is where that gap is bridged, by an analytic hypothesis rather than a categorical
    one.
\end{Rem}

\subsection{The Kleisli Category is a Markov Category}
\label{prob:sec:markov}

\begin{Def}[The category of differentiable simulators]
    \label{prob:def:kleisli}
    $\Kleisli(\PrPf)$ has the objects of $\SSS$ and $\Kleisli(\PrPf)(\Xcal,\Ycal) :=
    \SSS\lp\Xcal,\PrPf(\Ycal)\rp$, with composition
    $L \bullet K := \Mbb_\Zcal\circ\PrPf(L)\circ K$ and identities $\delta$. We call it the category
    of \emph{differentiable simulators}: by \Cref{prob:thm:representation} every morphism is, plot
    by plot, a smoothly parametrised sampler, and by \Cref{prob:cor:sampler-bijection} the morphisms
    out of $\R^m$ are exactly the samplers, taken up to equality of law at each parameter value.
\end{Def}

\begin{Thm}[Markov structure]
    \label{prob:thm:markov}
    $\Kleisli(\PrPf)$ is a symmetric monoidal category with $\Xcal\otimes\Ycal :=
    \Xcal\times\Ycal$ and unit $\one$, in which every object carries the commutative comonoid
    \begin{align}
        \lp\mathrm{copy}_\Xcal,\ \mathrm{del}_\Xcal\rp &:= \lp \delta\circ\Delta_\Xcal,\
        \delta\circ !_\Xcal \rp,
    \end{align}
    compatibly with the monoidal structure, and in which $\one$ is terminal --- equivalently,
    $\mathrm{del}$ is natural. That is, $\Kleisli(\PrPf)$ is a Markov category in the sense of
    \cite{Fri20}, for every object of $\SSS$ and with no further hypothesis. It is not cartesian:
    $\mathrm{copy}$ is not natural, since for a non-Dirac $\mu \in \PrPf(\R)$ one has
    $\Delta_*\mu \ne \mu\otimes\mu$. It \emph{is} natural along every morphism in the image of
    the functor $\delta_*:\,\SSS\to\Kleisli(\PrPf)$, so every such morphism is deterministic in
    the sense of \cite{Fri20}; whether the converse holds for a given $\Ycal$ depends on
    $\Bcal_\Ycal$ being countably separating, and we do not pursue it.
    \begin{proof}
        By \Cref{prob:thm:monad} the monad is strong, commutative and affine on the cartesian
        monoidal category $\SSS$; the Kleisli category of such a monad is a Markov category, see
        \cite{Fri20}. Concretely, the monoidal product of $K:\,\Xcal\to\PrPf(\Ycal)$ and
        $L:\,\Zcal\to\PrPf(\Wcal)$ is $K\boxtimes L := \otimes\circ(K\times L)$, which is
        sample-smooth by \Cref{prob:thm:otimes} and functorial by commutativity; the comonoid
        laws are the identities that $\Delta$ and $!$ satisfy in $\SSS$, transported along
        $\delta_*$; and terminality of $\one$, i.e.\ naturality of $\mathrm{del}$, is affineness.
    \end{proof}
\end{Thm}

\begin{Rem}[What the theorem does not claim]
    \label{prob:rem:conditionals}
    \Cref{prob:thm:markov} gives a Markov category and nothing beyond it. The further structure one
    may ask of a Markov category is not addressed here and is left to a sequel. The reason for
    stopping is worth stating, because it is the distinction that runs through the whole paper: the
    present construction removes the \emph{selection} problems, those of choosing a representative
    measurably, and the multiplication was one of them; it does nothing about problems of
    \emph{inverting} a push-forward, and no choice of plots can.
\end{Rem}

\subsection{Every Kleisli Morphism is a Sampler}
\label{prob:sec:representation}

\begin{Thm}[Representation]
    \label{prob:thm:representation}
    Let $\Xcal,\Ycal \in \SSS$ and let $K \in \SSS\lp\Xcal,\PrPf(\Ycal)\rp$ be a Kleisli morphism.
    \begin{enumerate}
        \item For every $n$ and every $p \in \Xcal^{\Omega_n}$ there is $Y \in \Ycal^{\Omega_n}$ with
            \begin{align}
                K\lp p(t,\omega)\rp &= \lp \omega'\mapsto Y\lp t,\Theta^{-1}(\omega,\omega')\rp\rp_*U
                \qquad\text{for all } (t,\omega).
            \end{align}
        \item In particular, for $\Xcal := \R^m$ every sample-smooth $K:\,\R^m\to\PrPf(\Ycal)$ is of
            the form $K(t) = \lp Y(t,-)\rp_*U$ with $Y \in \Ycal^{\Omega_m}$; that is, $K$ is a
            \emph{reparametrisation}, jointly smooth in the parameter and measurable in the seed.
        \item Representations compose: if $K$ is represented by $Y$ on the plot $p$ and
            $L:\,\Ycal\to\PrPf(\Wcal)$ is any Kleisli morphism, then $L\bullet K$ is represented on
            $p$ by some $W \in \Wcal^{\Omega_n}$.
    \end{enumerate}
    \begin{proof}
        (1) $K\circ p \in \PrPf(\Ycal)^{\Omega_n} = \lC\Law Y \st Y \in \Ycal^{\Omega_n}\rC$ by
        \Cref{eq:PrPf-plots}. (2) Apply (1) to $p := \pr_{\R^m} \in \lp\R^m\rp^{\Omega_m}$ and evaluate at a fixed
        $\omega_0$, using \Cref{prob:prp:curves}. (3) $L\bullet K$ is a Kleisli morphism, so (1)
        applies to it.
    \end{proof}
\end{Thm}

\begin{Prp}[Samplers are Kleisli morphisms]
    \label{prob:prp:sampler-to-kernel}
    Let $\Wcal,\Ycal \in \SSS$ and let $F \in \SSS\lp\Wcal\times\flat\Omega,\ \Ycal\rp$ be a
    \emph{sampler}. Then
    \begin{align}
        \widetilde F:\; \Wcal \longrightarrow \PrPf(\Ycal), \qquad
        \widetilde F(w) &:= \lp F(w,-)\rp_*U,
    \end{align}
    is a Kleisli morphism, and $\widetilde F$ depends on $F$ only through the laws
    $\lp F(w,-)\rp_*U$.
    \begin{proof}
        Let $q \in \Wcal^{\Omega_n}$ and put
        \begin{align}
            Y(t,\sigma) &:= F\lp q(t,\Theta_1\sigma),\ \Theta_2\sigma\rp .
        \end{align}
        The pair $(t,\sigma)\mapsto\lp q(t,\Theta_1\sigma),\Theta_2\sigma\rp$ is a plot
        of $\Wcal\times\flat\Omega$ on $\Omega_n$: its first component is $q\circ\Theta_1^{(n)}$,
        a plot by \Cref{site:not:morphisms}, and its second is $\Theta_2\circ\pi_n$, a plot of
        $\flat\Omega$ by \Cref{sss:eg:flat-sharp} since $\Theta_2 \in \Omega^\Omega$. As $F$ is
        sample-smooth, $Y \in \Ycal^{\Omega_n}$. Substituting
        $\sigma := \Theta^{-1}(\omega,\omega')$ gives
        $Y\lp t,\Theta^{-1}(\omega,\omega')\rp = F\lp q(t,\omega),\omega'\rp$, so
        $\Law Y(t,\omega) = \lp F(q(t,\omega),-)\rp_*U = \widetilde F\lp q(t,\omega)\rp$; that is,
        $\widetilde F \circ q = \Law Y$, which lies in $\PrPf(\Ycal)^{\Omega_n}$. The last clause is
        immediate from the formula.
    \end{proof}
\end{Prp}

\begin{Cor}[Kleisli morphisms out of $\R^m$ are exactly samplers modulo law]
    \label{prob:cor:sampler-bijection}
    $\Omega_m \cong \R^m\times\flat\Omega$ in $\SSS$, so that a sampler
    $\R^m\times\flat\Omega\to\Ycal$ is the same thing as an element of $\Ycal^{\Omega_m}$; and the
    law map $Y \mapsto \lp\theta\mapsto\lp Y(\theta,-)\rp_*U\rp$ induces a bijection
    \begin{align}
        \Ycal^{\Omega_m}\big/\!\!\sim \;\;\bij\;\; \Kleisli(\PrPf)\lp\R^m,\Ycal\rp, &&
        Y \sim Y' \;:\Longleftrightarrow\; \lp Y(\theta,-)\rp_*U = \lp Y'(\theta,-)\rp_*U
        \ \ \forall\theta .
    \end{align}
    \begin{proof}
        For the isomorphism, both objects have underlying set $\R^m\times\Omega$ and, by
        \Cref{sss:prp:prod-coprod}, \Cref{sss:eg:euclidean,sss:eg:flat-sharp},
        \begin{align}
            \lp\R^m\times\flat\Omega\rp^{\Omega_n} &= \Cmix(\Omega_n,\R^m) \times
            \lC \Phi\circ\pi_n \st \Phi \in \Omega^\Omega\rC = \Site(\Omega_n,\Omega_m)
            = \Omega_m^{\Omega_n},
        \end{align}
        the last equality by \Cref{sss:lem:plots-are-homs}. The law map is well defined by
        \Cref{prob:prp:sampler-to-kernel}, surjective by \Cref{prob:thm:representation}(2), and its
        fibres are the $\sim$-classes by construction.
    \end{proof}
\end{Cor}

This is the precise form of the slogan attached to \Cref{prob:def:kleisli}: a differentiable
simulator \emph{is} a smoothly parametrised sampler, and two samplers name the same simulator
exactly when they agree in law.

\begin{Rem}[The reparametrisation trick as a definition]
    \label{prob:rem:reparam}
    \Cref{prob:thm:representation} is the statement that underlies the reparametrisation gradient of
    the machine-learning literature: a smooth family of distributions is the law of a smoothly
    varying random variable, so that --- under the integrability hypothesis (D) of
    \Cref{ml:thm:pathwise}, which \Cref{prob:thm:expectation} shows cannot be dropped ---
    $\nabla_\theta \E_{\mu_\theta}[h]$ may be computed by differentiating inside the
    expectation. In a setting where a smooth family of measures is
    defined by a condition rather than by a representation, this is a genuine theorem to be proved,
    and not an easy one. Here it holds by construction and is closed under Kleisli composition, part
    (3) --- which is the property a semantics actually needs, since a program is a composite. The
    honest accounting is that nothing has been proved about representability: we have chosen a
    category in which the smooth families are the represented ones, and \Cref{prob:rem:comparison}
    says what that choice excludes.
\end{Rem}

\begin{Rem}[Three levels, and a witness that they differ]
    \label{prob:rem:three-levels}
    Three properties are easily run together, and it is worth keeping them apart.
    \begin{enumerate}
        \item[(L1)] \emph{Denoting.} The program is a morphism $\Xcal\to\PrPf(\Ycal)$: it names a
            distribution.
        \item[(L2)] \emph{Sampling.} That distribution is the law of $f(x,\omega)$ for some
            \emph{measurable} $f$.
        \item[(L3)] \emph{Sampling smoothly.} Such an $f$ may be found which is in addition smooth
            in $x$, i.e.\ which lies in $\Ycal^{\Omega_m}$.
    \end{enumerate}
    Level (L2) is classical and nearly free: on a standard Borel space the quantile transform
    produces such an $f$. Level (L3) is \Cref{prob:thm:representation}, and it is what the pathwise
    gradient of \Cref{ml:thm:pathwise} consumes. The two are not the same, and the quantile
    transform is already the illustration: it is a step function for a discrete family, so it
    witnesses (L2) while saying nothing about (L3).

    Here is the failure in the present setting. Let $\Scal$ be the two-point quasi-universal space
    whose random variables are all measurable maps $\Omega\to\lC0,1\rC$, and consider the Bernoulli
    family $\theta\mapsto\mathrm{Ber}(\theta)$ with values in $\PrPf\lp\flat\Scal\rp$. Level (L2)
    holds, since $f(\theta,\omega) := \I\lB\omega_0\le\theta\rB$ is jointly measurable, $\omega_0$
    being the first coordinate, and $\mathrm{Ber}(\theta) = \lp f(\theta,-)\rp_*U$. Level (L3)
    fails: every $Y \in \lp\flat\Scal\rp^{\Omega_1}$ is independent of $\theta$ by
    \Cref{sss:eg:flat-sharp}, so $\lp Y(\theta,-)\rp_*U$ is constant in $\theta$, whereas
    $\mathrm{Ber}(\theta)$ is not. Equivalently, by \Cref{prob:thm:functor-unit}(3) the object
    $\PrPf\lp\flat\Scal\rp = \flat\PrPf(\Scal)$ is rigid and admits no non-constant curve at all.

    Two readings, both worth having. First, (L3) is a property of the family \emph{together with
    the structure on the target}, not of the family alone: the same Bernoulli family does satisfy
    (L3) into the loose object $\sharp\Scal$, by $Y(\theta,\sigma) := \I\lB(\Theta_2\sigma)_0 \le
    \theta\rB$, every substitution of which is a measurable indicator. Second, nothing is gained
    that way, because $\SSS\lp\sharp\Scal,\R\rp$ consists of the constants alone --- the map
    $(t,\omega)\mapsto\I\lB t>0\rB$ is a plot of $\sharp\Scal$, and a morphism to $\R$ would carry
    it to a step function, which is not a plot of $\R$ --- so no expectation downstream could be
    differentiated. This is why the standard remedy for a discrete latent is a genuinely smooth
    relaxation and not a coarser structure, \Cref{ml:sec:nonsmooth}.
\end{Rem}

\subsection{What a Presentation by Curves Would Give}
\label{prob:sec:comparison}

\begin{Prp}[Plots of $\PrPf(\Xcal)$ are smooth and measurable]
    \label{prob:prp:necessary}
    Every $p \in \PrPf(\Xcal)^{\Omega_n}$ satisfies
    \begin{enumerate}
        \item $p(-,\omega) \in \PrPf(\Xcal)^{\R^n}$ for every $\omega \in \Omega$, and
        \item $p\circ h \in \PrPf(\Xcal)^\Omega$ for every $h \in \Site(\Omega_0,\Omega_n)$;
    \end{enumerate}
    that is, $\PrPf(\Xcal)^{\Omega_n} \ins \widehat{\PrPf(\Xcal)}^{\Omega_n}$ in the notation of
    \Cref{eq:maximal-extension}.
    \begin{proof}
        (2) is (P2). For (1), $p = \Law Y$ and
        $p(t,\omega) = \lp\tilde Y(t,-)\rp_*U$ with
        $\tilde Y := Y\circ\lp\pr_{\R^n},\Theta^{-1}(\omega,-)\rp \in \Xcal^{\Omega_n}$, so
        \Cref{prob:prp:curves} applies.
    \end{proof}
\end{Prp}

\begin{Rem}[Where the difficulty went]
    \label{prob:rem:comparison}
    The converse of \Cref{prob:prp:necessary} asks: given a family $\Ncal$ of measures which is a
    smooth family for each fixed seed and an admissible random measure for each fixed substitution,
    produce a single $Y \in \Xcal^{\Omega_n}$ representing it. That is a measurable selection
    problem --- one must choose a representative of $\Ncal(t,\omega)$ measurably in $\omega$ and
    smoothly in $t$ at once. Selections of Jankov--von Neumann type \cite{Kec95} return universally
    measurable functions, and universal measurability is exactly what ($\Omega$4) makes admissible;
    but they give no control in the $t$-direction, which is the whole difficulty, so they do not by
    themselves settle anything here. Whether the selection can be made we do not know, and nothing
    above depends on the answer.

    That is the point of the arrangement. Had objects been built from a pair
    $\lp\Xcal^\Omega,\Xcal^{\R}\rp$, the mixed family would have been \emph{forced} to be
    $\widehat{\PrPf(\Xcal)}^{\Omega_1}$ while the monad needs $\PrPf(\Xcal)^{\Omega_1}$, so the
    multiplication would exist only where the two agree --- over a subcategory, and only relatively.
    Here the mixed family is primitive and is defined to be the image. The difficulty is therefore
    not dissolved but relocated, out of the existence of the monad and into the question of whether
    the inclusion of \Cref{prob:prp:necessary} is an equality; and that question is now inert.

    Two consequences of the choice should be stated plainly. First, by
    \Cref{sss:rem:curves-do-not-suffice} an object of $\SSS$ is not presented by its curves and its
    random variables: the mixed family is separate data, and the price of \Cref{sss:rem:no-boman} is
    paid at exactly this point. Second, wherever the inclusion is strict
    a Kleisli morphism must come with a representation, so there are fewer of them than a
    curve-and-random-variable presentation would admit; \Cref{prob:thm:representation}(3) says that
    this is nevertheless a well-behaved class, closed under composition.
\end{Rem}

\section{An Application: Gradients of Expectations}
\label{sec:ml}

The applied reason for wanting a probability monad on a category of smooth spaces is that one
differentiates expectations: variational inference, generative modelling and policy-gradient methods
all reduce to estimating $\nabla_\theta \int h \, d\mu_\theta$ for a parametrised family of
distributions, \cite{KW14,RMW14,MRFM20}. This section records what \Cref{sec:prob} gives for that
problem. It is short by design: the categorical work is done, and what remains is one classical
analytic hypothesis and a remark on data that is not smooth.

\subsection{The Reparametrisation Gradient}
\label{ml:sec:reparam}

A parametrised family of distributions on $\Xcal$ is a Kleisli morphism $\R^q \to \PrPf(\Xcal)$,
i.e.\ a sample-smooth map $\theta \mapsto \mu_\theta$. By \Cref{prob:thm:representation}(2) every
such family is \emph{already} a reparametrisation: there is $Y \in \Xcal^{\Omega_q}$, smooth in
$\theta$ and admissible in the seed, with $\mu_\theta = \lp Y(\theta,-)\rp_*U$. No hypothesis and no
construction is involved; this is what it means to be a morphism into $\PrPf(\Xcal)$.

What is not free is exchanging the derivative with the integral. That is analysis, and by
\Cref{prob:thm:expectation} it genuinely requires a hypothesis.

\begin{Thm}[Pathwise gradient]
    \label{ml:thm:pathwise}
    Let $Y \in \lp\R^d\rp^{\Omega_q}$, let $\mu_\theta := \lp Y(\theta,-)\rp_*U$, and let
    $h \in \SSS\lp\R^d,\R\rp = C^\infty\lp\R^d,\R\rp$ be bounded with $\nabla h$ bounded. Assume
    \begin{enumerate}
        \item[(D)] for every compact $K \ins \R^q$ there is a $U$-integrable
            $B_K:\,\Omega\to\lB0,\infty\rB$ such that
            $\sup_{\theta\in K}\lV \partial_\theta Y(\theta,\omega)\rV \le B_K(\omega)$
            holds for all $\omega \in \Omega$.
    \end{enumerate}
    Then $\theta \mapsto \int h\,d\mu_\theta$ is continuously differentiable on $\R^q$, with
    \begin{align}
        \nabla_\theta \int h \, d\mu_\theta &= \int_\Omega \partial_\theta Y(\theta,\omega)^\top\,
        \nabla h\lp Y(\theta,\omega)\rp \; U(d\omega).
        \label{eq:pathwise}
    \end{align}
    \begin{proof}
        By the change of variables formula $F(\theta) := \int h\,d\mu_\theta = \int_\Omega
        h\lp Y(\theta,\omega)\rp\,U(d\omega)$, the integrand being bounded and, by (M2) of
        \Cref{site:def:site} together with continuity of $h$, measurable; so $F$ is well defined.
        Put
        \begin{align}
            G_i(\theta,\omega) &:= \nabla h\lp Y(\theta,\omega)\rp^\top
            \partial_{\theta_i} Y(\theta,\omega) .
        \end{align}
        Each $G_i$ is measurable in $\omega$, since $\partial_\theta Y$ is a pointwise limit of the
        difference quotients $\omega\mapsto k\lp Y(\theta+e_i/k,\omega)-Y(\theta,\omega)\rp$, each
        admissible by (M2), and $\nabla h$ is continuous; it is continuous in $\theta$ by (M1); and
        on a compact $K$ it is bounded by $\sup_{\R^d}\lV\nabla h\rV \cdot B_K$, which is
        $U$-integrable by (D). In particular the right-hand side of \Cref{eq:pathwise} exists.

        Fix $\theta_0$ and let $K$ be a closed ball around it, so that $K$ is convex. For
        $\theta_0+se_i \in K$ with $s \ne 0$ the map $\varphi(s) := h\lp Y(\theta_0+se_i,\omega)\rp$
        is $C^1$ by (M1) and the chain rule, with $\varphi'(s) = G_i(\theta_0+se_i,\omega)$, so the
        mean value theorem applied along the segment, which lies in $K$, gives
        \begin{align}
            \frac{\lI \varphi(s) - \varphi(0)\rI}{\lI s \rI} &\le
            \sup\nolimits_{\R^d}\lV\nabla h\rV \cdot B_K(\omega),
        \end{align}
        a bound independent of $s$ and $U$-integrable. Dominated convergence along any sequence
        $s\to0$ therefore gives $\partial_{\theta_i}F(\theta_0) = \int G_i(\theta_0,\omega)\,
        U(d\omega)$, which is \Cref{eq:pathwise} componentwise. The same dominating function, with
        dominated convergence applied to $\theta\mapsto G_i(\theta,\omega)$, shows that
        $\theta\mapsto\int G_i(\theta,\omega)\,U(d\omega)$ is continuous on $K$. So every partial
        derivative of $F$ exists and is continuous, and $F$ is $C^1$.
    \end{proof}
\end{Thm}

\begin{Rem}[The hypothesis is not decoration]
    \label{ml:rem:necessity}
    Condition (D) cannot be dropped. The proof of \Cref{prob:thm:expectation} exhibits, in the present
    normalisation, $Y(t,\sigma) = t\,C(\sigma)$
    with $C$ Cauchy and $h = \cos$ --- bounded, with all derivatives bounded --- for which
    $\int h\,d\mu_t = e^{-\lI t\rI}$ is not differentiable at $0$. There
    $\partial_t Y(t,\omega) = C(\omega)$ is not $U$-integrable, which is exactly the failure of (D).
    So the split is clean: the \emph{representation} of a family as a reparametrisation is
    categorical and free, \Cref{prob:thm:representation}; the \emph{estimator} is analysis and costs
    an integrability hypothesis.
\end{Rem}

\begin{Rem}[Why the categorical differential does not apply]
    \label{ml:rem:not-a-morphism}
    \Cref{ml:thm:pathwise} concludes that $F(\theta) = \int h\,d\mu_\theta$ is $C^1$. Nothing in
    it makes $F$ a morphism of $\SSS$: a morphism $\R^q\to\R$ is a $C^\infty$ map by
    \Cref{sss:thm:euclidean}, whereas without hypothesis (D) the composite
    $\theta\mapsto\mu_\theta\mapsto\int h\,d\mu_\theta$ need not even be $C^1$,
    \Cref{prob:thm:expectation}, and this for $h$ bounded with all derivatives bounded. Whether (D)
    alone forces more than $C^1$ we do not address. So in general the tangent functor and
    differentials of \Cref{sss:sec:tangent} do not apply to $F$.

    What \emph{is} a morphism, and what \Cref{sss:sec:tangent} does differentiate, is the sampler:
    $Y \in \lp\R^d\rp^{\Omega_q} = \SSS\lp\Omega_q,\R^d\rp$ is a plot,
    $\theta\mapsto Y(\theta,\omega)$ is a smooth family for each seed, and by
    \Cref{tan:thm:euclidean} its differential is the classical $\partial_\theta Y$.
    \Cref{eq:pathwise} is the
    statement that the gradient of the objective is the $U$-average of the differential of the
    sampler; the categorical content is on the right-hand side, the analytic hypothesis is what
    licenses the equality. This is the precise sense in which the reparametrisation gradient is
    ``differentiating the sampler rather than the objective'', and it is why an automatic
    differentiator applied to such a program is on solid ground while the objective it reports is
    not, by itself, a morphism.
\end{Rem}

\begin{Eg}[Location--scale families and the evidence lower bound]
    \label{ml:eg:location-scale}
    Let $Z \in \lp\R^d\rp^\Omega$ with $\E\lV Z\rV < \infty$, and let
    $m \in C^\infty(\R^q,\R^d)$ and $S \in C^\infty(\R^q,\R^{d\times d})$. Then
    $Y(\theta,\omega) := m(\theta) + S(\theta)\,Z(\omega)$ lies in $\lp\R^d\rp^{\Omega_q}$ and
    satisfies (D), since $\partial_\theta Y$ is bounded on compacta by
    $c_K\lp 1 + \lV Z(\omega)\rV\rp$. Taking $Z$ standard normal, this covers the Gaussian
    family $\mu_\theta = \Nrm\lp m(\theta),\,S(\theta)S(\theta)^\top\rp$ and hence the standard
    reparametrisation of \cite{KW14,RMW14}. Families with no closed-form reparametrisation, such as
    the Gamma, are handled by implicit differentiation of the distribution function \cite{FMM18};
    that is a statement about how to \emph{compute} $\partial_\theta Y$, not about whether $Y$
    exists, which is \Cref{prob:thm:representation}.

    An objective such as the evidence lower bound has a parameter in the integrand as well,
    $\theta \mapsto \int h(\theta,x)\,\mu_\theta(dx)$. This is the same theorem applied to
    $\widetilde Y(\theta,\omega) := \lp\theta,\,Y(\theta,\omega)\rp \in
    \lp\R^{q+d}\rp^{\Omega_q}$, whose law is $\delta_\theta\otimes\mu_\theta$, and to $h$ read as a
    function on $\R^{q+d}$ --- so that $h$ must now be bounded with bounded gradient \emph{jointly}
    in $(\theta,x)$, a real restriction and not a formality, since the integrand of an evidence
    lower bound is typically unbounded in both arguments. Under it, \Cref{eq:pathwise} returns both
    terms,
    \begin{align}
        \nabla_\theta \int h(\theta,x)\,\mu_\theta(dx) &= \int_\Omega \Big[
        \nabla_\theta h\lp\theta,Y(\theta,\omega)\rp + \partial_\theta Y(\theta,\omega)^\top
        \nabla_x h\lp\theta,Y(\theta,\omega)\rp \Big]\, U(d\omega),
    \end{align}
    since $\partial_\theta\widetilde Y = \lp\id,\partial_\theta Y\rp$ satisfies (D) whenever
    $\partial_\theta Y$ does, with dominating function $\sqrt{q} + B_K$.
\end{Eg}

\begin{Rem}[Why a monad, and not just a reparametrisation]
    \label{ml:rem:composition}
    A probabilistic program is not one sampling step but a composite of them, and the reason to
    insist on \Cref{prob:thm:monad} rather than on a single representation theorem is
    \Cref{prob:thm:representation}(3): Kleisli composition of representable families is again
    representable, because a composite of morphisms is a morphism. Concretely, if
    $K:\,\R^q\to\PrPf(\R^d)$ draws a latent and $L:\,\R^d\to\PrPf(\R^e)$ draws an observation from
    it, then $L\bullet K$ is again of the form $\lp W(\theta,-)\rp_*U$ with
    $W \in \lp\R^e\rp^{\Omega_q}$, obtained by seed splitting; one may then apply
    \Cref{ml:thm:pathwise} to the composite. Neither the representation nor its stability under
    composition carries a hypothesis. Condition (D), on the other hand, has to be checked for the
    composite, and is not implied by (D) for the two factors.
    \Cref{ml:eg:composite} carries all of this out.
\end{Rem}

\begin{Eg}[A two-step model, end to end]
    \label{ml:eg:composite}
    Fix $m,s \in C^\infty(\R^q,\R)$ with $s > 0$, a link $g \in C^\infty(\R,\R)$ and $\tau > 0$, and
    let $Z \in \R^\Omega$ be standard normal --- say $Z(\omega) := \Fnorm^{-1}(\omega_0)$ for
    $\omega_0 \in (0,1)$ and $Z(\omega) := 0$ otherwise, where $\Fnorm$ is the standard normal
    distribution function and $\omega_0$ is the first coordinate of $\omega$. Consider the two
    stages
    \begin{align}
        K:\;\R^q \to \PrPf(\R), & \qquad K(\theta) := \Nrm\lp m(\theta),\,s(\theta)^2\rp, \\
        L:\;\R \to \PrPf(\R), & \qquad L(z) := \Nrm\lp g(z),\,\tau^2\rp,
    \end{align}
    a latent drawn from a parametrised Gaussian and an observation drawn given the latent.

    \emph{Each stage is a Kleisli morphism.} Put
    \begin{align}
        F_K(\theta,\omega) &:= m(\theta) + s(\theta)\,Z(\omega), &
        F_L(z,\omega) &:= g(z) + \tau\,Z(\omega).
    \end{align}
    Both are sample-smooth on $\R^q\times\flat\Omega$ resp.\ $\R\times\flat\Omega$, being smooth in
    the first argument and Borel in the second, and their laws are $K$ and $L$; so $K$ and $L$ are
    Kleisli morphisms by \Cref{prob:prp:sampler-to-kernel}. Nothing has been checked beyond
    smoothness of $m,s,g$ and measurability of $Z$.

    \emph{The composite, and its sampler.} By \Cref{prob:thm:representation}(3) the Kleisli
    composite $L \bullet K$ is again a sampler, and the proof of \Cref{prob:thm:mult} says which
    one: run the first stage on one half of the seed and the second on the other. Explicitly, put
    \begin{align}
        W(\theta,\omega) &:= F_L\Big( F_K\lp\theta,\Theta_1\omega\rp,\ \Theta_2\omega \Big)
        = g\Big( m(\theta) + s(\theta)\,Z\lp\Theta_1\omega\rp \Big)
        + \tau\,Z\lp\Theta_2\omega\rp .
        \label{eq:composite-sampler}
    \end{align}
    Then $W \in \R^{\Omega_q}$, being smooth in $\theta$ and Borel in $\omega$, and
    $\lp W(\theta,-)\rp_*U = \lp L\bullet K\rp(\theta)$. Indeed $Z\circ\Theta_1$ and
    $Z\circ\Theta_2$ are independent standard normals by \Cref{qus:lem:split-U}, so under $U$ the
    variable $z := m(\theta)+s(\theta)\,Z(\Theta_1\omega)$ has law $K(\theta)$ and, given it,
    $W(\theta,\omega)$ has law $\Nrm\lp g(z),\tau^2\rp = L(z)$; integrating out $z$ is
    $\Mbb$, and gives $\int L(z)\,K(\theta)(dz) = \lp L\bullet K\rp(\theta)$.

    \emph{The gradient.} Assume in addition that $g'$ is bounded, and let $h \in C^\infty(\R,\R)$ be
    bounded with $h'$ bounded. Then
    \begin{align}
        \partial_\theta W(\theta,\omega) &= g'\Big( m(\theta)+s(\theta)Z(\Theta_1\omega)\Big)
        \Big( \nabla m(\theta) + \nabla s(\theta)\,Z(\Theta_1\omega)\Big),
    \end{align}
    which on a compact $K \ins \R^q$ is bounded by
    $c_K \lV g'\rV_\infty \lp 1 + \lI Z(\Theta_1\omega)\rI\rp$, a $U$-integrable function since
    $\E\lI Z\rI < \infty$. So (D) holds and \Cref{ml:thm:pathwise} applies to the composite:
    \begin{align}
        \nabla_\theta \int h \, d\lp L\bullet K\rp(\theta) &= \int_\Omega
        h'\lp W(\theta,\omega)\rp\, \partial_\theta W(\theta,\omega)\; U(d\omega),
    \end{align}
    which is what an implementation computes when it draws the two seed halves once and
    differentiates the arithmetic of \Cref{eq:composite-sampler}.

    \emph{(D) is not inherited.} Take $p = 1$, $m(\theta) := \theta$, $s :\equiv 1$ and
    $g(z) := e^{z^2}$. Each stage satisfies (D): $\partial_\theta F_K \equiv 1$, and
    $\partial_z F_L = g'$ is bounded on compacta. The composite does not, since at $\theta = 0$
    \begin{align}
        \E\lI \partial_\theta W \rI &= \E\lI g'(Z)\rI = \frac{2}{\sqrt{2\pi}}
        \int_\R \lI z\rI\, e^{z^2/2}\,dz = \infty .
    \end{align}
    The representation composed; the integrability hypothesis did not. This is the division of
    labour of \Cref{ml:rem:necessity}, seen in a program.
\end{Eg}

\subsection{Data That Is Not Smooth}
\label{ml:sec:nonsmooth}

Programs also manipulate values one must not differentiate: discrete latents, comparison bits,
branch indices, labels. Here the framework is deliberately strict.

By \Cref{sss:thm:euclidean}, $\SSS\lp\R,\R\rp = C^\infty(\R,\R)$, so $x\mapsto\max(x,0)$ and
$x\mapsto\I\lB x>0\rB$ are simply not morphisms $\R\to\R$. Nothing is broken by this; the point
is that a map which is not differentiable does not become differentiable by being written down, and
the category says so.

The repair is to change the \emph{sample-smooth structure} --- that is, the object --- and not the
map.

\begin{Def}[The branched line]
    \label{ml:def:branched}
    Let $\R_{\mathrm{br}} := \lp-\infty,0\rp \sqcup \lB0,\infty\rp$ be the coproduct in
    $\SSS$ of the two subobjects of $\R$, so that its underlying set is again $\R$ and, by
    \Cref{sss:prp:prod-coprod},
    \begin{align}
        \R_{\mathrm{br}}^{\Omega_n} &= \lC p \in \R^{\Omega_n} \st p < 0 \text{ everywhere}
        \rC \;\cup\; \lC p \in \R^{\Omega_n} \st p \ge 0 \text{ everywhere} \rC .
    \end{align}
\end{Def}

\begin{Prp}[The rectifier on the branched line]
    \label{ml:prp:relu}
    The map $\max(-,0):\,\R_{\mathrm{br}}\to\R$ is sample-smooth, with derivative $0$ on the
    open summand and $1$ on the closed one. The identity $\R_{\mathrm{br}}\to\R$ is sample-smooth
    but not an isomorphism. Moreover no admissible curve of $\R_{\mathrm{br}}$ approaches the kink
    at non-zero speed from inside the closed summand: if $\gamma \in \R_{\mathrm{br}}^\R$ takes
    values in $\lB0,\infty\rp$ and $\gamma(t_0) = 0$, then $\gamma'(t_0) = 0$.
    \begin{proof}
        A plot of $\R_{\mathrm{br}}$ lands in one summand by \Cref{ml:def:branched}, and
        $\max(-,0)$ restricts on the two summands to the constant $0$ and to the identity, both
        smooth; so it carries plots to plots. The identity is sample-smooth because every plot of
        $\R_{\mathrm{br}}$ is a plot of $\R$, and its inverse is not, because a smooth curve
        crossing $0$ --- say $\gamma(t) = t$ --- is a plot of $\R$ and not of
        $\R_{\mathrm{br}}$. For the last claim, $\gamma$ attains its minimum at $t_0$ and is
        differentiable there.
    \end{proof}
\end{Prp}

So the rectifier is a morphism as soon as one names the object on which it is one; nothing else has
to change --- it composes, and \Cref{sec:prob} applies to it verbatim.

What one pays is visible and is the right price. Refining the source discards every curve
that crosses $0$ --- in particular all those along which $\max(-,0)$ fails to be differentiable, and
some others besides --- which is what an implementation does when it picks a value at the kink and
never looks across it. The last clause of \Cref{ml:prp:relu} says that the framework records the
absence of a two-sided derivative rather than inventing one. For a network with several such units
the summands must be refined to the linear regions, so the object depends on the program; that
bookkeeping is where the interest lies, and we do not pursue it here.

Two coarser repairs are also available, from \Cref{sec:modal}. One may keep the map and coarsen the
\emph{target}: by \Cref{modal:thm:triple} a map $\Xcal\to\sharp\Scal$ is sample-smooth as soon as it
is quasi-measurable, so $\max(-,0)$, and the indicator of any universally measurable set, are
morphisms $\R\to\sharp\Und\R$, and the type then records that the result is not available for
differentiation. Or one may type an input by $\flat\Scal$, which carries no non-constant plot in the
smooth direction, so that nothing downstream can differentiate with respect to it; by
\Cref{prob:thm:functor-unit}(3) the probability monad restricts to the rigid objects,
$\PrPf\circ\flat = \flat\circ\PrPf$, so discrete randomness is handled inside the same monad, just
without gradients.

A third repair, of a different kind, is to replace the family itself by a genuinely smooth one --- a
temperature-controlled relaxation, as in \cite{JGP17,MMT17} --- which is again a change of object,
this time to one on which \Cref{ml:thm:pathwise} applies.

The general shape is worth stating once. Smoothness here is a property of objects, not something one
asserts of a map; a map that is not differentiable becomes a morphism exactly when one names the
object on which it is. We do not develop this further, since none of \Cref{sec:prob} depends on it.

\section{Discussion}
\label{sec:discussion}

\subsection{Status of the Results}
\label{disc:sec:status}

The paper has two main results; everything else is either preparation for them or a
consequence of them.
\begin{center}
\begin{tabular}{@{}l p{0.70\textwidth}@{}}
    \Cref{prob:thm:monad} & $\PrPf$ is a strong commutative affine monad on \emph{all} of $\SSS$
    \\[3pt]
    \Cref{prob:thm:markov} & its Kleisli category is a Markov category, on \emph{all} of $\SSS$
\end{tabular}
\end{center}
The point of the arrangement is that the hypothesis column of \Cref{sec:prob} is empty,
\Cref{tab:hypotheses}.

\begin{table}[htbp]
\centering
\renewcommand{\arraystretch}{1.2}
\begin{tabular}{@{}p{0.42\textwidth} l l@{}}
    \toprule
    \emph{structure} & \emph{where} & \emph{hypothesis} \\
    \midrule
    functor and unit & \Cref{prob:thm:functor-unit} & none \\
    push-forward along a varying map & \Cref{prob:lem:pf-enriched} & none \\
    product of measures & \Cref{prob:thm:otimes} & none \\
    product of independent kernels & \Cref{prob:cor:kernel-indep} & none \\
    strength and costrength & \Cref{prob:cor:strength} & none \\
    multiplication & \Cref{prob:thm:mult} & none \\
    monad; strong, commutative, affine & \Cref{prob:thm:monad} & none \\
    product of dependent kernels & \Cref{prob:thm:kernel-dep} & none \\
    Markov category & \Cref{prob:thm:markov} & none \\
    representation by samplers & \Cref{prob:thm:representation} & none \\
    samplers modulo law & \Cref{prob:cor:sampler-bijection} & none \\
    \bottomrule
\end{tabular}
\caption{Every structural result of \Cref{sec:prob}, and what it assumes of the objects.}
\label{tab:hypotheses}
\end{table}
Every numbered statement outside \Cref{sec:qus} is proved here, and there are no conjectures;
\Cref{sec:qus} recalls \cite{For21} without proof, with a section pointer on each statement. The
substantive inputs from elsewhere are: the properties ($\Omega$1)--($\Omega$6) of the universal
Hilbert cube, the induced $\sigma$-algebras, the quasitopos structure of $\QUS$ and the
push-forward monad with its product of measures, all from \cite[\S\S2, 3, 5, 7]{For21}; the quasitopos theorem for
concrete sheaves on a concrete site, from \cite{BH11}, see also \cite{Dub79,MMS22}, used in
\Cref{sss:thm:quasitopos} as a cross-check, every clause --- local cartesian closedness included
--- being also proved directly in \Cref{sss:sec:constructions,sss:sec:cc}; the Carath\'eodory joint
measurability lemma, proved in \Cref{qus:lem:caratheodory}; and, for
\Cref{prob:thm:markov}, the fact that the Kleisli category of a commutative affine monad on a
cartesian category is a Markov category, from \cite{Fri20}.

It is worth recording what is \emph{not} claimed, in one place.
\begin{itemize}
    \item \emph{Expectation is not a morphism.} $\mu\mapsto\int h\,d\mu$ fails to be
        sample-smooth already for $h=\cos$ on the real line, \Cref{prob:thm:expectation}; so the
        objective of a variational program is not interpreted by the structure that interprets the
        program, and \Cref{ml:thm:pathwise} bridges the gap by an analytic hypothesis rather than a
        categorical one, \Cref{ml:rem:not-a-morphism}.
    \item \emph{Conditioning is absent.} \Cref{prob:thm:markov} gives a Markov category and no
        more; conditionals, almost-sure equality and positivity are not addressed,
        \Cref{prob:rem:conditionals}, and the reason is stated there: the construction removes
        problems of \emph{selecting} a representative, not problems of \emph{inverting} a
        push-forward.
    \item \emph{The tangent space is only a cone.} Outside Cartesian spaces and manifolds
        $T_x\Xcal$ need carry no linear structure, \Cref{tan:prp:cotangent}, and what
        $T\PrPf(\R^n)$ looks like we do not know.
    \item \emph{The inclusion of \Cref{prob:prp:necessary} is not claimed to be an equality.} By
        \Cref{sss:prp:reflection} that is the question whether $\PrPf$ lands in
        $\SSS_{\mathrm{pr}}$, and \Cref{prob:rem:comparison} explains that it is precisely the
        question the construction was designed not to depend on.
    \item \emph{There is no locality.} Plot families are not required to be local in the parameter
        or patchable in the seed, \Cref{site:rem:no-locality}; \Cref{disc:rem:locality} explains
        why locality in the parameter would cost \Cref{prob:thm:representation}. Seed patching is
        not discussed.
\end{itemize}

\subsection{What the Mixed Site Buys, and What it Costs}
\label{disc:sec:tradeoff}

\begin{Rem}[The ledger]
    \label{disc:rem:ledger}
    Gained: an unconditional probability monad, \Cref{prob:thm:monad}, with an unconditional Markov
    Kleisli category, \Cref{prob:thm:markov}, an unconditional dependent product of kernels,
    \Cref{prob:thm:kernel-dep}, and an unconditional representation theorem,
    \Cref{prob:thm:representation}; the compatibility axiom relating smoothness and measurability
    becomes a consequence, \Cref{sss:lem:Q4}; and Euclidean and manifold correctness become
    immediate, \Cref{sss:thm:euclidean,sss:thm:manifolds}, so that Boman's theorem
    disappears from the foundations.

    Lost: an object is no longer \emph{presented} by its curves and its random variables,
    \Cref{sss:rem:curves-do-not-suffice}, so the economy of a two-family definition is gone and
    every construction is indexed by an arity; and, wherever the inclusion of
    \Cref{prob:prp:necessary} is strict, there are fewer Kleisli morphisms than a two-family
    presentation would give, \Cref{prob:rem:comparison}. Unchanged: the measurable half, \Cref{prob:prp:underlying}, the
    smooth families, \Cref{prob:prp:curves}, and hence every concrete computation that either
    theory can express.
\end{Rem}

\begin{Rem}[On locality]
    \label{disc:rem:locality}
    We do not impose a sheaf condition, and \Cref{site:rem:no-locality} explains that this is a
    choice rather than an oversight. It is worth saying what each side of the choice is for.

    Locality --- test objects $V\times\Omega$ for open $V \ins \R^n$, and plots required to be local
    in the smooth direction --- buys \emph{gluing}: constructing a morphism from local pieces, and
    constructing an object from local data. That is what differential-geometric arguments on objects
    which are not manifolds need: de Rham theory, integration of forms, connections, bundles
    presented by transition functions. It is \emph{not} needed for tangent and cotangent spaces, for
    Jacobians, or for classical differential geometry on manifolds, which by
    \Cref{sss:thm:manifolds} may be quoted rather than redone.

    Against that, locality interacts badly with \Cref{prob:def:PrPf}. A family of measures that is
    locally in $t$ a push-forward need not be one globally, since local representatives agree only
    in law on overlaps; so imposing a sheaf condition would force
    $\PrPf(\Xcal)^{\Omega_n}$ to be replaced by its sheafification, a possibly larger class whose new
    members are exactly the families with no global representative. The multiplication survives
    that change --- run the argument of \Cref{prob:thm:mult} on each member of the cover --- but
    \Cref{prob:thm:representation}, which is the point of the construction, does not. Locality is
    therefore not a neutral addition here.
\end{Rem}

\begin{Rem}[A single smooth test object]
    \label{disc:rem:one-object}
    The family $\lC\Omega_n\rC_{n\ge0}$ is the product closure of $\lC\Omega_0,\Omega_1\rC$, and one may ask
    whether it can be replaced by a single self-absorbing object $E\times\Omega$ with
    $E\times E\cong E$, so that the site becomes a one-object category and
    \Cref{sec:sss} becomes a rerun of \Cref{sec:qus} with $\Omega$ replaced by
    $E\times\Omega$. Both natural candidates work formally --- the Fr\'echet space $\R^\N$
    absorbs products by interleaving, exactly as $\Omega$ does, and the colimit $\R^{(\N)}$ is
    conservative over the present site --- but each carries a cost. For $\R^\N$ the plot family is
    strictly richer than the tower, since smoothness on a Fr\'echet space is not detected on
    finite-dimensional slices, and one inherits the apparatus of convenient calculus \cite{KM97}.
    For $\R^{(\N)}$ the reduction to the tower requires plots to factor locally through finite
    stages, hence a locality condition, which \Cref{disc:rem:locality} argues against. We keep the
    tower; every proof above is uniform in $n$, so it costs nothing.
\end{Rem}

\subsection{Relation to Other Work}
\label{disc:sec:related}

The measurable half is \cite{For21}, itself a development of the quasi-Borel spaces of
\cite{HKSY17}; see \cite{VKS19,Ste21} for the domain-theoretic and semantic development, and
\cite{KPS25} for work in progress on infinite products in quasi-Borel spaces. The smooth half is in the
tradition of diffeological and Fr\"olicher spaces, \cite{Sou80,IZ13,BKW25}, and the general
machinery of concrete sheaves on a concrete site is \cite{BH11,Dub79}, in the form we use it
\cite{MMS22}.

Both halves are visible inside $\SSS$ as comparison functors, and it is worth saying which way they
run. Towards the smooth side, \Cref{sss:prp:diffeology} gives $\mathrm{Dfg}:\,\SSS\to\Diff$,
faithful, correct on manifolds, and not full: a diffeological map is tested only on smooth plots,
whereas a sample-smooth map must in addition carry random variables to random variables. There is no
canonical functor back, \Cref{sss:rem:diffeology}. Towards the measurable side, $\Und$ of
\Cref{modal:lem:functors} is the corresponding functor $\SSS\to\QUS$, and it is better behaved:
it is a genuine adjoint, sitting in the middle of the string of \Cref{modal:thm:string}, with
$\flat$ and $\sharp$ both splitting it. Since $\QUS$ is itself the quasi-Borel construction with
the universal Hilbert cube as sample space, \cite{For21,HKSY17}, a reader arriving from the
quasi-Borel tradition reaches $\SSS$ through $\flat$ or $\sharp$ and gets back what they started
with. Neither comparison is used below; they are recorded because a reader arriving from either
tradition will want to know that the objects they already have are objects here.

The design decision of this paper is visible against that background. \cite{MMS22} combines two
concrete sites by a \emph{sum}, so that objects carry two independent plot families related only by
constants; that is the shape a presentation by curves and random variables would have, and
\Cref{site:rem:mixed} identifies it as the source of the obstruction. We take a \emph{product} of
test objects instead. We are not aware
of this instance elsewhere, although nothing in \cite{BH11,Dub79,MMS22} forbids it.

On the semantics side, \cite{HSV20} uses diffeological spaces for the correctness of automatic
differentiation and does not treat probability; \cite{HLMS23} handles non-smoothness by
piecewise-analytic plot domains and obtains total monads of samplers and of integrators, while
\cite{LRY23} attacks the same problem by a static smoothness analysis; \cite{LHSM23} is the
corresponding account of gradient estimators. Relative monads in the sense of
\cite{ACU15}, in the Kleisli-triple formulation of \cite{Man76}, are what one is left with in place
of \Cref{prob:thm:monad} if the mixed families are computed rather than defined. Markov categories
are \cite{Fri20}. Cohesion is \cite{Law07}.

\subsection{Open Questions}
\label{disc:sec:open}

\begin{enumerate}
    \item \textbf{Is the inclusion of \Cref{prob:prp:necessary} an equality?} Equivalently: does
        every family of measures that is smooth in the parameter and admissible in the seed admit a
        joint representative? By \Cref{prob:rem:comparison} this is a measurable selection problem
        with an additional smoothness requirement in the parameter, which the classical selection
        theorems \cite{Kec95} do not address; we know of no example where the inclusion is strict,
        \Cref{sss:rem:curves-do-not-suffice}. A positive answer would say that $\PrPf(\Xcal)$ is
        presented by its $n$-parameter families and its random variables after all.
    \item \textbf{Conditionals.} \Cref{prob:thm:markov} stops at a Markov category. When does
        $\Kleisli(\PrPf)$ have conditionals in the sense of \cite{Fri20} --- that is, when can a
        joint morphism $\Xcal\to\PrPf(\Ycal\times\Zcal)$ be factored through a morphism
        $\Xcal\times\Ycal\to\PrPf(\Zcal)$? \Cref{prob:rem:conditionals} says why this is a
        different kind of problem from the ones solved here: it asks one to \emph{invert} a
        push-forward, and no choice of plot families can make that free. Disintegration in $\QUS$
        is available, \cite[\S7]{For21}, so the question is what has to be assumed of $\Xcal$ and
        $\Ycal$ for the disintegrating kernel to be sample-smooth in the parameter, which is where
        the smoothness of the site re-enters.
    \item \textbf{Algebras.} $\PrPf$ is now an honest endofunctor, so the Eilenberg--Moore category
        is defined. What are the $\PrPf$-algebras? In the measurable half they should be a form of
        convex space; the smooth directions ought to add a differentiable structure to the convex
        combination, and we do not know what that amounts to.
    \item \textbf{Tangent structure.} \Cref{sss:sec:tangent} gives correct derivatives on
        Cartesian spaces and manifolds, and \cite{CW16} builds the corresponding theory for
        diffeological spaces. Does $\SSS$ carry a tangent structure in the axiomatic sense of
        \cite{Ros84,CC14}, at least on a subcategory on which $T$ is not already the identity?
    \item \textbf{The tangent space of a space of measures.} What is $T\PrPf(\R^n)$? By
        \Cref{prob:prp:curves} a tangent vector at $\mu$ is a reparametrisable curve through $\mu$
        up to first order, so the question is which ``directions'' at $\mu$ are witnessed by a
        smooth family of random variables. A comparison with the Wasserstein tangent space
        suggests itself and we have not made it.
    \item \textbf{Is $\SSS_{\mathrm{pr}}$ a proper subcategory?} \Cref{sss:prp:reflection}
        exhibits the presented objects as a reflective subcategory, and we know of no object outside
        it. A negative answer to question~1 would produce one; conversely
        $\SSS_{\mathrm{pr}} = \SSS$ would settle question~1 positively. An arbitrary object
        outside $\SSS_{\mathrm{pr}}$ would already be the first concrete separation between the
        present category and a presentation by $n$-parameter families and random variables, without
        deciding question~1 either way.
    \item \textbf{Comparison with $\omega$PAP.} \cite{HLMS23} obtains total monads on a category of
        plots with piecewise-analytic domains, by way of a sampler monad and an integrator monad.
        Their sampler monad is weighted, partial and threads the residual seed; its unweighted
        total fragment plays the role of the present $\PrPf$ \emph{before} quotienting by equality
        in law. Is there a comparison functor relating the two, and does that quotient match
        \Cref{prob:def:PrPf} under it?
    \item \textbf{Expectation-based structures.} Replacing \Cref{eq:PrPf-plots} by a condition on
        $t\mapsto\int h\,d\mu_t$ for test functions $h$ gives a different structure, for which
        the multiplication is easy and the product of measures is not. By
        \Cref{prob:thm:expectation} it does not contain the present one; whether it is contained in
        it we do not know. What does it look like over the present site, and is the failure of
        monoidality genuine?
\end{enumerate}

\medskip

A last word on what the paper is for. The obstruction it removes is small and completely specific:
one plot family, at one kind of test object, is a definition rather than a derivation. Everything
downstream of it --- the monad laws, the Markov structure, the representation of every Kleisli
morphism by a sampler --- follows from that one choice by seed splitting, without a hypothesis
anywhere, while the quasitopos structure and the modalities come from the site and do not depend on
it at all. Whether the choice is a real one, or whether the two
families would have forced it all along, is \Cref{sss:prp:reflection} and question~1 above, and we
do not know. What can be said is that the construction does not depend on the answer, which is the
property one wants of a foundation: a semantics for differentiable probabilistic programs should not
have to wait on a selection theorem.

\phantomsection
\addcontentsline{toc}{section}{Acknowledgements}
\section*{Acknowledgements}

This note was prepared with the assistance of Claude Opus 5, a large language model developed by Anthropic. Based on the author's notes, ideas and input, the model was used to re-draft and restructure the exposition, to prepare the typescript, to search for and cross-check references, to check and complete arguments, and to provide feedback, which the author used to refine further inputs. All references, statements and proofs have been verified by the author, who takes full
responsibility for the content, including any remaining errors.


\begin{thebibliography}{HLMS23}

\bibitem[ACU15]{ACU15}
Thorsten Altenkirch, James Chapman, and Tarmo Uustalu, \emph{{Monads need not
  be endofunctors}}, Logical Methods in Computer Science \textbf{11} (2015),
  no.~1, 3:1--3:40. \doi{10.2168/LMCS-11(1:3)2015} \eprintlink{1412.7148}

\bibitem[BH11]{BH11}
John~C. Baez and Alexander~E. Hoffnung, \emph{{Convenient categories of smooth
  spaces}}, Transactions of the American Mathematical Society \textbf{363}
  (2011), no.~11, 5789--5825. \doi{10.1090/S0002-9947-2011-05107-X}
  \eprintlink{0807.1704}

\bibitem[BKW25]{BKW25}
Augustin Batubenge, Yael Karshon, and Jordan Watts, \emph{{Diffeological,
  Fr\"olicher, and differential spaces}}, Rocky Mountain Journal of Mathematics
  \textbf{55} (2025), no.~3, 637--672. \doi{10.1216/rmj.2025.55.637}
  \eprintlink{1712.04576}

\bibitem[Bom67]{Bom67}
Jan Boman, \emph{{Differentiability of a function and of its compositions with
  functions of one variable}}, Mathematica Scandinavica \textbf{20} (1967),
  249--268. \doi{10.7146/math.scand.a-10835}

\bibitem[CC14]{CC14}
J.~Robin~B. Cockett and Geoff~S.~H. Cruttwell, \emph{{Differential structure,
  tangent structure, and SDG}}, Applied Categorical Structures \textbf{22}
  (2014), no.~2, 331--417. \doi{10.1007/s10485-013-9312-0}

\bibitem[CW16]{CW16}
J.~Daniel Christensen and Enxin Wu, \emph{{Tangent spaces and tangent bundles
  for diffeological spaces}}, Cahiers de Topologie et G\'eom\'etrie
  Diff\'erentielle Cat\'egoriques \textbf{57} (2016), no.~1, 3--50.
  \eprintlink{1411.5425}

\bibitem[Dub79]{Dub79}
Eduardo~J. Dubuc, \emph{{Concrete quasitopoi}}, Applications of Sheaves,
  Lecture Notes in Mathematics, vol.~753, Springer, Berlin, 1979, pp.~239--254.
  \doi{10.1007/BFb0061821}

\bibitem[FMM18]{FMM18}
Mikhail Figurnov, Shakir Mohamed, and Andriy Mnih, \emph{{Implicit
  reparameterization gradients}}, Advances in Neural Information Processing
  Systems 31 (NeurIPS), 2018. \eprintlink{1805.08498}

\bibitem[For21]{For21}
Patrick Forr\'e, \emph{{Quasi-measurable spaces: a convenient foundation of
  probability theory}}, preprint, 2021; revised version, 2026. All section
  pointers in the present paper refer to \texttt{v2}.
  \eprintlink{2109.11631}

\bibitem[Fri20]{Fri20}
Tobias Fritz, \emph{{A synthetic approach to Markov kernels, conditional
  independence and theorems on sufficient statistics}}, Advances in Mathematics
  \textbf{370} (2020), 107239. \doi{10.1016/j.aim.2020.107239}
  \eprintlink{1908.07021}

\bibitem[HKSY17]{HKSY17}
Chris Heunen, Ohad Kammar, Sam Staton, and Hongseok Yang, \emph{{A convenient
  category for higher-order probability theory}}, 32nd Annual ACM/IEEE
  Symposium on Logic in Computer Science (LICS 2017), IEEE, 2017, pp.~1--12.
  \doi{10.1109/LICS.2017.8005137} \eprintlink{1701.02547}

\bibitem[HLMS23]{HLMS23}
Mathieu Huot, Alexander~K. Lew, Vikash~K. Mansinghka, and Sam Staton,
  \emph{{$\omega$PAP spaces: reasoning denotationally about higher-order,
  recursive probabilistic and differentiable programs}}, 38th Annual ACM/IEEE
  Symposium on Logic in Computer Science (LICS 2023), IEEE, 2023, pp.~1--14.
  \doi{10.1109/LICS56636.2023.10175739} \eprintlink{2302.10636}

\bibitem[HSV20]{HSV20}
Mathieu Huot, Sam Staton, and Matthijs V\'ak\'ar, \emph{{Correctness of
  automatic differentiation via diffeologies and categorical gluing}},
  Foundations of Software Science and Computation Structures (FoSSaCS 2020),
  Lecture Notes in Computer Science, vol.~12077, Springer, 2020, pp.~319--338.
  \doi{10.1007/978-3-030-45231-5\_17} \eprintlink{2001.02209}

\bibitem[IZ13]{IZ13}
Patrick Iglesias-Zemmour, \emph{{Diffeology}}, Mathematical Surveys and
  Monographs, vol.~185, American Mathematical Society, Providence, RI, 2013.
  \doi{10.1090/surv/185}

\bibitem[JGP17]{JGP17}
Eric Jang, Shixiang Gu, and Ben Poole, \emph{{Categorical reparameterization
  with Gumbel-softmax}}, 5th International Conference on Learning
  Representations (ICLR), 2017. \eprintlink{1611.01144}

\bibitem[Kec95]{Kec95}
Alexander~S. Kechris, \emph{{Classical Descriptive Set Theory}}, Graduate Texts
  in Mathematics, vol.~156, Springer, New York, 1995.
  \doi{10.1007/978-1-4612-4190-4}

\bibitem[KM97]{KM97}
Andreas Kriegl and Peter~W. Michor, \emph{{The Convenient Setting of Global
  Analysis}}, Mathematical Surveys and Monographs, vol.~53, American
  Mathematical Society, Providence, RI, 1997. \doi{10.1090/surv/053}

\bibitem[KPS25]{KPS25}
Ohad Kammar, Seo~Jin Park, and Sam Staton, \emph{{Semantics for the coinductive
  structures in stochastic processes: infinite product measures in quasi-Borel
  spaces}}, Early Ideas extended abstract, CALCO 2025, Glasgow, 2025;
  \href{https://www.coalg.org/calco-mfps-2025/calco/accepted/}{\textsf{coalg.org}}.

\bibitem[KW14]{KW14}
Diederik~P. Kingma and Max Welling, \emph{{Auto-encoding variational Bayes}},
  2nd International Conference on Learning Representations (ICLR), 2014.
  \eprintlink{1312.6114}

\bibitem[Law07]{Law07}
F.~William Lawvere, \emph{{Axiomatic cohesion}}, Theory and Applications of
  Categories \textbf{19} (2007), no.~3, 41--49.
  \href{http://www.tac.mta.ca/tac/volumes/19/3/19-03abs.html}{\textsf{tac:19-03}}

\bibitem[LHSM23]{LHSM23}
Alexander~K. Lew, Mathieu Huot, Sam Staton, and Vikash~K. Mansinghka,
  \emph{{ADEV: sound automatic differentiation of expected values of
  probabilistic programs}}, Proceedings of the ACM on Programming Languages
  \textbf{7} (2023), no.~POPL, 121--153. \doi{10.1145/3571198}
  \eprintlink{2212.06386}

\bibitem[LRY23]{LRY23}
Wonyeol Lee, Xavier Rival, and Hongseok Yang, \emph{{Smoothness analysis for
  probabilistic programs with application to optimised variational inference}},
  Proceedings of the ACM on Programming Languages \textbf{7} (2023), no.~POPL,
  335--366. \doi{10.1145/3571205} \eprintlink{2208.10530}

\bibitem[Man76]{Man76}
Ernest~G. Manes, \emph{{Algebraic Theories}}, Graduate Texts in Mathematics,
  vol.~26, Springer, New York, 1976. \doi{10.1007/978-1-4612-9860-1}

\bibitem[MMS22]{MMS22}
Cristina Matache, Sean Moss, and Sam Staton, \emph{{Concrete categories and
  higher-order recursion: with applications including probability,
  differentiability, and full abstraction}}, 37th Annual ACM/IEEE Symposium on
  Logic in Computer Science (LICS 2022), ACM, 2022, pp.~57:1--57:14.
  \doi{10.1145/3531130.3533370} \eprintlink{2205.15917}

\bibitem[MMT17]{MMT17}
Chris~J. Maddison, Andriy Mnih, and Yee~Whye Teh, \emph{{The Concrete
  distribution: a continuous relaxation of discrete random variables}}, 5th
  International Conference on Learning Representations (ICLR), 2017.
  \eprintlink{1611.00712}

\bibitem[MRFM20]{MRFM20}
Shakir Mohamed, Mihaela Rosca, Michael Figurnov, and Andriy Mnih, \emph{{Monte
  Carlo gradient estimation in machine learning}}, Journal of Machine Learning
  Research \textbf{21} (2020), no.~132, 1--62. \eprintlink{1906.10652}

\bibitem[RMW14]{RMW14}
Danilo~J. Rezende, Shakir Mohamed, and Daan Wierstra, \emph{{Stochastic
  backpropagation and approximate inference in deep generative models}},
  Proceedings of the 31st International Conference on Machine Learning (ICML),
  Proceedings of Machine Learning Research, vol.~32, PMLR, 2014,
  pp.~1278--1286. \eprintlink{1401.4082}

\bibitem[Ros84]{Ros84}
Ji\v{r}\'i Rosick\'y, \emph{{Abstract tangent functors}}, Diagrammes
  \textbf{12} (1984), JR1--JR11.

\bibitem[Sou80]{Sou80}
Jean-Marie Souriau, \emph{{Groupes diff\'erentiels}}, Differential Geometrical
  Methods in Mathematical Physics, Lecture Notes in Mathematics, vol.~836,
  Springer, Berlin, 1980, pp.~91--128. \doi{10.1007/BFb0089728}

\bibitem[Ste21]{Ste21}
Dario Stein, \emph{{Structural foundations for probabilistic programming
  languages}}, D.Phil.\ thesis, University of Oxford, 2021.

\bibitem[Ste67]{Ste67}
Norman~E. Steenrod, \emph{{A convenient category of topological spaces}}, Michigan
  Mathematical Journal \textbf{14} (1967), no.~2, 133--152.
  \doi{10.1307/mmj/1028999711}

\bibitem[VKS19]{VKS19}
Matthijs V\'ak\'ar, Ohad Kammar, and Sam Staton, \emph{{A domain theory for
  statistical probabilistic programming}}, Proceedings of the ACM on
  Programming Languages \textbf{3} (2019), no.~POPL, 36:1--36:29.
  \doi{10.1145/3290349} \eprintlink{1811.04196}

\end{thebibliography}
\end{document}